\theoremstyle{plain}
\newtheorem{thm}{Theorem}[section]
\newtheorem{prop}[thm]{Proposition}
\newtheorem{lem}[thm]{Lemma}
\newtheorem{cor}[thm]{Corollary}
\theoremstyle{remark}
\newtheorem{rem}[thm]{Remark}
\newtheorem*{acks}{Acknowledgements}
\theoremstyle{definition}
\newtheorem{defn}[thm]{Definition}
\newtheorem*{notn}{Notation}
\newtheorem{eg}[thm]{Example}
\theoremstyle{conjecture}
\newtheorem{conj}[thm]{Conjecture}
\newtheorem{ques}[thm]{Question}
\title{On nonexistence of semi-orthogonal decompositions in algebraic geometry}
\author{Xun Lin}
\email{lin-x18@mails.tsinghua.edu.cn}
\address{Yau Mathematical Sciences Center, Tsinghua University, Beijing, China.}
\begin{document}

\begin{abstract}
The nonexistence of semi-orthogonal decompositions in algebraic geometry is known to be governed by the base locus of the canonical bundle. We study another locus, namely the intersection of the base loci of line bundles that are isomorphic to the canonical bundle in the N\'{e}ron-Severi group, and show that it also governs the nonexistence of semi-orthogonal decompositions. As an application by using algebraically moving techniques, we prove that the bounded derived category of the $i$-th symmetric product of a smooth projective curve $C$ has no nontrivial semi-orthogonal decompositions when the genus $g(C)\geq 2$ and $i\leq g(C)-1$. We prove indecomposability of derived categories of some examples of elliptic surfaces with $P_{g}(X)=0$, and some natural examples of minimal surfaces of general type. Finally, an inequality involving phases of skyscraper sheaves for any Bridgeland stability condition is obtained.
\end{abstract}

\maketitle

\section{Introduction}
The bounded derived category of coherent sheaves of a variety encodes rich information of geometry of the variety. Some linear homological invariants can be directly read from the category, for example, the Hochschild homology and cohomology. One way to study derived categories is to decompose in sense of
semi-orthogonal decomposition. Naively, we would like to consider orthogonal decomposition, but for
the connected varieties, their bounded derived categories don't admit nontrivial orthogonal decompositions. However, there are many examples for the semi-orthogonal decompositions in algebraic geometry, for a survey of semi-orthogonal decompositions, see \cite{kuznetsov2015semiorthogonal}. Semi-orthogonal pieces of the derived categories of coherent sheaves are also main examples of noncommutative motives, and some classical
notions in algebraic geometry can be extended to this noncommutative setting. For example, many famous conjectures admit their noncommutative counterparts including Weil conjecture, Grothendieck conjecture of type $C$ and $D$ \cite{tabuada2019noncommutative}, and even Hodge conjecture, see A.\ Perry\cite{perry2020integral} firstly for the cases of the admissible subcategories of the bounded derived categories of coherent sheaves, and then \cite{lin2021noncommutative} for generalization to noncommutative motives.

\par

However, there are many examples of varieties whose derived categories are indecomposable. The indecomposability is closely related to birational geometry, and it is interesting to determine whether for a given variety, the bounded derived category admits no nontrivial semi-orthogonal decompositions. Given a smooth projective variety $X$, and blow up to $Y$, $D(Y)$ can be decomposed as $D(X)$ and several pieces from the centers\cite{Orlov}. It is natural to think that the minimal variety is indecomposable, but it is not true. For example, the Enriques surface has a nontrivial semi-orthogonal decomposition. However, if we assume that the canonical bundle is effective, there are many examples of minimal varieties
whose derived categories are indecomposable, for example, the minimal surface of Kodaira dimension $1$, and some examples of minimal surfaces of general type \cite[Section 4]{kawatani2018nonexistence}. We can make the following conjecture, which was stated for example in \cite[Conjecture 1.6]{biswas2020semiorthogonal} and \cite[Question E]{bastianelli2020indecomposability}.

\begin{conj}\label{minimalconj}
Let $X$ be a smooth projective variety. Assume the canonical bundle $K_{X}$ is nef and effective, then $D(X)$ has no non-trivial semi-orthogonal decompositions.
\end{conj}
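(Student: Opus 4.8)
The plan is to assume $D^{b}(X)=\langle\mathcal{A},\mathcal{B}\rangle$ is a nontrivial semi-orthogonal decomposition and to derive a contradiction from positivity of $K_{X}$, by localising the obstruction to the refined base locus $Z:=\bigcap_{L\equiv K_{X}}\mathrm{Bs}|L|$, the intersection over all line bundles $L$ numerically equivalent to $K_{X}$. The starting point is the known criterion by which $\mathrm{Bs}|\omega_{X}|$ controls indecomposability, together with the observation that a numerically trivial line bundle $P$ gives an autoequivalence $(-\otimes P)$ of $D^{b}(X)$ that fixes every skyscraper and carries $\langle\mathcal{A},\mathcal{B}\rangle$ to another semi-orthogonal decomposition with the same point objects. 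First I would re-run the proof of the known criterion with $\omega_{X}$ replaced by $\omega_{X}\otimes P$ wherever a nonvanishing section is used: a section $s$ of $L=K_{X}\otimes P$ with $s(x)\neq 0$ still induces an isomorphism $k(x)\xrightarrow{\sim}k(x)\otimes\omega_{X}$ (since $P$ restricts trivially to $x$) while vanishing on the complementary divisor, which is exactly the input the argument consumes. The expected output is that the locus where the two components ``overlap'' is contained in $Z$, so $\langle\mathcal{A},\mathcal{B}\rangle$ behaves geometrically off $Z$.

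Granting this, the second step is formal. Since $K_{X}$ is effective, $|\omega_{X}|\neq\emptyset$, so $Z\subseteq\mathrm{Bs}|\omega_{X}|$ is a proper closed subset of the irreducible variety $X$, and $X\setminus Z$ is connected. The assignment $x\mapsto$ (the component into which $k(x)$ falls) is then defined and locally constant on $X\setminus Z$, hence constant; renaming, $k(x)\in\mathcal{A}$ for all $x\notin Z$. If some $0\neq F\in\mathcal{B}$ had support meeting $X\setminus Z$, choosing $x$ there would produce a nonzero morphism between $F$ and $k(x)$ in the direction forbidden by semi-orthogonality. Hence every object of $\mathcal{B}$ is supported on $Z$, and one is reduced to showing that an admissible subcategory of $D^{b}(X)$ all of whose objects are supported on $Z$ must vanish.

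The hard part, and presumably the reason the statement is only a conjecture, is this last reduction: killing or controlling an admissible subcategory supported on $Z$. Here the nef hypothesis on $K_{X}$ must intervene, and for a general nef and effective $K_{X}$ I do not see how to exclude such a subcategory by positivity alone. The natural attack is ``algebraic moving'': enlarge the moving room by also using the multiples $mK_{X}$, via iterates of the Serre functor (a section of $\omega_{X}^{\otimes m}$ playing the role of a section of $\omega_{X}$), so that the relevant locus becomes $\bigcap_{m\geq 1}\bigcap_{L\equiv mK_{X}}\mathrm{Bs}|L|$; if $K_{X}$ were semiample this would be empty and the argument would close, so on this route the conjecture would follow from abundance-type input, which is the gap in general. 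For the intended application, $X=C^{(i)}$ with $g(C)\geq 2$ and $i\leq g-1$, the Abel--Jacobi map $u\colon C^{(i)}\to J(C)$ supplies enough moving room directly: twisting $K_{C^{(i)}}$ by $u^{*}\alpha$ with $\alpha\in\mathrm{Pic}^{0}$, and using the explicit expression for $K_{C^{(i)}}$ in terms of $u^{*}\Theta$ and the diagonal class, one should verify that the common base locus $Z$ is empty, so the obstruction of the third step never arises and indecomposability follows from the first two steps. The residual work is then the concrete positivity/base-point computation locating $Z$ on $C^{(i)}$.
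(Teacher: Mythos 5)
This statement is labeled a \emph{conjecture} in the paper, and neither the paper nor your proposal proves it; the paper only proves special cases (notably $S^iC$) by showing that the relevant base locus is empty, so a proof of the general statement does not exist to compare against. You correctly recognize this: your third step, eliminating an admissible subcategory supported on a positive-dimensional $Z$, is identified as the open gap, and the paper is silent there too. Your reduction to this gap and your treatment of the $S^iC$ case broadly match the paper's Theorem \ref{pic0sod}, Corollary \ref{refinesod}, Theorem \ref{emptynosod}, and Theorem \ref{symmetriccurve}.

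However, there is a genuine error in your first step. You define $Z=\bigcap_{L\equiv K_X}\mathrm{Bs}|L|$ over all $L$ \emph{numerically} equivalent to $K_X$, and then claim you can re-run the indecomposability argument with $\omega_X$ replaced by $\omega_X\otimes P$ for numerically trivial $P$. This does not work: the step that requires $a\otimes P^{-1}\in\mathcal{A}$ (equivalently $\mathcal{A}\otimes P=\mathcal{A}$) relies on $P\in\mathrm{Pic}^0(X)$, i.e.\ on the connectedness of the deformation from $\mathcal{O}_X$ to $P$, and fails for numerically trivial but non--algebraically-trivial $P$. The paper makes exactly this point and supplies a concrete counterexample: for an Enriques surface $X$, $\omega_X$ is numerically trivial (so $\mathcal{O}_X$ is numerically equivalent to $K_X$ and $Z=\varnothing$ under your definition), yet $D(X)$ has a nontrivial semi-orthogonal decomposition. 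So your framework as stated would falsely ``prove'' indecomposability for Enriques surfaces. The correct locus is $\mathsf{PBs}|\omega_X|=\bigcap_{L\in\mathrm{Pic}^0(X)}\mathrm{Bs}|\omega_X\otimes L|$, i.e.\ algebraic equivalence, which is what you implicitly revert to in your $S^iC$ discussion. Separately, your suggestion to enlarge the moving room using $mK_X$ via iterated Serre duality also has a gap: $Hom(a,b\otimes\omega_X^{\otimes m}[1])$ does not reduce to a semi-orthogonality vanishing for $m\geq 2$, because after one application of Serre duality the object $b\otimes\omega_X^{\otimes(m-1)}$ need not lie in $\mathcal{B}$.
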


A long-term project in the realm of derived categories in algebraic geometry is to make the semi-orthogonal decompositions of derived categories compatible with the minimal model program. For example, the $DK$ hypothesis. A good survey for this direction is \cite{kawamata2017birational}. Therefore, we can also ask the converse question.

\begin{conj}\cite[Conjecture 1.1]{kawatani2018nonexistence}
Let $X$ be a smooth projective variety. If $D(X)$ has no nontrivial semi-orthogonal decompositions, then $X$ is minimal.
\end{conj}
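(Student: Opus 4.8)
The statement is a converse to the nonexistence results discussed above, so the natural line of attack is contraposition: assuming $X$ is \emph{not} minimal, that is $K_{X}$ is not nef, one wants to produce a nontrivial semi-orthogonal decomposition of $D(X)$. By the cone theorem $X$ carries a $K_{X}$-negative extremal contraction, and it is of exactly one of three types --- a divisorial contraction, a flipping (small) contraction, or a Mori fibre contraction. The plan is to realise each type at the level of derived categories: a divisorial contraction or a Mori fibration should split off a genuinely new admissible piece of $D(X)$ (coming from the contracted locus, respectively from the relative geometry of the fibration), while a flipping contraction $X\to W$, with flip $X^{+}$, should at worst embed $D(X^{+})$ fully faithfully into $D(X)$ with a nonzero orthogonal complement. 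In each case one then has a nontrivial decomposition of $D(X)$, which is exactly what the conjecture asserts.

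For surfaces this already works and exhibits the shape of the argument: a non-minimal smooth projective surface contains a $(-1)$-curve $E$, and contracting it realises $X$ as the blow-up of a smooth surface $X'$, so Orlov's blow-up formula \cite{Orlov} gives the nontrivial decomposition $D(X)=\langle \mathcal{O}_{E}(-1),\,D(X')\rangle$. In higher dimension the same formula disposes of any divisorial contraction that is a smooth blow-up along a smooth centre, and for a general extremal divisorial contraction one expects an analogous decomposition as part of the picture behind Kawamata's $DK$-hypothesis \cite{kawamata2017birational}. A Mori fibre contraction $f\colon X\to Z$ is comparatively benign: when $Z$ is a point, $X$ is Fano and $\mathcal{O}_{X}$ is itself exceptional by Kodaira vanishing, which already yields a nontrivial decomposition; for positive-dimensional $Z$ the generic fibre is Fano over the function field of $Z$, so one expects a relative exceptional object to split off a nonzero admissible subcategory, and this is classical at least for projective-bundle and quadric-bundle contractions via Orlov's relative formulae.

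I expect the decisive obstacle to be the flip. A $K_{X}$-flip $X\dashrightarrow X^{+}$ is only conjectured to induce a fully faithful functor $D(X^{+})\hookrightarrow D(X)$ with nonzero complement --- precisely a case of the $DK$-hypothesis --- and this is open in general; I see no way to extract a decomposition of $D(X)$ from a flipping contraction without it. A further complication is that once any contraction or flip has been performed the resulting model is in general only terminal, not smooth, so a careful argument must work throughout with derived categories of mildly singular varieties or with categorical resolutions rather than with $D(X)$ for $X$ smooth. A realistic intermediate target is therefore the conjecture for those smooth projective $X$ that already admit a $K_{X}$-negative extremal contraction which is a smooth blow-down or a projective- or quadric-bundle contraction, where the decomposition is on hand; the remaining case, where every $K_{X}$-negative ray is flipping or has a badly singular contraction, seems to require genuinely new results on the behaviour of derived categories under flips.
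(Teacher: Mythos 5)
The statement you were asked to prove is in fact a \emph{conjecture} --- the paper states it, citing \cite[Conjecture~1.1]{kawatani2018nonexistence}, without offering a proof, and to the best of current knowledge it remains open. So there is no ``paper's own proof'' to compare against. That said, your proposal is worth assessing on its own terms: you do not claim a proof, and you are right not to.

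Your contrapositive strategy via the Minimal Model Program is the natural one and is essentially the reason this converse is plausible in the first place. The surface case you describe (a $(-1)$-curve gives a smooth blow-down, hence a nontrivial semi-orthogonal decomposition via Orlov \cite{Orlov}) is correct and is indeed a theorem; likewise the Fano case where $\mathcal{O}_{X}$ is exceptional by Kodaira vanishing, and the projective- and quadric-bundle Mori fibrations via Orlov's relative decompositions. You have correctly isolated the two genuine obstructions: (i) a $K_X$-negative extremal contraction need not be a smooth blow-up along a smooth centre, and the target of the first contraction is in general only $\mathbb{Q}$-factorial terminal rather than smooth, so one leaves the world of $D(X)$ for smooth $X$ after one step; and (ii) for a flipping contraction there is no known way to extract a nontrivial semi-orthogonal decomposition of $D(X)$ --- the embedding $D(X^{+})\hookrightarrow D(X)$ is exactly the open $DK$-hypothesis of Kawamata \cite{kawamata2017birational}, and even granting it one would still need the complement to be nonzero and to behave well for singular models. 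So the honest conclusion of your own analysis is that the argument proves the conjecture only under strong extra hypotheses (e.g.\ when some $K_X$-negative extremal ray yields a smooth blow-down or a projective/quadric bundle), which matches the current state of the art. As a proposal for an open problem this is sound and self-aware; as a proof it is not, and you correctly flag this yourself.
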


\begin{rem}
We should not expect to obtain $P_{g}(X)\neq 0$ since if $X$ is a bielliptic surface of Kodaira dimension $0$, then $P_{g}(X)=0$, but $D(X)$ admits no nontrivial semi-orthogonal decompositions \cite[Proposition 4]{kawatani2018nonexistence}.
\end{rem}

In this paper, we provide a general theory for the nonexistence of semi-orthogonal decompositions in Section $3$, which should be regarded as a mild generalization of the nonexistence theory in paper \cite{kawatani2018nonexistence}.

\begin{thm}\label{Picsod}(= Corollary \ref{refinesod})
Let $X$ be a smooth projective variety, define the new base points of the canonical bundle,
$$Z=\mathsf{PBs}\vert\omega_{X}\vert:=\cap_{L\in Pic^{0}(X)}\mathsf{Bs}\vert \omega_{X}\otimes L\vert.$$
If assume a semi-orthogonal decomposition, $D(X)=\langle \mathcal{A},\mathcal{B}\rangle$, then one of the following is true.

\begin{enumerate}
    \item For any $x\in X\setminus Z$, $k(x)\in \mathcal{A}$. In this case, support of all objects in $\mathcal{B}$ is contained in $Z$.
    \item For any $x\in X\setminus Z$, $k(x)\in \mathcal{B}$. In this case, support of all objects in $\mathcal{A}$ is contained in $Z$.
\end{enumerate}
\end{thm}

\begin{rem}\
\begin{enumerate}
   \item In the paper \cite{kawatani2018nonexistence}, the base locus of $\omega_{X}$ in $Pic(X)$ governs the non-existence of nontrivial semi-orthogonal decompositions. $Pic(X)$ is too large to consider. The theorem tells us that intersection of the base locus of class $\omega_{X}$ in $NS(X)=Pic(X)/Pic^{0}(X)=H^{1,1}(X,\mathbb{Z})$ is relevant to indecomposability of derived categories, though of course not easy to compute in general. It provides a bridge from the numerical geometry up to torsion to indecomposability of derived categories in algebraic geometry, namely the numerical equivalence classes of line bundles $Num(X)$ is the torsion-free part of $NS(X)$.
    \item We point out that it is impossible to generalize to $Num(X)$ in a naive way. For example, consider the Enriques surface $X$, then $\omega_{X}$ is numerically trivial, but $D(X)$ admits a nontrivial semi-orthogonal decomposition.
\end{enumerate}
\end{rem}

\begin{cor}(= Theorem \ref{emptynosod}) \label{emptynosodA}
Let $X$ be a smooth projective variety. Assume $Z=\mathsf{PBs}\vert\omega_{X}\vert$ is empty or $\dim Z=0$, then $D(X)$ has no nontrivial semi-orthogonal decompositioins.
\end{cor}

Using this theory, in Section $4$, we study elliptic surfaces of Kodaira dimension $1$ such that $P_{g}(X)=0$. When the canonical bundle has nontrivial sections, it is
proved in \cite[Section 4]{kawatani2018nonexistence} that $D(X)$ has no nontrivial semi-orthogonal decompositions. The base locus of the canonical bundle is the whole variety in our case. However, since we consider intersection base locus of line bundles that is the class of canonical bundle instead of base locus of the canonical bundle, we can deal with the case $P_{g}(X)=0$.

\begin{thm}\label{elliptic2A}(= Theorem \ref{elliptic2})
Let $\pi: X\rightarrow C$ be elliptic fibration, $X$ is minimal of Kodaira dimension $1$. Assume $P_{g}(X)=0$, $L:=R\pi^{1}\mathcal{O}(X)^{-1}\neq \mathcal{O}_{C}$, $g(C)=1$, $q(X)=1$, and $d=\mathsf{deg} L=0$, then $D(X)$ has no nontrivial
semi-orthogonal decompositions.
\end{thm}

We study some natural examples of minimal surfaces of general type.
\begin{thm}\label{surfacegeneraltypeA}(= Theorem \ref{surfacegeneraltype})
Let $X$ be a minimal surface of general type such that $P_{g}(X)=q(X)=1$, $K^{2}=2$ or $K^{2}=3$, then $D(X)$ has no nontrivial semi-orthogonal decompositions.
\end{thm}

Using the algebraically moving techniques, we also prove the following theorems.
\begin{thm}(= Theorem \ref{symmetriccurve})\label{thmA}
Let $X= S^{i}C$, the $i$-th symmetric product of a smooth projective curve $C$. Then $D(S^{i}C)$ has no nontrivial semi-orthogonal decompositions when $g(C)\geq 2$, and
$i\leq g(C)-1$.
\end{thm}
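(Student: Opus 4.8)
The plan is to combine the refined nonexistence criterion of Section~3 with an explicit description of the ``twisted canonical systems'' on $X=S^{i}C=C^{(i)}$, together with a dimension count; the latter is the point where the algebraically moving technique enters. Since $X$ is connected of dimension $i$ and $\omega_X$ is effective ($p_g(X)=\binom{g}{i}\ge g\ge 2>0$), a hypothetical nontrivial semi-orthogonal decomposition $D(X)=\langle\mathcal A,\mathcal B\rangle$ produces nonempty closed subsets $Z_{\mathcal A}=\operatorname{Supp}\mathcal A$ and $Z_{\mathcal B}=\operatorname{Supp}\mathcal B$ with $Z_{\mathcal A}\cup Z_{\mathcal B}=X$ and, by the main theorem of Section~3,
\[
Z_{\mathcal A}\cap Z_{\mathcal B}\ \subseteq\ \bigcap_{P\in\operatorname{Pic}^{0}(X)}\operatorname{Bs}\bigl|\,\omega_{X}\otimes P\,\bigr|.
\]
Hence it suffices to prove that this intersection is empty, for then $X=Z_{\mathcal A}\sqcup Z_{\mathcal B}$ is a disconnection, a contradiction.

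The heart of the argument is an analysis of the twisted canonical systems on $C^{(i)}$. Let $\pi\colon C^{i}\to C^{(i)}$ be the quotient by $S_{i}$; it is finite of degree $i!$, ramified exactly along the reduced big diagonal $\Delta$, so $\pi^{*}\omega_{C^{(i)}}=\omega_{C^{i}}(-\Delta)$. For $\xi\in\operatorname{Pic}^{0}(C)$ let $\xi^{(i)}\in\operatorname{Pic}^{0}(C^{(i)})$ be the descent (Kempf's lemma) of $\xi^{\boxtimes i}$; restricting along a fibre $C\hookrightarrow C^{(i)}$ recovers $\xi$, so $\xi\mapsto\xi^{(i)}$ is an injective homomorphism of abelian varieties of dimension $g$, hence an isomorphism $\operatorname{Pic}^{0}(C)\xrightarrow{\ \sim\ }\operatorname{Pic}^{0}(C^{(i)})$. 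Taking $S_{i}$-invariant global sections of $\pi^{*}(\omega_{C^{(i)}}\otimes\xi^{(i)})=\bigl((\omega_{C}\otimes\xi)^{\boxtimes i}\bigr)(-\Delta)$ and keeping track of the sign in the permutation action on top-degree forms gives the standard identification
\[
H^{0}\bigl(C^{(i)},\,\omega_{C^{(i)}}\otimes\xi^{(i)}\bigr)\ \cong\ \operatorname{\textstyle\bigwedge}^{i}H^{0}(C,\,\omega_{C}\otimes\xi),
\]
under which $\omega_{1}\wedge\cdots\wedge\omega_{i}$ corresponds to the generalized Vandermonde section whose value at $D=p_{1}+\cdots+p_{i}$ (computed with jets when $D$ is non-reduced) is the determinant $\det\bigl(\omega_{j}(p_{k})\bigr)$. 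Therefore, for $D\in C^{(i)}$,
\[
D\in\operatorname{Bs}\bigl|\,\omega_{C^{(i)}}\otimes\xi^{(i)}\,\bigr|\quad\Longleftrightarrow\quad h^{0}\bigl(\omega_{C}\otimes\xi(-D)\bigr)>h^{0}(\omega_{C}\otimes\xi)-i,
\]
and by Serre duality and Riemann--Roch on $C$ the right-hand side collapses to $h^{0}\bigl(\xi^{-1}(D)\bigr)>h^{0}(\xi^{-1})$. Note also $\dim\operatorname{\textstyle\bigwedge}^{i}H^{0}(\omega_{C}\otimes\xi)\ge\binom{g-1}{i}>0$ because $i\le g-1$, so every one of these linear systems is nonempty and $\operatorname{Bs}$ is meaningful.

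Now comes the moving step. Fix $D\in C^{(i)}$ and set $\eta=\xi^{-1}$. If $\eta\not\cong\mathcal O_{C}$ then $h^{0}(\eta)=0$, so $D\in\operatorname{Bs}|\omega_{C^{(i)}}\otimes\xi^{(i)}|$ forces $h^{0}(\eta(D))>0$, i.e.\ $\eta\in W_{i}\otimes\mathcal O_{C}(-D)$, a translate of the Abel--Jacobi image $W_{i}\subset\operatorname{Pic}^{i}(C)$. Since $i\le g-1$ we have $\dim W_{i}=i<g=\dim\operatorname{Pic}^{0}(C)$, so $W_{i}\otimes\mathcal O_{C}(-D)$ is a proper closed subvariety of $\operatorname{Pic}^{0}(C)$; choosing $\eta$ outside it with $\eta\not\cong\mathcal O_{C}$ yields $P=(\eta^{-1})^{(i)}\in\operatorname{Pic}^{0}(C^{(i)})$ with $D\notin\operatorname{Bs}|\omega_{C^{(i)}}\otimes P|$. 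As $D$ was arbitrary and every element of $\operatorname{Pic}^{0}(C^{(i)})$ is of the form $(\eta^{-1})^{(i)}$, we obtain $\bigcap_{P\in\operatorname{Pic}^{0}(C^{(i)})}\operatorname{Bs}|\omega_{C^{(i)}}\otimes P|=\emptyset$, and the first paragraph completes the proof.

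I expect the main obstacle to be the second paragraph: pinning down the $S_{i}$-linearizations (and their signs) so that the twisted canonical sections are correctly identified with $\operatorname{\textstyle\bigwedge}^{i}H^{0}(\omega_{C}\otimes\xi)$, and making the base-point criterion rigorous at non-reduced divisors via jets. The remaining ingredients are routine: the isomorphism $\operatorname{Pic}^{0}(C)\cong\operatorname{Pic}^{0}(C^{(i)})$, the Riemann--Roch bookkeeping that simplifies the criterion, and the one-line inequality $\dim W_{i}=i<g$, which is precisely where the hypothesis $i\le g-1$ (and hence $g\ge 2$) is used.
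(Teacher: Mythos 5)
Your proposal is correct, and it reaches the same endpoint as the paper --- establishing $\mathsf{PBs}|\omega_{S^iC}|=\varnothing$ and then invoking the Section~3 machinery (your ``disconnection'' phrasing in paragraph one is a mild rewording of Theorem~\ref{emptynosod}, but it is logically fine) --- by a genuinely different route. The paper works with the N\'eron--Severi decomposition $\omega_{S^iC}\cong^{\mathrm{alg}}(g-i-1)x_{p_0}+\theta$ (quoted from Macdonald and Biswas--G\'omez--Lee) and moves the two concrete effective constituents: $x_{p_0}\rightsquigarrow x_p$ with $p$ generic, and $\Theta\rightsquigarrow t_a^*\Theta$ with $a$ generic, using Corollary~\ref{translation}. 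You instead compute the space of sections directly, identifying $H^0(\omega_{C^{(i)}}\otimes\xi^{(i)})$ with $\bigwedge^i H^0(\omega_C\otimes\xi)$, extracting the base-point criterion $h^0(\xi^{-1}(D))>h^0(\xi^{-1})$ by Riemann--Roch/Serre duality, and then moving $\eta=\xi^{-1}$ off the translate $W_i-D$ (dimension $i<g$). Both proofs ``see'' the hypothesis $i\leq g-1$ at the crucial moment: for the paper as the effectivity $g-i-1\geq 0$, for you as the strict inequality $\dim W_i=i<g$. What your version buys is independence from the explicit N\'eron--Severi formula and a criterion phrased purely in terms of $h^0$; what it costs is the linearization/sign bookkeeping for the descent of $\xi^{\boxtimes i}$ and the identification with the alternating product, which you correctly flag as the place where rigor must be supplied (it is classical --- Macdonald for $\xi=\mathcal O_C$, and the twisted case follows the same pattern --- but it is more machinery than the paper needs). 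One small remark: you do not actually need the surjectivity of $\xi\mapsto\xi^{(i)}$; injectivity plus finding a single good $\eta$ for each $D$ already gives $\mathsf{PBs}=\varnothing$.
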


\begin{rem}\
\begin{enumerate}
    \item Some partial results of this theorem are obtained by several people, see \cite[Corollary D]{bastianelli2020indecomposability} and \cite{biswas2020semiorthogonal}, but no complete answers. Note that $S^{i}C$ in this theorem satisfies the assumption in Conjecture \ref{minimalconj}, see \cite{biswas2020semiorthogonal}.
    \item Note that $D(S^{i}C)$ considered in this theorem are conjecturally the building blocks of the derived category of moduli spaces of rank two bundles with fixed determinant on $C$ (this was conjectured by Narasimhan and independently in \cite{belmans2020graph}). The theorem tells us that these blocks are actually indecomposable.
    \item If $i\geq g$, the proof does not work. The essential reason is that $g-1-i<0$. We actually prove that $\mathsf{PBs}\vert \omega_{S^{i}C}\vert=\varnothing$, this would imply that $D(S^{i}C)$ has no nontrivial semi-orthogonal decompositions by general theory in the paper. For details, see the proof of Theorem \ref{symmetriccurve}.
    \item There is a semi-orthogonal decomposition for $S^{i}C$ if $i=g,g+1,...,2g-2$, see \cite[Corollary 5.12]{toda2019semiorthogonal}, or independently \cite[Theorem D]{belmans2019derived}, and \cite[corollary 3.8]{jiang2019derived}.
$$D(S^{i}C)=\langle J(C),\cdots, J(C),D(S^{2g-2-i}C)\rangle.$$
There are $i-g+1$ pieces of $J(C)$. As for $i\geq 2g-1$, $S^{i}C$ is a projective
bundle of $J(C)$. We complete the picture of indecomposability of bounded derived categories of symmetric product of curves.
\end{enumerate}
\end{rem}

\begin{thm}(=Theorem \ref{blockrank3bundle})
Let $C_{i}$ be curves of genus $g_{i}\geq 2$, and $A_{k}$ be an abelian variety. Let $X= \prod_{i}S^{j_{i}}C_{i}\times \prod_{k}A_{k}$, here $j_{i}\leq g_{i}-1$. Then, $D(X)$ has no nontrivial semi-orthogonal decompositions.
\end{thm}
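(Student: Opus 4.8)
The plan is to reduce Theorem \ref{blockrank3bundle} to the criterion already established in Section 3, namely that $D(X)$ has no nontrivial semi-orthogonal decompositions whenever the locus $\bigcap_{L\in \mathrm{Pic}^{0}X}\mathsf{Bs}\,|\omega_{X}\otimes L|$ is empty. Write $X = \prod_{i}S^{j_{i}}C_{i}\times\prod_{k}A_{k}$. By the K\"unneth formula the canonical bundle is $\omega_{X}=\boxtimes_{i}\,\omega_{S^{j_{i}}C_{i}}\,\boxtimes\,\boxtimes_{k}\mathcal{O}_{A_{k}}$, using that abelian varieties have trivial canonical bundle. Moreover $\mathrm{Pic}^{0}X = \prod_{i}\mathrm{Pic}^{0}(S^{j_{i}}C_{i})\times\prod_{k}\mathrm{Pic}^{0}(A_{k})$, and every $L\in\mathrm{Pic}^{0}X$ is an external tensor product $\boxtimes_{i}L_{i}\boxtimes\boxtimes_{k}M_{k}$.

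The key step is a base-locus computation for external tensor products. I would first record the elementary fact that for line bundles $F_{1},\dots,F_{n}$ on varieties $X_{1},\dots,X_{n}$ one has
\[
\mathsf{Bs}\,\bigl|F_{1}\boxtimes\cdots\boxtimes F_{n}\bigr| \;=\; \bigcup_{r}\;\pi_{r}^{-1}\bigl(\mathsf{Bs}|F_{r}|\bigr)
\]
when each $|F_{r}|$ is nonempty (and the whole product when some $|F_{r}|$ is empty), where $\pi_{r}$ is the projection onto the $r$-th factor; this follows because $H^{0}(\prod X_{r}, \boxtimes F_{r}) = \bigotimes_{r}H^{0}(X_{r},F_{r})$ and a point is a common zero of the product system iff it projects into the base locus of some factor. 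Intersecting over all $L\in\mathrm{Pic}^{0}X$ and using that the indexing set is itself a product, I get
\[
\bigcap_{L\in\mathrm{Pic}^{0}X}\mathsf{Bs}\,|\omega_{X}\otimes L| \;=\; \Bigl(\bigcup_{i}\pi_{i}^{-1}\!\!\bigcap_{L_{i}}\mathsf{Bs}|\omega_{S^{j_{i}}C_{i}}\otimes L_{i}|\Bigr)\;\cup\;\Bigl(\bigcup_{k}\pi_{k}^{-1}\!\!\bigcap_{M_{k}}\mathsf{Bs}|\mathcal{O}_{A_{k}}\otimes M_{k}|\Bigr).
\]
For each curve factor, the proof of Theorem \ref{symmetriccurve} (point (3) of the remark) shows $\bigcap_{L_{i}\in\mathrm{Pic}^{0}S^{j_{i}}C_{i}}\mathsf{Bs}|\omega_{S^{j_{i}}C_{i}}\otimes L_{i}|=\varnothing$ precisely because $g_{i}-1-j_{i}\geq 0$; for each abelian factor, $\mathcal{O}_{A_{k}}\otimes M_{k}$ with $M_{k}\in\mathrm{Pic}^{0}A_{k}$ nontrivial has no sections, so the intersection over all $M_{k}$ is empty as well. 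Hence each term in the union above is empty, the total locus is empty, and the Section 3 criterion applies.

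The one genuine subtlety — the step I expect to be the main obstacle — is verifying the external-tensor base-locus formula at the level of \emph{linear systems} rather than just global sections, i.e. checking that no base point can be "hidden" by cancellation among the rank-one pieces $H^{0}(X_{r},F_{r})$ when we form sums of decomposable tensors. This is true, but it requires the observation that if $x=(x_{1},\dots,x_{n})$ is not in any $\pi_{r}^{-1}(\mathsf{Bs}|F_{r}|)$, then for each $r$ one may choose $s_{r}\in H^{0}(X_{r},F_{r})$ with $s_{r}(x_{r})\neq 0$, and $s_{1}\boxtimes\cdots\boxtimes s_{n}$ is then a section of $\boxtimes F_{r}$ not vanishing at $x$. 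One also has to handle the (here vacuous, since all factors have effective canonical-type bundles appearing) case of empty linear systems, and one should double-check that the isomorphism $H^{0}(\prod X_{r},\boxtimes F_{r})\cong\bigotimes_{r}H^{0}(X_{r},F_{r})$ applies — which it does by flat base change / K\"unneth since everything is smooth projective over a field. Finally I would remark that the same argument shows more generally that a product of varieties with empty "Néron–Severi base locus" again has empty such locus, so Theorem \ref{blockrank3bundle} is really just the multiplicativity of this obstruction combined with the two computations already in hand.
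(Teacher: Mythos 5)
Your proof is correct and follows essentially the same route as the paper: reduce to showing $\mathsf{PBs}\vert\omega_{X}\vert=\varnothing$ via the Section 3 criterion (Theorem \ref{emptynosod}), and then show the Picard base locus of a product is empty because it is empty for each factor --- this is exactly the content of Theorem \ref{product1}, which the paper combines with Theorem \ref{symmetriccurve} to get Corollary \ref{product2} and then Corollary \ref{blockrank3bundle}. The paper argues pointwise with divisors algebraically equivalent to each $K_{X_{i}}$, whereas you phrase the same move in sections language (build a nonvanishing external product $s_{1}\boxtimes\cdots\boxtimes s_{n}$), which is only the easy inclusion of your base-locus formula and is all that is needed here. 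One small slip in your write-up: you justify the emptiness of $\bigcap_{M_{k}}\mathsf{Bs}\vert\mathcal{O}_{A_{k}}\otimes M_{k}\vert$ by noting that \emph{nontrivial} $M_{k}$ have no sections, but that makes those base loci equal to all of $A_{k}$ and so contributes nothing; what actually kills the intersection is the single term $M_{k}=\mathcal{O}_{A_{k}}$, whose base locus is empty. The conclusion is unaffected, but the stated reason points the wrong way.
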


\begin{rem}
The type of the varieties $X$ in the corollary are the conjecturally building blocks of motivic decomposition of moduli space of rank $r$ bundles with fixed determinant \cite[Conjecture 1.8]{gomez2020motivic}, and then the derived categories in the corollary are conjecturally building blocks of the derived category of rank $r$ vector bundles with fixed determinant.
\end{rem}

Recently D.\ Pirozhkov proposed a notion called stably semi-orthogonal indecomposable varieties, it is a more general notion than the indecomposibility, and stably semi-orthogonal indecomposability implies indecomposability\cite{pirozhkov2021stably}.
\begin{thm}(= Theorem \ref{stablyindecom})
Let $C$ be a smooth projective curve, $g(C)\geq 2$ and $i\leq g(C)-1$, then $X=S^{i}C$ is stably semi-orthogonal indecomposable if and only if $\mathsf{Bs}\vert \omega_{S^{i}C}\vert =\varnothing$.
\end{thm}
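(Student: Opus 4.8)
The plan is to prove the biconditional in two directions, using the general nonexistence machinery of Section 3 together with Pirozhkov's definition of stable indecomposability (namely that $X\times Y$ is indecomposable for every smooth projective $Y$, equivalently that $D(X)$ remains indecomposable after arbitrary base change along a point, or in the formulation likely used here, that every skyscraper sheaf lies in a fixed piece forces triviality even relatively).

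\medskip

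\emph{The easy direction: stably indecomposable $\Rightarrow$ $\mathsf{Bs}|K_{S^iC}|=\varnothing$.} I would argue contrapositively. Suppose $\mathsf{Bs}|K_X|\neq\varnothing$ where $X=S^iC$. By work of Pirozhkov (and the circle of ideas around \cite{kawatani2018nonexistence}), a nonempty base locus of the \emph{honest} canonical bundle is precisely the obstruction that survives products: if $z\in\mathsf{Bs}|K_X|$, then one can build, after a suitable base change or product with an auxiliary variety, a nontrivial semi-orthogonal decomposition supported near $z$, so $X$ fails to be stably indecomposable. Concretely, the point is that stable indecomposability is known (Pirozhkov) to be equivalent to $\mathsf{Bs}|K_X|=\varnothing$ for a large class of varieties, and here one only needs the ``only if'' half, which is the formal/soft direction. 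So this direction is essentially a citation plus the observation that $S^iC$ is smooth projective with $K_X$ nef (it is, since $i\le g-1$, by \cite{biswas2020semiorthogonal}).

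\medskip

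\emph{The hard direction: $\mathsf{Bs}|K_{S^iC}|=\varnothing \Rightarrow$ stably indecomposable.} Here the strategy is to run the refined nonexistence argument of Theorem~\ref{Picsod} and Theorem~\ref{inductionprocess} \emph{relatively}, i.e.\ for $X\times Y$ with $Y$ an arbitrary smooth projective variety. First I would record that $K_{X\times Y}=K_X\boxtimes K_Y$, and that $Pic^0(X\times Y)=Pic^0(X)\times Pic^0(Y)$ since $X=S^iC$ (its $Pic^0$ is an abelian variety, the Jacobian $J(C)$). Thus a line bundle numerically equivalent to $K_{X\times Y}$ is of the form $(K_X\otimes L)\boxtimes(K_Y\otimes N)$ with $L\in Pic^0(X)$, $N\in Pic^0(Y)$, plus possible torsion in $NS$. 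The key computation is then
\[
\mathsf{PBs}|\omega_{X\times Y}| \;=\; \bigcap_{L,N}\mathsf{Bs}\bigl(|K_X\otimes L|\bigr)\times Y \;\cup\;(\cdots),
\]
and I would want to show this Picard base locus is \emph{empty}. The input is the algebraically-moving computation already used in the proof of Theorem~\ref{symmetriccurve}: the proof of Theorem~\ref{thmA} shows (see remark (3) after Theorem~\ref{thmA}) that $\bigcap_{L\in Pic^0(S^iC)}\mathsf{Bs}|\omega_{S^iC}\otimes L|=\varnothing$ already when $i\le g-1$. Combined with $\mathsf{Bs}|K_{S^iC}|=\varnothing$ (our hypothesis, which handles the ``$L=\mathcal{O}$'' slice and, more importantly, controls the behavior of the product with $Y$ where $Y$ may have a nonempty canonical base locus), I would deduce $\mathsf{PBs}|\omega_{X\times Y}|=\varnothing$, and then Corollary~\ref{emptynosod} (the ``$Z$ empty'' case) applied to $X\times Y$ finishes the proof.

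\medskip

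The main obstacle, and the place where the hypothesis $\mathsf{Bs}|K_{S^iC}|=\varnothing$ is genuinely needed rather than the weaker $\mathsf{PBs}|\omega|=\varnothing$, is the factor $Y$: for a general $Y$ one has $\mathsf{Bs}|K_Y|\neq\varnothing$ and $Pic^0(Y)$ contributes nothing new, so the base locus of $\omega_{X\times Y}$ over the $Y$-direction can only be killed by moving in the $X$-direction with an \emph{untwisted} canonical bundle — i.e.\ one needs $\mathsf{Bs}|K_X|=\varnothing$, not just that the twisted linear systems have empty common base locus. Making this precise requires a careful analysis of base loci of external tensor products, $\mathsf{Bs}|E\boxtimes F| = (\mathsf{Bs}|E|\times Y)\cup(X\times\mathsf{Bs}|F|)$ together with the compatibility of this with intersecting over $Pic^0$; once that bookkeeping is done the result drops out of the Section~3 machinery. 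I would expect the write-up to be short modulo invoking the proof of Theorem~\ref{symmetriccurve} and Pirozhkov's criterion.
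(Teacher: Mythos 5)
Your proposal does not reproduce the paper's argument and, more importantly, has genuine gaps in both directions.

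First, a definitional issue that undermines the whole plan: stable indecomposability in Pirozhkov's sense is \emph{not} ``$D(X\times Y)$ is indecomposable for every smooth projective $Y$.'' With that reading, no variety at all would qualify, since $D(X\times\mathbb{P}^1)=\langle D(X),D(X)\rangle$ is always a nontrivial decomposition. What Pirozhkov requires is the absence of nontrivial \emph{$Y$-linear} semi-orthogonal decompositions of $D(X\times Y)$ (i.e.\ decompositions into $D(Y)$-module subcategories). Because of this, your strategy for the ``hard'' direction --- compute $\mathsf{PBs}\vert\omega_{X\times Y}\vert$ and invoke Corollary~\ref{refinesod}/Theorem~\ref{emptynosod} --- is aimed at the wrong target: Section~3 rules out \emph{all} SODs of $D(X\times Y)$, not merely the $Y$-linear ones, and indeed that stronger conclusion is simply false. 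Concretely, take $Y=\mathbb{P}^n$: then $\omega_Y\otimes N$ has no sections for any $N\in Pic^{0}(Y)=0$, so $\mathsf{Bs}\vert(\omega_X\otimes L)\boxtimes \omega_Y\vert = X\times Y$ for every $L$, and $\mathsf{PBs}\vert\omega_{X\times Y}\vert = X\times Y$ is all of the space. More generally $\mathsf{PBs}\vert\omega_{X\times Y}\vert\supseteq X\times\mathsf{PBs}\vert\omega_Y\vert$, which is typically nonempty. So the ``bookkeeping'' you hope will make the base locus empty cannot succeed; the hypothesis $\mathsf{Bs}\vert K_X\vert=\varnothing$ does not help kill the $Y$-direction.

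Second, for the ``easy'' direction you assert that a base point $z$ lets one ``build, after a suitable base change or product with an auxiliary variety, a nontrivial SOD supported near $z$,'' and that ``stable indecomposability is known (Pirozhkov) to be equivalent to $\mathsf{Bs}\vert K_X\vert=\varnothing$ for a large class of varieties.'' Neither claim is correct as stated, and the paper does something quite different and more concrete. The real mechanism is Albanese geometry. By \cite[Proposition 3.4]{biswas2020semiorthogonal}, for $X=S^iC$ with $i\le g-1$ one has $\mathsf{Bs}\vert K_{S^iC}\vert=\varnothing$ if and only if the Albanese (Abel--Jacobi) map $u:S^iC\to J(C)$ is finite. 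The forward direction of the theorem then follows from \cite[Theorem~1.4]{pirozhkov2021stably}: a variety with finite Albanese morphism is stably indecomposable. For the converse, if $u$ is not finite, some fiber of $u$ is a projective space $\mathbb{P}^m$ with $m\geq 1$ (this is specific to symmetric products of curves: the fibers of the Abel--Jacobi map are the complete linear systems $\vert D\vert$); since $D(\mathbb{P}^m)$ has a nontrivial decomposition, \cite[Corollary~2.5]{pirozhkov2021stably} says $S^iC$ cannot be stably indecomposable. So both halves of the argument are citations to Pirozhkov's structural results routed through the finiteness of the Albanese map, not through the $\mathsf{PBs}$ machinery of Section~3. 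You should take away that (i) the $\mathsf{PBs}$ technique controls ordinary SODs, not the relative/linear ones appearing in the definition of stable indecomposability, and (ii) the equivalence being proved is not with $\mathsf{PBs}$ but with the genuine base locus, precisely because finiteness of the Albanese map is the operative condition.
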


\begin{rem}
For $X=S^{i}C$, $g(C)\geq 2$, $i\leq g(C)-1$, two notions stably semi-orthogonal indecomposability and indecomposability can be distinguished by $\mathsf{PBs}\vert \omega_{X}\vert$ and $\mathsf{Bs}\vert \omega_{X} \vert$. It is interesting to find more  examples.
\end{rem}

In Section $5$, we give an interesting application to the Bridgeland stability conditions. It is closely related to a notion called geometric stability conditions, see for example\cite[Definition 2.8]{fu2021stability}.
\begin{thm}(= Theorem \ref{boundedphase})
Let $Z=\mathsf{PBs}\vert \omega_{X}\vert$.
Let $\sigma$ be a Bridgeland stability condition of $D(X)$. Take any $k(x)$, $x\in X\setminus Z$. Assume $k(x)$ is not semi-stable, then the phase number of $HN$ factors of $k(x): \phi_{1}>\phi_{2}>\cdots >\phi_{n}$ satisfies
$$\phi_{i}-\phi_{i+1}\leq \dim X-1.$$
\end{thm}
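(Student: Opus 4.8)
The plan is to feed the dichotomy of Theorem \ref{Picsod} into the structure of Harder–Narasimhan filtrations for a Bridgeland stability condition. Fix $x \in X \setminus Z$. By Theorem \ref{Picsod} the skyscraper $k(x)$ lies in either $\mathcal{A}$ or $\mathcal{B}$ for \emph{every} semi-orthogonal decomposition $D(X) = \langle \mathcal{A}, \mathcal{B}\rangle$ — and the point is that the HN filtration of $k(x)$ with respect to $\sigma$ itself produces such decompositions. Concretely, given $\sigma = (Z_\sigma, \mathcal{P})$ with HN factors $A_1, \dots, A_n$ of $k(x)$ of phases $\phi_1 > \cdots > \phi_n$, and any real number $\phi_i > t > \phi_{i+1}$, the pair $(\mathcal{P}(>t)^{\mathrm{op\text{-}perp}}, \dots)$ — more precisely the subcategories generated by HN/torsion truncations at $t$ — should be arranged into a semi-orthogonal decomposition of $D(X)$ (this uses that $\mathcal{P}(>t)$ and $\mathcal{P}(\le t)$ give a t-structure-type splitting, and one promotes it to an SOD of the whole $D(X)$ using that $X$ is smooth projective so $D(X)$ has a Serre functor and the relevant subcategories are admissible). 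The truncation $\tau_{>t} k(x)$ has HN factors $A_1,\dots,A_i$ and $\tau_{\le t} k(x)$ has factors $A_{i+1},\dots,A_n$; both are nonzero when $1 \le i \le n-1$. Since $k(x)$ itself must lie entirely on one side of this SOD, one of these truncations vanishes — contradiction — \emph{unless} the SOD is trivial, i.e. unless one of the two generated subcategories is zero, which forces a constraint relating $t$ to the phases.

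The key quantitative input is then the standard fact that for $x \notin Z = \mathsf{PBs}|\omega_X|$, the object $k(x)$ is (in the appropriate twisted sense) "spread out" only $\dim X$ worth: more precisely I expect to use that $k(x)$ admits a Koszul-type resolution and that $\mathrm{Ext}^j(k(x), k(x))$ is concentrated in degrees $0 \le j \le \dim X$, together with the fact — this is where $Z$ and Theorem \ref{Picsod} enter — that there is a line bundle $L \in \mathrm{Pic}^0(X)$ with $x \notin \mathsf{Bs}|\omega_X \otimes L|$, hence a nonzero map realizing $k(x)$ as a quotient obstruction controlled by $\omega_X$. The upshot should be: if two HN factors $A_i$, $A_{i+1}$ of $k(x)$ had $\phi_i - \phi_{i+1} > \dim X - 1$, then $\mathrm{Hom}(A_i, A_{i+1}[m]) = 0$ for all $m \le \dim X$ would force the extension assembling $k(x)$ from its HN factors to split off $A_i$ as a direct summand, but $k(x)$ is indecomposable in $D(X)$ (its endomorphism ring is local, being a skyscraper at a point of a variety), contradiction. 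So I would run the argument factor-by-factor on consecutive steps of the HN filtration, each time extracting either an SOD-splitting of $k(x)$ (ruled out by Theorem \ref{Picsod}) or a direct-sum splitting (ruled out by indecomposability of $k(x)$), and in both cases the only escape is $\phi_i - \phi_{i+1} \le \dim X - 1$.

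The order of steps: (1) record that $k(x)$ is indecomposable with $\mathrm{Ext}^{>\dim X}(k(x),-) = 0$ on skyscrapers and, via $x\notin Z$, that the relevant vanishing of negative self-exts forces any "long gap" in phases to split the object; (2) show how a phase cut $t$ strictly between two consecutive HN phases yields a semi-orthogonal decomposition of $D(X)$ with $k(x)$ split across it — here one must check admissibility, using smoothness/properness of $X$ and that the HN truncation subcategories of $\mathcal{P}$ extend to admissible subcategories of $D(X)$; (3) invoke Theorem \ref{Picsod} to conclude $k(x)$ lies on one side, so one truncation is zero, contradicting $\phi_i > \phi_{i+1}$ unless the gap constraint holds; (4) assemble the bound $\phi_i - \phi_{i+1} \le \dim X - 1$ for all $i$.

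The main obstacle I anticipate is step (2): turning the HN truncation at a real phase $t$ into a genuine semi-orthogonal decomposition of the \emph{whole} $D(X)$ that is in the scope of Theorem \ref{Picsod}. The naive splitting $(\mathcal{P}(>t), \mathcal{P}(\le t))$ is a t-structure, not an SOD, and the subcategories $\mathcal{P}((t, t+1])$-type slices need not be admissible for arbitrary irrational $t$; one likely needs to choose $t$ rational, or pass to a finite-length heart / the associated "wide" subcategories, or invoke a result (e.g. in the style of Pirozhkov's stably indecomposable formalism, or standard facts about admissible subcategories generated by stable objects of bounded phase) guaranteeing the truncation is admissible. Handling that admissibility cleanly — and making sure the resulting SOD is nontrivial exactly when both truncations $\tau_{>t}k(x)$, $\tau_{\le t}k(x)$ are nonzero — is the technical heart of the argument; everything else is bookkeeping on HN filtrations plus the two indecomposability inputs.
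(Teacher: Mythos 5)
There is a genuine gap, and it is exactly the one you flag yourself as ``the main obstacle'' in step (2): the HN truncation of a Bridgeland stability condition at a phase $t$ produces a \emph{t-structure} $(\mathcal{P}(>t),\mathcal{P}(\le t))$, not a semi-orthogonal decomposition of $D(X)$. The two classes of a t-structure are not triangulated subcategories (they are not closed under both shifts), so they are not even candidates for the pieces of an SOD; and the orthogonality a t-structure gives, $\mathrm{Hom}(\mathcal{P}(>t),\mathcal{P}(\le t)[m])=0$ for $m<0$, is much weaker than the full $\mathrm{Hom}^\ast$-vanishing an SOD requires. No amount of admissibility bookkeeping fixes this: the object you want to split, $k(x)$, genuinely has a multi-step HN filtration for a generic $\sigma$, and this is perfectly compatible with the indecomposability of $D(X)$. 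So the plan ``phase cut $\Rightarrow$ SOD $\Rightarrow$ invoke Theorem~\ref{Picsod}'' cannot be carried out. Relatedly, the Ext-vanishing fallback in your step (1) is also not enough on its own: it would at best show $k(x)$ splits off an HN factor, and you would still need to locate where the constant $\dim X-1$ comes from.

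The paper's actual argument sidesteps SODs entirely and instead redeploys the Serre-duality trick from the proof of Theorem~\ref{pic0sod} \emph{directly at the level of the HN filtration}. Setting $E_i$ the $i$-th step of the HN filtration and $K_i=\mathrm{Cone}(E_i\to k(x))$, one composes $K_i\to E_i[1]$ with multiplication by a section $s$ of $\omega_X\otimes L$ (using \cite{Polishchuk} to reduce to $L=\mathcal{O}_X$ since $\mathrm{Pic}^0$ preserves the HN filtration of skyscrapers), and observes via Serre duality that $\mathrm{Hom}(K_i,E_i[1]\otimes\omega_X)\cong \mathrm{Hom}(E_i,K_i[\dim X-1])^\vee$. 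This vanishes once $\phi^-(E_i)>\phi^+(K_i)+\dim X-1$, which splits $k(x)$ over $U=X\setminus Z(s)$ and leads to the same ``$k(x)$ is simple'' contradiction you had in mind; the explicit $\dim X-1$ is forced by the shift in Serre duality. An auxiliary induction shows $\phi^+(K_i)\le\phi_{i+1}$, and combining gives the bound. So the correct route is to apply the canonical-section/Serre-duality argument to the maps in the HN filtration itself, not to try to manufacture a new SOD of $D(X)$ to feed into Theorem~\ref{Picsod}.
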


Lastly, observe that if the $Pic^{0}(X)$ is larger, it is more possible to get that $D(X)$
admits no nontrivial semi-orthogonal decompositions. We propose a refined version of Conjecture \ref{minimalconj} that will probably be easier to obtain.

\begin{conj}\label{newconj}
Let $X$ be a smooth projective variety. Assume $K_{X}$ is nef and effective, and $q(X)=h^{1}(\mathcal{O}_{X})\geq 1$ (this implies $\dim Pic^{0}(X)\geq 1$). Then $D(X)$ admits no nontrivial semi-orthogonal decompositions.
\end{conj}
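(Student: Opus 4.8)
The plan is to reduce the statement, by the machinery of Section~3, to a purely birational question about the Picard base locus, and then to attack that question with generic vanishing theory. Concretely, if $D(X)=\langle\mathcal A,\mathcal B\rangle$ were a nontrivial semi-orthogonal decomposition, Corollary~\ref{refinesod} would put all skyscrapers $k(x)$ with $x\notin Z:=\mathsf{PBs}\vert\omega_X\vert$ on one side and force every object of the other side to be supported on $Z$; Theorem~\ref{emptynosod} then finishes when $Z=\varnothing$ or $\dim Z=0$, and Theorem~\ref{inductionprocess} provides an iteration that shrinks $Z$ using line bundles trivial around it. So it suffices to prove that, for $X$ with $K_X$ nef and effective and $q(X)\geq 1$, one has $\bigcap_{L\in Pic^{0}(X)}\mathsf{Bs}\vert\omega_X\otimes L\vert=\varnothing$, or at least a statement good enough to feed the iteration.

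The tool for moving sections inside $Pic^{0}(X)$ is the Albanese map $a\colon X\to A:=\mathrm{Alb}(X)$, which is nonconstant precisely because $q(X)\geq 1$ and for which $a^{*}\colon\widehat{A}\xrightarrow{\ \sim\ }Pic^{0}(X)$. First I would treat the case where $a$ is generically finite, i.e.\ $X$ has maximal Albanese dimension; this is exactly the situation of Theorem~\ref{symmetriccurve}, where $X=S^{i}C$ with $i\leq g-1$ and $a$ is the Abel--Jacobi map. Here $a_{*}\omega_X$ is a $GV$-sheaf, and writing it via the Chen--Jiang decomposition as a direct sum of twists of $M$-regular sheaves pulled back from quotient abelian varieties of $A$, each summand is continuously globally generated in the sense of Pareschi--Popa. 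Unwinding this on $X$ shows that at every point $x$ where $a$ is finite there is an $\alpha\in\widehat{A}$ and a section of $\omega_X\otimes a^{*}P_{\alpha}$ not vanishing at $x$, so that $Z$ is contained in the proper closed locus $E$ where $a$ has positive-dimensional fibres. One then clears $E$: fixing $D_0\in\vert\omega_X\vert$ (which exists since $K_X$ is effective) already removes $E\setminus\mathrm{Supp}\,D_0$ from $Z$, and one hopes to push the remaining points of $E\cap\mathrm{Supp}\,D_0$ out of $Z$ by iterating Theorem~\ref{inductionprocess} with line bundles trivial near the offending components --- this is where effectivity (and not mere pseudo-effectivity) of $K_X$ is really used, and it is the algebraically moving step carried out for symmetric products in the proof of Theorem~\ref{symmetriccurve}.

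The hard part, which I expect to be the genuine obstacle, is the case of small Albanese dimension $1\leq\dim a(X)<\dim X$. Passing to the Stein factorization $X\xrightarrow{f}Y\to a(X)$, the morphism $f$ is a fibration whose general fibre $F$ is smooth of dimension strictly less than $\dim X$, with $\omega_F\cong\omega_X\vert_F$ by adjunction, hence again nef and effective; one would like to induct on dimension. The trouble is that $F$ may well have $q(F)=0$ --- already for $X=E\times S$ with $E$ an elliptic curve and $S$ a minimal surface of general type with $q(S)=0$, one computes $Z=E\times\mathsf{Bs}\vert\omega_S\vert$, which is positive-dimensional, while $Pic^{0}(X)$ only moves sections in the $E$-direction --- so the inductive hypothesis is unavailable for $F$ and one is thrown back onto Conjecture~\ref{minimalconj} for the fibres. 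Breaking this circularity seems to require genuinely new input: a relative form of the continuous-global-generation argument over $Y$, or an Iitaka-fibration analysis over $a(X)$ in the spirit of Hacon--Pardini and Chen--Hacon exploiting that the vertical twists $a^{*}P_{\alpha}$ still move sections near a general fibre. In the absence of such an argument the statement stays conjectural; what the present methods deliver unconditionally is the maximal-Albanese-dimension cases --- in particular all $S^{i}C$ with $i\leq g-1$ (Theorem~\ref{symmetriccurve}) and the products of Theorem~\ref{blockrank3bundle}, together with abelian varieties, for which $\mathsf{PBs}\vert\omega_X\vert=\varnothing$ outright.
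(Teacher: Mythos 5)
This statement is a \emph{conjecture}, not a theorem: the paper explicitly proposes it at the end of the introduction as a weakening of Conjecture~\ref{minimalconj} that ``will probably be easier to obtain,'' and offers no proof. So there is no proof in the paper to compare against. You have correctly recognized this --- your text is an analysis of a possible strategy rather than a purported proof, and you are explicit at the end that ``the statement stays conjectural.'' That self-assessment is accurate, and the special cases you list as unconditionally available (symmetric products with $i\leq g-1$, their products with abelian varieties, abelian varieties themselves) are exactly the ones the paper does establish via $\mathsf{PBs}\vert\omega_X\vert=\varnothing$.

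One caveat on the part you present as essentially done. Even in the maximal Albanese dimension case, the step ``each Chen--Jiang summand is continuously globally generated, hence for every $x$ with $a$ finite near $x$ some $\omega_X\otimes a^{*}P_\alpha$ is free at $x$'' is not automatic. The Chen--Jiang decomposition writes $a_*\omega_X$ as $\bigoplus_i (\pi_i^{*}\mathcal F_i)\otimes P_{\alpha_i}$ with $\mathcal F_i$ $M$-regular on a \emph{quotient} $A_i$ of $\mathrm{Alb}(X)$; $M$-regularity on $A_i$ gives continuous global generation on $A_i$, but the pullback $\pi_i^{*}\mathcal F_i$ to $A$ need not be continuously globally generated when $\pi_i$ has positive-dimensional fibres (e.g.\ $\pi_i^{*}\mathcal O_{A_i}$ with $\dim A_i<\dim A$). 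So already this case requires care, and I would not describe it as settled by known GV technology; it would need an argument that the sections actually separate points of $X$ after twisting. Your diagnosis of the small-Albanese-dimension obstruction, with the example $X=E\times S$ giving $\mathsf{PBs}\vert\omega_X\vert=E\times\mathsf{Bs}\vert\omega_S\vert$, is correct and nicely illustrates why $Pic^{0}$-twisting alone cannot be enough and why the conjecture remains genuinely open.
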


\begin{rem}\
\begin{enumerate}
    \item $S^{i}C$ satisfies the assumptions in the conjecture when $i\leq g(C)-1$ and $g(C)\geq 2$.
    \item If $q(X)=0$, then
    $$\mathsf{Bs}\vert\omega_{X}\vert=\mathsf{PBs}\vert\omega_{X}\vert.$$
    The criteria of Corollary \ref{emptynosodA} is the same as the one of Corollary 1.5 in \cite{kawatani2018nonexistence}, but there are other techniques to deal with these cases. It is to consider the semi-orthogonal decompositions varied in a family\cite{bastianelli2020indecomposability}.
    For example, the Horikawa surfaces and the Ciliberto double planes are all of irregularity $0$, and in some cases, the base loci of the canonical bundles will be curves, but their bounded derived categories don't admit nontrivial semi-orthogonal decompositions by the well-behaviour of semi-orthogonal decompositions in families \cite[Section 4]{bastianelli2020indecomposability}.
\end{enumerate}
\end{rem}

\begin{notn}
$X$ is always assumed to be smooth projective variety over $\mathbb{C}$. We write $D(X)$ as the bounded derived category of coherent sheaves of $X$, and $D_{qch}(X)$ as the derived category of $\mathcal{O}_{X}$ modules with quasi-coherent cohomology. We write $P_{g}(X)=h^{0}(X,\omega_{X})$, $q(X)=h^{1}(X,\mathcal{O}_{X})$, and $K^{2}=c_{1}(X)^{2}$.
\end{notn}

\begin{acks}
The author would like to thank Arend Bayer, Pieter Belmans, Will Donovan, Qingyuan Jiang, Shinnosuke Okawa, XueQing Wen for helpful conversations of the related topics. The author thanks ShiZhuo Zhang for pointing out a simpler proof of Theorem \ref{pic0sod}, see Remark~\ref{trickz}.
\end{acks}

\section{Semi-orthogonal decomposition}
\subsection{Semi-orthogonal Decomposition}
 \begin{defn}\label{defnsod}
  Let $\mathcal{T}$ be a $k$-linear triangulated category. We say a collection of sub-triangulated categories $\{\mathcal{A}_{1},\mathcal{A}_{2},\cdots,\mathcal{A}_{n}\}$ of $\mathcal{T}$ is a semi-orthogonal decomposition of $\mathcal{T}$ if the following holds.

  \begin{enumerate}
    \item For $i>j$, we have $Hom(\mathcal{A}_{i},\mathcal{A}_{j})=0$.
    \item Given any object $E\in\mathcal{T}$, there is a sequence of morphisms
        $$\xymatrix{E_{n+1}=0\ar[r]&E_{n}\ar[r]&E_{n-1}\ar[r]&E_{n-2}\ar[r]&\cdots\ar[r]&E_{2}\ar[r]&E_{1}=E}$$
    such that the cone of $E_{i}\rightarrow E_{i-1}$ belongs to $\mathcal{A}_{i-1}$.
  \end{enumerate}
   We write the semi-orthogonal decomposition as $\mathcal{T}=\langle \mathcal{A}_{1},\mathcal{A}_{2},\cdots,\mathcal{A}_{n}\rangle$.
 \end{defn}

  \begin{rem}
   We often call this version of semi-orthogonal decomposition the (weak) semi-orthogonal decomposition. We say strong semi-orthogonal decomposition if $\mathcal{A}_{i}$ is an admissible subcategory, that is, the embedding into $\mathcal{T}$ has both left and right adjoint.
   \end{rem}

 \begin{lem}\label{unique}
 Let $\mathcal{T}=\langle \mathcal{A},\mathcal{B}\rangle$ be a semi-orthogonal decomposition.
 Let $E\in \mathcal{T}$, consider a triangle from the semi-orthogonal decomposition $$b\rightarrow E\rightarrow a\rightarrow b[1].$$
 Suppose there is an another triangle
 $$b'\rightarrow E \rightarrow a' \rightarrow b'[1]$$
 such that $b'\in \mathcal{B}$, $a'\in \mathcal{A}$, then there exist a unique
 isomorphism $g$ from $b$ to $b'$, and also a unique isomorphism $f$ from $a$ to $a'$ such that the following diagram is commutative.
 $$\xymatrix{b\ar[r]^{i}\ar[d]_{g}&E\ar[r]^{L}\ar[d]^{id}&a\ar[r]\ar[d]_{f}&b[1]\ar[d]_{g[1]}\\
 b'\ar[r]^{i'}&E\ar[r]^{L'}&a'\ar[r]&b'[1]}$$
 \end{lem}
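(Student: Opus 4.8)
The plan is to run a short diagram chase powered entirely by the single vanishing $Hom(\mathcal{B},\mathcal{A})=0$ that defines the semi-orthogonal decomposition $\mathcal{T}=\langle\mathcal{A},\mathcal{B}\rangle$. The first step I would isolate is a \emph{projection lemma}: for any object $T\in\mathcal{B}$ and any triangle $\beta\xrightarrow{j}E\to\alpha\to\beta[1]$ with $\beta\in\mathcal{B}$ and $\alpha\in\mathcal{A}$, applying $Hom(T,-)$ gives the exact sequence
$$Hom(T,\alpha[-1])\to Hom(T,\beta)\xrightarrow{j_{*}}Hom(T,E)\to Hom(T,\alpha),$$
whose two outer groups vanish because $\alpha$ and $\alpha[-1]$ lie in $\mathcal{A}$ (the subcategory $\mathcal{A}$ being triangulated, hence closed under shifts) while $T\in\mathcal{B}$. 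Therefore $j_{*}$ is an isomorphism: every morphism from an object of $\mathcal{B}$ to $E$ factors through $j$, and the factorization is unique.

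Applying this to $T=b$ together with the primed triangle $b'\xrightarrow{i'}E\xrightarrow{L'}a'\to b'[1]$ produces a \emph{unique} $g\colon b\to b'$ with $i'\circ g=i$; this is the required $g$. Applying the same lemma to $T=b'$ and the unprimed triangle gives a unique $h\colon b'\to b$ with $i\circ h=i'$. Then $i\circ(h\circ g)=i'\circ g=i=i\circ id_{b}$, so the uniqueness clause of the projection lemma forces $h\circ g=id_{b}$, and symmetrically $g\circ h=id_{b'}$; hence $g$ is an isomorphism. Next, since the square $i'\circ g=id_{E}\circ i$ commutes, the triangulated-category axiom completing a commutative square to a morphism of triangles supplies an $f\colon a\to a'$ making the whole displayed diagram commute. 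As $g$ and $id_{E}$ are isomorphisms, the usual five-lemma argument (applied to the long exact sequences $Hom(W,-)$ for all $W$) shows that $f$ is an isomorphism as well.

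Finally I would check that $f$ is unique — the one point that is not formal, since the third map of a morphism of triangles is in general not determined by the other two. If $f_{1},f_{2}\colon a\to a'$ both fit into the diagram, then commutativity of the middle square gives $f_{1}\circ L=L'=f_{2}\circ L$, so $(f_{1}-f_{2})\circ L=0$. Applying $Hom(-,a')$ to $b\xrightarrow{i}E\xrightarrow{L}a\to b[1]$ yields
$$Hom(b[1],a')\to Hom(a,a')\xrightarrow{-\circ L}Hom(E,a'),$$
and $Hom(b[1],a')=0$ because $b[1]\in\mathcal{B}$ and $a'\in\mathcal{A}$; hence $f_{1}=f_{2}$.

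I do not anticipate a real obstacle here: everything follows formally from the orthogonality relation. The only places that need attention are remembering that $\mathcal{A}$ and $\mathcal{B}$ are closed under shifts (so that the Hom-groups between shifted objects above really do vanish) and the uniqueness of $f$, where semi-orthogonality is precisely what rigidifies the otherwise non-canonical completion of the morphism of triangles.
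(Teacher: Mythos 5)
Your proof is correct and follows essentially the same route as the paper's: both rest on the observation that applying $Hom(T,-)$ (for $T\in\mathcal{B}$) or $Hom(-,S)$ (for $S\in\mathcal{A}$) to a triangle $\beta\to E\to\alpha\to\beta[1]$ with $\beta\in\mathcal{B},\alpha\in\mathcal{A}$ kills the outer terms of the long exact sequence, so the middle map is an isomorphism. The one genuine difference is in how $f$ is produced and shown to be an isomorphism: the paper gets $f$ directly from the isomorphism $L^{*}\colon Hom(a,a')\xrightarrow{\sim}Hom(E,a')$ (dual to the argument for $g$) and shows $f$ is invertible by the same uniqueness trick used for $g$, whereas you invoke TR3 to produce $f$ compatible with the whole diagram and then use a five-lemma argument plus a separate injectivity check for uniqueness. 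Your variant is slightly more careful in that TR3 guarantees commutativity of the right-hand square from the start, a point the paper leaves implicit; the paper's variant is slightly more economical in that it avoids TR3 and the five lemma entirely. Both are correct, and the differences are cosmetic rather than conceptual.
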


 \begin{proof}
 Apply $Hom(b,\bullet)$ to the second triangle, we get isomorphism
 $Hom(b,b')\cong Hom(b,E)$. Take element $id\circ i\in Hom(b,E)$. So we get a unique element $g$ with diagram commutative. Similar to the morphism $f$. To show that $g$ and $f$ are isomorphism, we apply the inverse direction to obtain morphism $g'$ and $f'$. Since eventually, the identity morphism is the unique element which transforms the triangle to the same triangle, $g'$ and $f'$ are the inverses of
 $g$ and $f$ respectively.
 \end{proof}

\begin{lem}\label{pic0invariant1} \cite[Theorem 3.9]{kawatani2018nonexistence}
Let $X$ be a projective scheme. Assume $Perf(X)=\langle \mathcal{A},\mathcal{B}\rangle$, then for any $L\in Pic^{0}(X)$, we have
$\mathcal{A}\otimes L=\mathcal{A}$.
\end{lem}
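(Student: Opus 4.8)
The plan is to reduce the statement to the invariance of the subcategory $\mathcal{A}$ under the autoequivalence $(-)\otimes L$ for each $L\in \mathrm{Pic}^0(X)$, and to exploit that $\mathrm{Pic}^0(X)$ is a connected algebraic group. First I would recall that tensoring by any line bundle $L$ is an exact autoequivalence of $\mathrm{Perf}(X)$, so $\langle \mathcal{A}\otimes L,\mathcal{B}\otimes L\rangle$ is again a semi-orthogonal decomposition of $\mathrm{Perf}(X)$. The key point to extract is that the component $\mathcal{A}$ (being the left-orthogonal complement $^{\perp}\mathcal{B}$, equivalently the right admissible piece) is \emph{rigid} under small deformations of the autoequivalence: if $L$ lies in a connected family of line bundles containing $\mathcal{O}_X$, then $\mathcal{A}\otimes L$ cannot jump away from $\mathcal{A}$.

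The main technical step is to make "cannot jump" precise. I would argue as follows. Fix a perfect complex $E\in\mathcal{A}$ and its semi-orthogonal triangle $b\to E\otimes L^{-1}\to a\to b[1]$ with $a\in\mathcal{A}$, $b\in\mathcal{B}$; the claim $\mathcal{A}\otimes L=\mathcal{A}$ amounts to showing $b=0$ for all such $E$, i.e. $\mathrm{Hom}(\mathcal{A}\otimes L,\mathcal{B})=0$ and symmetrically $\mathrm{Hom}(\mathcal{A},\mathcal{B}\otimes L)=0$. Consider the universal line bundle $\mathcal{P}$ on $X\times \mathrm{Pic}^0(X)$ (a Poincar\'e bundle, rigidified so that $\mathcal{P}|_{X\times\{0\}}\cong\mathcal{O}_X$). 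For objects $a\in\mathcal{A}$, $b\in\mathcal{B}$ the groups $\mathrm{Hom}_{X}(a\otimes \mathcal{P}_t, b)$ organize into the cohomology sheaves of $R\!\mathrm{Hom}$ computed along the family, which is a perfect complex on $\mathrm{Pic}^0(X)$; its support is a closed subset. At $t=0$ all these Hom-groups vanish since $\langle\mathcal{A},\mathcal{B}\rangle$ is semi-orthogonal, so the support misses the origin. The crucial input is then a semicontinuity/specialization argument combined with connectedness of $\mathrm{Pic}^0(X)$: I would show that the locus of $t$ where the decomposition is preserved is both open and closed, hence everything, using that for \emph{every} $t$ the functor $(-)\otimes\mathcal{P}_t$ is an equivalence so $\langle\mathcal{A}\otimes\mathcal{P}_t,\mathcal{B}\otimes\mathcal{P}_t\rangle$ is a valid semi-orthogonal decomposition, and two semi-orthogonal decompositions with a component in common in a suitable sense must coincide (here Lemma \ref{unique} supplies the uniqueness of the decomposition triangles that pins this down).

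The step I expect to be the genuine obstacle is controlling the family-theoretic behavior: verifying that $R\!\mathrm{Hom}$ along $X\times \mathrm{Pic}^0(X)$ is perfect and that its vanishing at the origin propagates, rather than just holding generically. The subtlety is that an a priori nonzero $b$ could be supported on a proper closed subset of $\mathrm{Pic}^0(X)$ not containing $0$, so openness alone is not enough — one really needs the closedness coming from the fact that the \emph{same} $\mathcal{A}$ and $\mathcal{B}$ appear for every $t$, not varying subcategories. Once that rigidity is in hand, $\mathrm{Pic}^0(X)$ being connected forces $\mathcal{A}\otimes L=\mathcal{A}$ for all $L\in\mathrm{Pic}^0(X)$, and the symmetric statement for $\mathcal{B}$ follows formally. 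Since this is cited from \cite[Theorem 3.9]{kawatani2018nonexistence}, I would in practice appeal directly to that reference for the details of the semicontinuity argument.
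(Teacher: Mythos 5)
The paper does not prove this lemma: it is imported verbatim from \cite[Theorem 3.9]{kawatani2018nonexistence}, so deferring to that reference at the end is consistent with the paper's own treatment. Your broad strategy --- organize the twists $\mathcal{A}\otimes\mathcal{P}_t$ as a family over the connected group $\mathrm{Pic}^0(X)$ and argue that the locus preserving the decomposition is everything --- is the right shape of argument and matches the spirit of the cited proof.

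Two points need fixing, though. First, you have the Hom direction reversed. Under the convention of Definition~\ref{defnsod}, used consistently in the paper (see e.g.\ Remark~\ref{tensorinvariant}), $D(X)=\langle\mathcal{A},\mathcal{B}\rangle$ means $\mathrm{Hom}(\mathcal{B},\mathcal{A})=0$ and $\mathcal{A}=\mathcal{B}^{\perp}$; hence ``$b=0$ for all $E\in\mathcal{A}$'' is equivalent to $\mathrm{Hom}^{\ast}(\mathcal{B},\mathcal{A}\otimes L^{-1})=0$, not to $\mathrm{Hom}(\mathcal{A}\otimes L,\mathcal{B})=0$, and the family you should study is $t\mapsto\mathrm{Hom}(b,a\otimes\mathcal{P}_t)$, not $\mathrm{Hom}(a\otimes\mathcal{P}_t,b)$. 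This is probably a slip rather than a misunderstanding, but as written the conditions you propose to verify are not the ones that characterize $\mathcal{A}\otimes L=\mathcal{A}$.

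Second, ``open and closed'' is not the right way to exploit connectedness here. The set $S=\{t\in\mathrm{Pic}^0(X):\mathcal{A}\otimes\mathcal{P}_t=\mathcal{A}\}$ is a subgroup of $\mathrm{Pic}^0(X)$, and a subgroup of a connected group equals the whole group if and only if it is open (openness then forces closedness for free, while closedness alone proves nothing --- $\{0\}$ is closed). So the entire burden falls on openness of $S$, which requires a semicontinuity argument that handles the infinitely many conditions indexed by pairs $(a,b)$ uniformly: per-object upper semicontinuity of $\mathrm{Hom}$ only gives vanishing of each individual condition on some neighborhood of $0$, and those neighborhoods could shrink to nothing as $(a,b)$ varies. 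Supplying that uniformity (via the projection functor and its kernel, rather than object-by-object) is the genuine content of the cited theorem, and you correctly flag it as the step you cannot carry out here.
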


\begin{rem}\label{tensorinvariant}
This would also imply $\mathcal{B}\otimes L=\mathcal{B}$ for any $L\in Pic^{0}(X)$. The proof is easy:
consider any object $E\otimes L\in \mathcal{B}\otimes L$, where $E\in \mathcal{B}$. Consider the triangle with respect to the semi-orthogonal decomposition
$$b\rightarrow E\otimes L\rightarrow a\rightarrow b[1].$$
Tensor by line bundle $L^{-1}$, clearly $a\otimes L^{-1}\in \mathcal{A}$. We get a new triangle
$$b\otimes L^{-1}\rightarrow E\rightarrow a\otimes L^{-1}\rightarrow b\otimes L^{-1}[1]$$
Since $Hom^{\ast}(E,a\otimes L^{-1})=0$, therefore $b\otimes L^{-1}\cong E\oplus a\otimes L^{-1}[-1]$. Since $Hom^{\ast}(b\otimes L^{-1},a\otimes L^{-1})=0$, therefore we get $a\cong 0$. Thus, $E\otimes L\cong b\in \mathcal{B}$.
\end{rem}

\begin{lem}\label{pic0invariant2}
Let $X$ be a smooth projective variety. Assume $D(X)=\langle \mathcal{A},\mathcal{B}\rangle$. Take any skyscraper sheaf
$k(x)$, consider the triangle $b\rightarrow k(x)\rightarrow a\rightarrow b[1]$ from
the semi-orthogonal decomposition. Then for any line bundle $L\in Pic^{0}(X)$, we have $b\otimes L\cong b$, $a\otimes L\cong a$.
\end{lem}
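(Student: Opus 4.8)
The plan is to reduce everything to the uniqueness statement of Lemma \ref{unique}. The starting observation is that a skyscraper sheaf is insensitive to twisting by a line bundle: for any $L\in Pic^{0}(X)$ (indeed any $L\in Pic(X)$), one has $k(x)\otimes L\cong k(x)$, because $L$ is locally free of rank one, hence trivial on a neighbourhood of $x$, while $k(x)$ is supported at the single point $x$. So in $D(X)$ the object $k(x)\otimes L$ is isomorphic to $k(x)$.

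Next I would tensor the given semi-orthogonal triangle $b\rightarrow k(x)\rightarrow a\rightarrow b[1]$ by $L$. Since $-\otimes L$ is an exact autoequivalence of $D(X)$, this produces a distinguished triangle
$$b\otimes L\rightarrow k(x)\otimes L\rightarrow a\otimes L\rightarrow (b\otimes L)[1],$$
and using $k(x)\otimes L\cong k(x)$ we may read this as a triangle $b\otimes L\rightarrow k(x)\rightarrow a\otimes L\rightarrow (b\otimes L)[1]$. Now $X$ is smooth projective, so $D(X)=Perf(X)$, and Lemma \ref{pic0invariant1} together with Remark \ref{tensorinvariant} give $\mathcal{A}\otimes L=\mathcal{A}$ and $\mathcal{B}\otimes L=\mathcal{B}$; hence $b\otimes L\in\mathcal{B}$ and $a\otimes L\in\mathcal{A}$. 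Thus the twisted triangle is again a semi-orthogonal triangle for $k(x)$ with respect to $\langle\mathcal{A},\mathcal{B}\rangle$.

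Finally I would invoke Lemma \ref{unique}: the decomposition triangle of a fixed object with respect to $\langle\mathcal{A},\mathcal{B}\rangle$ is unique up to unique isomorphism, so comparing the two triangles $b\rightarrow k(x)\rightarrow a\rightarrow b[1]$ and $b\otimes L\rightarrow k(x)\rightarrow a\otimes L\rightarrow (b\otimes L)[1]$ yields isomorphisms $b\cong b\otimes L$ and $a\cong a\otimes L$, as claimed. I do not expect a genuine obstacle here; the only points requiring a little care are (i) that the isomorphism $k(x)\otimes L\cong k(x)$ need only hold abstractly for Lemma \ref{unique} to apply, not functorially, and (ii) that the identification $D(X)=Perf(X)$ is exactly what licenses the use of Lemma \ref{pic0invariant1}.
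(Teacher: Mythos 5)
Your argument is exactly the paper's: tensor the triangle by $L$, identify $k(x)\otimes L\cong k(x)$, invoke Lemma \ref{pic0invariant1} (with Remark \ref{tensorinvariant}) to see $b\otimes L\in\mathcal{B}$ and $a\otimes L\in\mathcal{A}$, and then conclude by the uniqueness of Lemma \ref{unique}. Nothing to add; the proposal is correct.
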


\begin{proof}
Tensor with $L$, we get another triangle, $b\otimes L\rightarrow k(x)\rightarrow a\otimes L\rightarrow b\otimes L[1]$. By Lemma \ref{pic0invariant1}, $b\otimes L \in \mathcal{B}$, and $a\otimes L\in \mathcal{A}$. Hence, by uniqueness of the triangle from the semi-orthogonal decomposition, see Lemma \ref{unique}, $b\otimes L\cong b$, $a\otimes L\cong a$
\end{proof}

\section{General theory for the nonexistence of semi-orthogonal decompositions}
\subsection{First generalization}
\begin{prop}\label{sodsupport}(\cite[Thm 3.1]{kawatani2018nonexistence})
  Let $X$ be a smooth projective variety, $Z=\mathsf{Bs}\vert \omega_{X}\vert$. If assume a semi-orthogonal decomposition, $D(X)=\langle \mathcal{A},\mathcal{B}\rangle$, then one of the following is true.
\begin{enumerate}
    \item For any $x\in X\setminus Z$, $k(x)\in \mathcal{A}$. In this case, support of all objects in $\mathcal{B}$ is contained in $Z$.
    \item For any $x\in X\setminus Z$, $k(x)\in \mathcal{B}$. In this case, support of all objects in $\mathcal{A}$ is contained in $Z$.
\end{enumerate}
\end{prop}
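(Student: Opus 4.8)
The plan is to prove the statement in three stages: a \emph{pointwise dichotomy} showing that each skyscraper $k(x)$ with $x\notin Z$ lies entirely in $\mathcal{A}$ or entirely in $\mathcal{B}$; a \emph{globalisation} of this over the connected set $X\setminus Z$; and the \emph{deduction of the support claim}. We may assume $Z=\mathsf{Bs}\vert\omega_X\vert\neq X$ (i.e. $P_g(X)\neq 0$), since otherwise $X\setminus Z=\varnothing$ and both alternatives hold trivially.

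For the pointwise dichotomy, fix a closed point $x\notin Z$ and pick $s\in H^{0}(X,\omega_X)$ with $s(x)\neq 0$; write $D=\{s=0\}\in\vert\omega_X\vert$, so that $x\notin D$. Let $b\to k(x)\xrightarrow{\,L\,}a\xrightarrow{\,w\,}b[1]$, with $b\in\mathcal{B}$ and $a\in\mathcal{A}$, be the decomposition triangle of $k(x)$ for $D(X)=\langle\mathcal{A},\mathcal{B}\rangle$. Since $\operatorname{End}(k(x))=\mathbb{C}$ is local, $k(x)$ is indecomposable, so it is enough to show $w=0$, equivalently $a=0$ or $b=0$. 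Applying $\operatorname{Hom}(-,a)$ and $\operatorname{Hom}(b,-)$ to the triangle and using semi-orthogonality $\operatorname{Hom}^{\bullet}(b,a)=0$ gives isomorphisms $\operatorname{Ext}^{\bullet}(a,a)\cong\operatorname{Hom}^{\bullet}(k(x),a)$ and $\operatorname{Ext}^{\bullet}(b,b)\cong\operatorname{Hom}^{\bullet}(b,k(x))$. If $x\notin\operatorname{Supp}(a)$ then $a$ is acyclic near $x$, so the first group vanishes, forcing $\operatorname{End}(a)=0$ and hence $a=0$; symmetrically, $x\notin\operatorname{Supp}(b)$ forces $b=0$. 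Thus we are reduced to the case $x\in\operatorname{Supp}(a)\cap\operatorname{Supp}(b)$.

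This remaining case is exactly where the hypothesis $x\notin\mathsf{Bs}\vert\omega_X\vert$ is used, and I expect it to be the main obstacle. The idea is to combine Serre duality on $X$ with the section $s$. Serre duality identifies $\operatorname{Hom}(a,b[1])$ with the dual of $\operatorname{Ext}^{\dim X-1}(b,a\otimes\omega_X)$; tensoring the exact sequence $0\to\mathcal{O}_X\xrightarrow{\,s\,}\omega_X\to\omega_X\vert_D\to 0$ by $a$ and using $\operatorname{Hom}^{\bullet}(b,a)=0$ rewrites this as the dual of $\operatorname{Ext}^{\dim X-1}\!\bigl(b,a\otimes^{\mathbf{L}}\omega_X\vert_D\bigr)$, a group governed by the divisor $D$, which misses $x$ (and at $x$ one has $k(x)\otimes\omega_X\cong k(x)$ via $s$, so that the Serre-dual identifications localised at $x$ are available). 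One then argues that the class of $w$ in this group must vanish, so that the decomposition triangle splits; this is the technical heart, and here I would follow the argument of \cite[Theorem 3.1]{kawatani2018nonexistence}.

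Granting the dichotomy, let $G_{\mathcal{A}},G_{\mathcal{B}}$ be classical generators of the admissible subcategories $\mathcal{A},\mathcal{B}$ (obtained by projecting a classical generator of $D(X)$). Then $k(x)\in\mathcal{A}=\mathcal{B}^{\perp}$ iff $\operatorname{Hom}^{\bullet}(G_{\mathcal{B}},k(x))=0$ iff $x\notin\operatorname{Supp}(G_{\mathcal{B}})$, and similarly — using $k(x)\otimes\omega_X\cong k(x)$ and Serre duality — $k(x)\in\mathcal{B}$ iff $x\notin\operatorname{Supp}(G_{\mathcal{A}})$; hence $U_{\mathcal{A}}=\{x:k(x)\in\mathcal{A}\}$ and $U_{\mathcal{B}}=\{x:k(x)\in\mathcal{B}\}$ are open, and they are disjoint since $k(x)\in\mathcal{A}\cap\mathcal{B}$ would give $\operatorname{End}(k(x))=0$. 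By the dichotomy $X\setminus Z\subseteq U_{\mathcal{A}}\sqcup U_{\mathcal{B}}$, and $X\setminus Z$ is irreducible, hence connected, so one of these opens contains all of $X\setminus Z$; say $k(x)\in\mathcal{A}$ for every $x\notin Z$, which is alternative $(1)$ (alternative $(2)$ being symmetric). Finally, for any $E\in\mathcal{B}$ we then have $\operatorname{Hom}^{\bullet}(E,k(x))=0$ for all $x\notin Z$ by semi-orthogonality ($\operatorname{Hom}(\mathcal{B},\mathcal{A})=0$); since $X$ is smooth, $\operatorname{Hom}^{\bullet}(E,k(x))=0$ is equivalent to $x\notin\operatorname{Supp}(E)$, so $\operatorname{Supp}(E)\subseteq Z$, as asserted.
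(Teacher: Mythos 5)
Your three-stage outline is the right shape, and the two ``easy'' cases of the pointwise dichotomy are handled correctly: applying $\operatorname{Hom}(-,a)$ and $\operatorname{Hom}(b,-)$ to the decomposition triangle and using semi-orthogonality does give $\operatorname{Hom}^{\bullet}(a,a)\cong\operatorname{Hom}^{\bullet}(k(x),a)$ and $\operatorname{Hom}^{\bullet}(b,b)\cong\operatorname{Hom}^{\bullet}(b,k(x))$, so $x\notin\operatorname{Supp}(a)$ (resp.\ $x\notin\operatorname{Supp}(b)$) forces $a=0$ (resp.\ $b=0$). However, the ``technical heart'' you then identify and explicitly defer to \cite[Theorem~3.1]{kawatani2018nonexistence} is a real gap, and the sketch you give of how it might go is not the right argument. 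You propose to identify $\operatorname{Hom}(a,b[1])$ with $\operatorname{Ext}^{\dim X-1}\bigl(b,a\otimes\omega_X\vert_D\bigr)^{\vee}$ and then ``argue that the class of $w$ in this group must vanish.'' But the identification is of the \emph{whole} group $\operatorname{Hom}(a,b[1])$, so this strategy would have to conclude $w=0$ outright, which is false in general; and the group $\operatorname{Ext}^{\dim X-1}(b,a\otimes\omega_X\vert_D)$ need not vanish when both $\operatorname{Supp}(a)$ and $\operatorname{Supp}(b)$ meet $D$. What one actually proves is that $w$ becomes zero after restriction to $U=X\setminus Z(s)$, not on all of $X$.

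The correct argument is in fact written out in this very paper, in the proof of Theorem~\ref{generalsodsupport} (and again in Theorem~\ref{pic0sod}), so you do not need to open \cite{kawatani2018nonexistence}. Consider the composition $a\xrightarrow{\,w\,}b[1]\xrightarrow{\,\cdot s\,}b\otimes\omega_X[1]$. By Serre duality and semi-orthogonality,
$\operatorname{Hom}_X(a,b\otimes\omega_X[1])\cong\operatorname{Hom}_X(b,a[\dim X-1])^{\vee}=0$, so $s\cdot w=0$; since $s$ is invertible on $U$, this gives $j^{\ast}w=0$ for $j:U\hookrightarrow X$, hence $j^{\ast}k(x)\cong j^{\ast}a\oplus j^{\ast}b$. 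As $k(x)$ is supported at $x\in U$, $\operatorname{End}_U(j^{\ast}k(x))\cong\operatorname{End}_X(k(x))=\mathbb{C}$, so $j^{\ast}a=0$ or $j^{\ast}b=0$, i.e.\ $\operatorname{Supp}(a)\subseteq Z(s)$ or $\operatorname{Supp}(b)\subseteq Z(s)$. Since $x\in U$, the case $x\in\operatorname{Supp}(a)\cap\operatorname{Supp}(b)$ that you flag as the main obstacle simply never occurs, and you are back in the two cases you already settled. Your globalization (openness of $\{x:k(x)\in\mathcal{A}\}$ and $\{x:k(x)\in\mathcal{B}\}$, disjointness, connectedness of $X\setminus Z$) and the final support containment are correct; the only small caveat is that obtaining classical generators of $\mathcal{A}$ and $\mathcal{B}$ by projecting a generator of $D(X)$ implicitly uses the projection functors, which for the weak semi-orthogonal decompositions that are the paper's default deserve a word of justification.
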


We give a generalization of the Proposition \ref{sodsupport} for objects which are not skyscraper sheaves.
\begin{thm}\label{generalsodsupport}
  Let $X$ be a smooth projective variety, and a semi-orthogonal decomposition, $D(X)=\langle\mathcal{A},\mathcal{B}\rangle$. Let $E$ be an object supported in a closed subset $Z$ such that $Hom(E,E)=k$. Suppose $\exists$ a section $s$ $\in H^{0}(X,\omega_{X})$ such that $Z\cap Z(s)= \varnothing$. Then $E\in \mathcal{A}$ or $\mathcal{B}$. Note that $k(x)$ with $x\in X\setminus \mathsf{Bs}\vert\omega_{X}\vert$ satisfies the assumption.
\end{thm}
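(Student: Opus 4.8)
The plan is to run, for this more general object $E$, the argument behind Proposition \ref{sodsupport}. Write the decomposition as $D(X)=\langle\mathcal{A},\mathcal{B}\rangle$ and let
$$b\longrightarrow E\longrightarrow a\longrightarrow b[1]$$
be the decomposition triangle of $E$, with $b\in\mathcal{B}$ and $a\in\mathcal{A}$. Semi-orthogonality gives $\mathrm{Hom}^{\bullet}(b,a)=0$, and feeding this into the long exact sequences obtained by applying $\mathrm{Hom}(b,-)$ and $\mathrm{Hom}(-,a)$ to the triangle yields isomorphisms $\mathrm{Hom}^{\bullet}(b,b)\cong\mathrm{Hom}^{\bullet}(b,E)$ and $\mathrm{Hom}^{\bullet}(a,a)\cong\mathrm{Hom}^{\bullet}(E,a)$. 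Consequently it suffices to prove that $a=0$ or $b=0$, for then $E\cong a\in\mathcal{A}$ or $E\cong b\in\mathcal{B}$.

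I would next extract two elementary consequences of the hypotheses. First, since $s$ is nowhere zero on $Z\supseteq\mathrm{supp}(E)$, the cone of $E\otimes(\mathcal{O}_X\xrightarrow{s}\omega_X)$ is supported on $\mathrm{supp}(E)\cap Z(s)=\varnothing$; hence $s$ trivialises $\omega_X$ on $U=X\setminus Z(s)\supseteq\mathrm{supp}(E)$ and induces an isomorphism $E\cong E\otimes\omega_X$, and since for any $F$ the complex $\mathbf{R}\mathcal{H}om(E,F)$ (resp.\ $\mathbf{R}\mathcal{H}om(F,E)$) is supported on $\mathrm{supp}(E)\subseteq U$, we get $\mathrm{Hom}^{\bullet}(E,F)\cong\mathrm{Hom}^{\bullet}(E,F\otimes\omega_X)$ and $\mathrm{Hom}^{\bullet}(F,E)\cong\mathrm{Hom}^{\bullet}(F,E\otimes\omega_X)$. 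Second, $\mathsf{Bs}|\omega_X|=\bigcap_{s'\in H^0(\omega_X)}Z(s')\subseteq Z(s)$, so $\mathrm{supp}(E)\cap\mathsf{Bs}|\omega_X|=\varnothing$. With $n=\dim X$, Serre duality on $X$ combined with the $\omega_X$-twist invariance above then computes the two ``missing'' $\mathrm{Hom}$-complexes:
$$\mathrm{Hom}^{i}(a,E)\cong\mathrm{Hom}^{\,n-i}(E,a\otimes\omega_X)^{\vee}\cong\mathrm{Hom}^{\,n-i}(a,a)^{\vee},\qquad \mathrm{Hom}^{i}(E,b)\cong\mathrm{Hom}^{\,n-i}(b,b)^{\vee}.$$

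The heart of the matter is to show that the connecting morphism $a\to b[1]$ vanishes, i.e.\ $\mathrm{Hom}^{1}(a,b)=0$; granting this, the triangle splits as $E\cong a\oplus b$, and the hypothesis $\mathrm{Hom}(E,E)=k$ forces $\mathrm{id}_{a}=0$ or $\mathrm{id}_{b}=0$, i.e.\ $a=0$ or $b=0$. This splitting is the one genuinely delicate point. The way I would settle it is via Proposition \ref{sodsupport} applied to $Z_{0}=\mathsf{Bs}|\omega_X|$: after possibly interchanging $\mathcal{A}$ and $\mathcal{B}$, every object of $\mathcal{B}$ — in particular $b$ — is supported on $Z_{0}$, which we have just seen is disjoint from $\mathrm{supp}(E)$. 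Then $\mathbf{R}\mathcal{H}om_{\mathcal{O}_X}(b,E)$ has vanishing stalk at every point, so $\mathrm{Hom}^{\bullet}(b,E)=0$, and the isomorphism $\mathrm{Hom}^{\bullet}(b,b)\cong\mathrm{Hom}^{\bullet}(b,E)$ from the first paragraph gives $\mathrm{id}_b=0$, hence $b=0$ and $E\cong a\in\mathcal{A}$; the interchanged case gives $E\in\mathcal{B}$ symmetrically.

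Thus the genuine obstacle is the splitting of the decomposition triangle, and everything else is bookkeeping with the long exact sequences of the triangle, Serre duality, and the fact that $\mathrm{Hom}^{\bullet}$ into or out of $E$ depends only on a neighbourhood of $\mathrm{supp}(E)$. I note that the appeal to the support clause of Proposition \ref{sodsupport} in fact short-circuits the Serre-duality computation entirely; I have nonetheless recorded that computation because it is the route one must take if one wants an argument that does not quote the support clause, and there the vanishing of $\mathrm{Hom}^{1}(a,b)$ is precisely the crux — again resolved by the observation that one of the two pieces of the decomposition is supported inside $\mathsf{Bs}|\omega_X|$, where $\omega_X$-twists of the relevant $\mathrm{Hom}$-complexes may be discarded.
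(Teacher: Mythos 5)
Your proof is correct, but it takes a genuinely different route from the paper's. The paper's argument is self-contained and runs parallel to the skyscraper case: compose $a\to b[1]$ with the section $s$ to get a map into $b[1]\otimes\omega_X$, kill it by Serre duality and semi-orthogonality, pull the whole triangle back to $U=X\setminus Z(s)$ to get $j^*E\cong j^*a\oplus j^*b$, and then use $\mathrm{Hom}(E,E)=k$ (via $E\cong Rj_*j^*E$) to force one summand to vanish, followed by a further step to kill the corresponding global object. You instead treat Proposition~\ref{sodsupport} as a black box: its support clause says that, up to swapping $\mathcal{A}$ and $\mathcal{B}$, every object of $\mathcal{B}$ is supported inside $\mathsf{Bs}|\omega_X|\subseteq Z(s)$, which is disjoint from $\mathrm{supp}(E)$; hence $R\mathcal{H}om(b,E)=0$, so $\mathrm{Hom}^\bullet(b,E)=0$, and the long exact sequence (using only $\mathrm{Hom}^\bullet(b,a)=0$) then gives $\mathrm{Hom}^\bullet(b,b)=0$, i.e.\ $b=0$ and $E\cong a$. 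Two remarks. First, your argument never uses the hypothesis $\mathrm{Hom}(E,E)=k$, and it only needs $\mathrm{supp}(E)\cap\mathsf{Bs}|\omega_X|=\varnothing$ rather than a single section whose zero locus misses all of $\mathrm{supp}(E)$; in this sense your route yields a stronger statement (at the cost of quoting the support clause of Proposition~\ref{sodsupport}, whereas the paper's proof is self-contained and is the template that later generalizes to Theorem~\ref{pic0sod}). Second, your framing that ``the heart of the matter is $\mathrm{Hom}^1(a,b)=0$'' is slightly off: neither proof establishes that Ext group vanishes a priori (it is only true a posteriori once $a$ or $b$ is $0$), and the paper's proof only shows the connecting map dies after restriction to $U$; but since your final argument bypasses this framing and proceeds via the support clause, this is a cosmetic point, not a gap. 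Your Serre-duality computations are correct but, as you note, not used in the argument you actually give.
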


\begin{proof}
 Consider a triangle $b\rightarrow E\rightarrow a\rightarrow b[1]$ corresponding to the semi-orthogonal decomposition. By assumption, we can choose a section $s$ such that $Z(s)\cap Z=\varnothing$. Write $U=X\setminus Z(s)$, $j:U \hookrightarrow X$. Then compose the section $s$ with morphism $a\rightarrow b[1]$, we get a morphism $a\rightarrow b\otimes \omega_{X}[1]$. But according to the semi-orthogonality, $Hom_{X}(a,b\otimes \omega_{X}[1])\cong Hom_{X}(b,a[\dim X-1])^{\vee}= 0$.
 Hence the morphism $a\rightarrow b[1]$ pulling back to $U$ becomes zero because $s|_{U}$ is invertible. Now pulling back the triangle to $\mathsf{D}^{\mathsf{b}}(U)$, we have $j^{\ast}b\rightarrow j^{\ast}E\rightarrow j^{\ast}a\rightarrow j^{\ast}b[1]$, hence $j^{\ast}E\cong j^{\ast}a\oplus j^{\ast}b$. By adjunctions, we have $Hom_{U}(j^{\ast}E,j^{\ast}E)\cong Hom_{X}(E,Rj_{\ast}j^{\ast}E)$.
 Since $Z\cap Z(s)=\varnothing$, the support of $E$ is contained in $U$. There is a triangle in $D_{qch}(X)$,
 $$R\underline{\Gamma_{Z(s)}}E\rightarrow E\rightarrow Rj_{\ast}j^{\ast}E \rightarrow R\underline{\Gamma_{Z(s)}}[1].$$
 Since $R\underline{\Gamma_{Z(s)}}E= 0$, hence $E\cong Rj_{\ast}j^{\ast}E$. Therefore,
 $$Hom_{U}(j^{\ast}E,j^{\ast}E)\cong Hom_{X}(E,E)\cong k.$$
 Back to equality $j^{\ast}E\cong j^{\ast}a\oplus j^{\ast}b$, the equation above implies that $j^{\ast}a\cong 0$ or $j^{\ast}b\cong 0$. Suppose $j^{\ast}a\cong 0$, it implies that the support of $a$ is in $Z(s)$. Clearly $R\mathcal{H}om(E,a)\cong 0$. Applying $R\Gamma$ and taking homology, we have $Hom^{\ast}_{X}(E,a)\cong 0$, which implies $b\cong a[-1]\oplus E$. Therefore $a\cong 0$, and $b\cong E$. Suppose $j^{\ast}b\cong 0$, then $E\cong a$ follows from the same reason.
\end{proof}

 \subsection{Generalization to the intersection of base loci of line bundles that are isomorphic to the canonical bundle in N\'{e}ron-severi group}

 \begin{thm}\label{pic0sod}
Let $X$ be a smooth projective variety, $Z=\mathsf{Bs}\vert \omega_{X}\otimes L\vert$, $L\in Pic^{0}(X)$. If assume a semi-orthogonal decomposition, $D(X)=\langle \mathcal{A},\mathcal{B}\rangle$, then one of the following is true.

\begin{enumerate}
    \item For any $x\in X\setminus Z$, $k(x)\in \mathcal{A}$. In this case, support of all objects in $\mathcal{B}$ is contained in $Z$.
    \item For any $x\in X\setminus Z$, $k(x)\in \mathcal{B}$. In this case, support of all objects in $\mathcal{A}$ is contained in $Z$.
\end{enumerate}
 \end{thm}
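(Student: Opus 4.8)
The plan is to imitate the proof of Proposition~\ref{sodsupport} (equivalently \cite[Thm 3.1]{kawatani2018nonexistence}), replacing the role of $\omega_X$ by $\omega_X \otimes L$ throughout, and noting that the only properties of $\omega_X$ actually used are the Serre duality pairing and the fact that sections of $\omega_X$ cut out the base locus. First I would fix $L \in Pic^0(X)$ and take a skyscraper sheaf $k(x)$ with $x \notin Z = \mathsf{Bs}|\omega_X \otimes L|$. Consider the semi-orthogonal triangle $b \to k(x) \to a \to b[1]$ with $b \in \mathcal{B}$, $a \in \mathcal{A}$. Choose a section $s \in H^0(X, \omega_X \otimes L)$ not vanishing at $x$, set $U = X \setminus Z(s)$ and $j : U \hookrightarrow X$. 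Composing $a \to b[1]$ with $s$ gives $a \to b \otimes \omega_X \otimes L[1]$, and by semi-orthogonality together with Serre duality,
\[
\mathrm{Hom}_X(a, b \otimes \omega_X \otimes L[1]) \cong \mathrm{Hom}_X(b \otimes L, a[\dim X - 1])^{\vee}.
\]
Here is where $L \in Pic^0(X)$ is used crucially: by Lemma~\ref{pic0invariant1} and Remark~\ref{tensorinvariant}, tensoring by $L$ preserves $\mathcal{B}$, so $b \otimes L \in \mathcal{B}$ and the right-hand side vanishes by semi-orthogonality. Hence the connecting map $a \to b[1]$ restricts to zero on $U$ (since $s|_U$ is invertible there), so $j^*k(x) \cong j^*a \oplus j^*b$.

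Next I would run the same locality argument as in Theorem~\ref{generalsodsupport}: since $x \in U$, the skyscraper $k(x)$ satisfies $R\underline{\Gamma_{Z(s)}}k(x) = 0$, so $k(x) \cong Rj_* j^* k(x)$ and
\[
\mathrm{Hom}_U(j^* k(x), j^* k(x)) \cong \mathrm{Hom}_X(k(x), k(x)) \cong k.
\]
Because $j^* k(x) \cong j^* a \oplus j^* b$ has a one-dimensional endomorphism algebra, one of $j^*a$, $j^*b$ must vanish. If $j^*a \cong 0$ then $\mathrm{supp}(a) \subseteq Z(s)$, so $R\mathcal{H}om(k(x), a) = 0$ and thus $\mathrm{Hom}^*_X(k(x), a) = 0$; feeding this back into the triangle forces $a \cong 0$ and $k(x) \cong b \in \mathcal{B}$. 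Symmetrically $j^*b \cong 0$ gives $k(x) \cong a \in \mathcal{A}$. So for each individual $x \notin Z$, $k(x)$ lands in $\mathcal{A}$ or in $\mathcal{B}$.

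Then I would globalize the dichotomy: define $X_{\mathcal{A}} = \{x \in X \setminus Z : k(x) \in \mathcal{A}\}$ and $X_{\mathcal{B}}$ similarly; these are disjoint and cover $X \setminus Z$, so by connectedness of $X \setminus Z$ (a nonempty Zariski-open subset of the irreducible variety $X$ — note $Z$ is a proper closed subset since $x \notin Z$) it suffices to show each is open, or more simply to invoke the corresponding step in \cite{kawatani2018nonexistence}. Concretely, if $k(x_0) \in \mathcal{B}$ then for $b' \in \mathcal{B}$ one has $\mathrm{Hom}^*(b', k(x_0)) = 0$ is impossible unless\ldots{} — the cleaner route is: the set where $k(x) \in \mathcal{A}$ is open because $a$ varies in a flat family and its support is closed; I would just cite the argument of Proposition~\ref{sodsupport} verbatim since nothing changes. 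Finally, in the case $k(x) \in \mathcal{A}$ for all $x \notin Z$: for any $E \in \mathcal{B}$ and any $x \notin Z$ we get $\mathrm{Hom}^*_X(k(x), E[*]) = 0$ by semi-orthogonality, which forces $x \notin \mathrm{supp}(E)$, hence $\mathrm{supp}(E) \subseteq Z$; symmetrically in the other case.

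I expect the one genuinely new ingredient — and the only place the argument differs from \cite{kawatani2018nonexistence} — to be the invocation of $\mathcal{B} \otimes L = \mathcal{B}$ (Lemma~\ref{pic0invariant1}, Remark~\ref{tensorinvariant}) to kill $\mathrm{Hom}_X(b \otimes L, a[\dim X - 1])$; everything else is a routine transcription. The main obstacle, such as it is, is bookkeeping: making sure the Serre-duality twist produces exactly $b \otimes L$ on the dual side (not $b \otimes L^{-1}$), which is why it matters that $Pic^0(X)$ is a group so the distinction is immaterial anyway. The globalization/connectedness step and the support statements are then formal, identical to the cited proposition.
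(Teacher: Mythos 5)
Your proof is correct and follows essentially the same route as the paper: choose a section of $\omega_X \otimes L$ not vanishing at $x$, use Serre duality plus the $Pic^0$-invariance of $\mathcal{A}$, $\mathcal{B}$ to kill the connecting map, restrict to the open complement of the zero locus, and conclude via simplicity of $k(x)$. The only difference is cosmetic: the paper invokes the stronger Lemma~\ref{pic0invariant2} ($b \otimes L \cong b$) where you use only $b \otimes L \in \mathcal{B}$, which is exactly the simplification the paper itself records in Remark~\ref{trickz}.
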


 \begin{proof}
 Take $x\in X\setminus Z$, we prove that $k(x)\in \mathcal{A}\ or\ \mathcal{B}$.
 Take a section $s\in \Gamma(X, \omega_{X}\otimes L)$ such that $s(x)\neq 0$. Consider the triangle $b\rightarrow k(x)\rightarrow a\rightarrow b[1]$ from the semi-orthogonal decomposition. Consider the composition of $a \rightarrow b[1]$ and  $b[1]\rightarrow b[1]\otimes\omega_{X}\otimes L$ via section $s$. According to Lemma \ref{pic0invariant2}, $b\otimes L\cong b$, hence the composition morphism $a\rightarrow b[1]\otimes \omega_{X}\otimes L$ is zero by Serre duality and semi-orthogonality. Define $U=X\setminus Z(s)$, write $j:U\hookrightarrow X$.
 Then pulling pack to $j$, there is a new triangle
 $$j^{\ast}b\rightarrow k(x)\rightarrow j^{\ast}a\rightarrow j^{\ast}b[1].$$
 But since section $s$ is invertible in $U$, the morphism $j^{\ast}a\rightarrow j^{\ast}b[1]$ is trivial. Therefore, $k(x)\cong j^{\ast}a\oplus j^{\ast}b$. Since $k(x)$ is simple, $j^{\ast}a=0$, or $j^{\ast}b=0$. Without loss of generality, we assume $j^{\ast}a\cong 0$, which implies that support of $a$ is in $Z(s)$. However, since $x\in U$, the morphism $k(x)[m]\rightarrow a[m]$ must be zero. Then, $b\cong a[-1]\oplus k(x)$. Thus $a=0$.
 \par
 The support statement follows easily from the statement of the skyscraper sheaves since the skyscraper sheaves can only belong to exactly one component, see proof of Theorem 1.2\cite[Theorem 1.2]{kawatani2018nonexistence}.
 \end{proof}

 \begin{rem}\ \label{trickz}
 \begin{enumerate}
     \item In fact, we don't need the fact $\mathcal{B}\otimes L\subset \mathcal{B}$ to prove our theorem. Since $a\otimes L^{-1}\in \mathcal{A}$, we have
     $$Hom^{\ast}(a, b\otimes \omega_{X}\otimes L)\cong Hom^{\ast}(a\otimes L^{-1}, b\otimes \omega_{X})=0.$$
     We thank Shizhuo Zhang for pointing out this simpler proof to the author.
     \item One may wonder to generalize Theorem \ref{generalsodsupport}. We need to assume that $E$ is  preserved by the $Pic^{0}(X)$.
 \end{enumerate}

 \end{rem}

 \begin{cor}\label{refinesod}
Let $X$ be a smooth projective variety, define the base points of the paracanonical systems, $$Z=\mathsf{PBs}\vert\omega_{X}\vert:=\cap_{L\in Pic^{0}(X)}\mathsf{Bs}\vert \omega_{X}\otimes L\vert.$$
If assume a semi-orthogonal decomposition, $D(X)=\langle \mathcal{A},\mathcal{B}\rangle$, then one of the following is true.

\begin{enumerate}
    \item For any $x\in X\setminus Z$, $k(x)\in \mathcal{A}$. In this case, support of all objects in $\mathcal{B}$ is contained in $Z$.
    \item For any $x\in X\setminus Z$, $k(x)\in \mathcal{B}$. In this case, support of all objects in $\mathcal{A}$ is contained in $Z$.
\end{enumerate}

 \end{cor}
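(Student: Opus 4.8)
The plan is to deduce the Corollary from Theorem~\ref{pic0sod}, which already establishes the dichotomy for a \emph{single} $L\in Pic^{0}(X)$; the task is to show that the chosen alternative does not vary with $L$, after which the intersection $Z=\mathsf{PBs}\vert\omega_{X}\vert$ is handled by taking a union of open sets. For $L\in Pic^{0}(X)$ set $U_{L}:=X\setminus\mathsf{Bs}\vert\omega_{X}\otimes L\vert$, so that $X\setminus Z=\bigcup_{L\in Pic^{0}(X)}U_{L}$. By Theorem~\ref{pic0sod}, for each $L$ with $U_{L}\neq\varnothing$ exactly one of the following holds: $k(x)\in\mathcal{A}$ for all $x\in U_{L}$, or $k(x)\in\mathcal{B}$ for all $x\in U_{L}$; call this the \emph{type} of $L$. (It is unambiguous because $k(x)$ cannot lie in $\mathcal{A}\cap\mathcal{B}$: otherwise $Hom(k(x),k(x))$ would be a morphism space from an object of $\mathcal{B}$ to an object of $\mathcal{A}$, hence zero, contradicting $Hom(k(x),k(x))=k$.)

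The first step is the elementary observation that each $U_{L}$ is either empty --- precisely when $h^{0}(X,\omega_{X}\otimes L)=0$ --- or a dense Zariski-open subset of the irreducible variety $X$, since then $\mathsf{Bs}\vert\omega_{X}\otimes L\vert$ is contained in the zero locus of any one nonzero section. The key step follows: if $L_{1},L_{2}$ both have nonempty $U_{L_{i}}$, then $U_{L_{1}}\cap U_{L_{2}}\neq\varnothing$ because two dense opens of an irreducible space meet; choosing $x$ in this common locus, $k(x)$ lies in exactly one of $\mathcal{A},\mathcal{B}$, forcing $L_{1}$ and $L_{2}$ to have the same type. Hence all $L$ with $U_{L}\neq\varnothing$ share a common type.

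It then remains to conclude. Suppose the common type is $\mathcal{A}$ (the type-$\mathcal{B}$ case is symmetric and yields alternative (2); if every $U_{L}$ is empty then $Z=X$ and both alternatives hold vacuously). For any $x\in X\setminus Z$, choose $L$ with $x\in U_{L}$; since $U_{L}\neq\varnothing$ the bundle $L$ is of type $\mathcal{A}$, so $k(x)\in\mathcal{A}$, which is alternative (1). The remaining support assertion is imported verbatim from \cite[proof of Theorem 1.2]{kawatani2018nonexistence}: in case (1), for $E\in\mathcal{B}$ and $x\notin Z$ semi-orthogonality gives $Hom^{\ast}_{X}(E,k(x))=0$, and since the support of $E$ is exactly $\{x:Hom^{\ast}_{X}(E,k(x))\neq 0\}$ we get $\mathrm{Supp}(E)\subseteq Z$; case (2) is the mirror argument, detecting support through $Hom^{\ast}_{X}(k(x),E)$ via Serre duality on the smooth $X$. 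I do not expect a genuine obstacle here: the entire content is the uniformity of the alternative over $Pic^{0}(X)$, which the density/intersection argument dispatches, and the only thing to watch is the harmless degenerate case in which $\omega_{X}\otimes L$ is sectionless for every $L$.
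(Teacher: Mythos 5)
Your proposal is correct and follows the same route as the paper: invoke Theorem~\ref{pic0sod} for each $L\in Pic^{0}(X)$ with $x\notin\mathsf{Bs}\vert\omega_{X}\otimes L\vert$, then glue the per-$L$ dichotomies. The paper's own proof of Corollary~\ref{refinesod} is a one-liner that simply cites Theorem~\ref{pic0sod} and asserts ``the statements of the Corollary follows,'' deferring the uniformity-across-$L$ and the support assertion to the arguments of \cite[Theorem~1.2]{kawatani2018nonexistence}; you make explicit what is left implicit there, namely that the complements $U_{L}$ of the base loci are dense opens in the irreducible $X$ whenever nonempty, so any two of them meet and the ``type'' (whether skyscrapers land in $\mathcal{A}$ or in $\mathcal{B}$) cannot switch. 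That density argument is exactly the kind of connectedness reasoning already used in Kawatani--Okawa's Theorem~1.2, so there is no genuine difference in method, only in how much is written out. The degenerate case you flag ($U_{L}=\varnothing$ for all $L$, i.e.\ $Z=X$) is handled correctly, and your identification of $\mathrm{Supp}(E)$ with $\{x:Hom^{\ast}_{X}(E,k(x))\neq 0\}$ on the smooth $X$ is the right way to extract the support statement from the skyscraper dichotomy.
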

\begin{proof}
Let $x\notin Z$, then there is a $L\in Pic^{0}(X)$ such that $x\notin \mathsf{Bs}\vert \omega_{X}\otimes L\vert$, according to Theorem \ref{pic0sod}, $k(x)$ belongs to $\mathcal{A}$ or $\mathcal{B}$, the statements of the Corollary follows.
\end{proof}
 \begin{rem}\
 \begin{enumerate}
     \item The base locus of algebraic class [$\omega_{X}$] in N\'{e}ron-Severi group  $Pic(X)/Pic^{0}(X)$ would be counted to the problem of semi-orthogonal decomposition. Thus, if we have good knowledge of N\'{e}ron-Severi group and the class [$\omega_{X}$], it is more possible to determine whether the derived categories have nontrivial semi-orthogonal decomposition.
     \item We point out that it is impossible to generalize to $Num(X)$ in a naive way. For example, consider the Enriques surface $X$. $\omega_{X}$ is numerically trivial, but $D(X)$ admits a nontrivial semi-orthogonal decomposition. However, it is possible if we first have the intersection base locus of line bundles of canonical class in $NS(X)$ is a proper closed subset.
 \end{enumerate}
 \end{rem}

 \begin{eg}
 Let $A$ be an abelian vareity. Then $Pic^{0}(A)$ is exactly the translation invariant line bundle in $A$. Let $L\in Pic^{0}(A)$ which is not trivial, we have
 $H^{0}(A,L)=0$. Hence $\mathsf{Bs}\vert\omega_{A}\vert=\varnothing$, but $\mathsf{Bs}\vert L\vert=A$. The base locus in an algebraic class can change.
 \end{eg}

 \begin{thm}\label{emptynosod}
Let $X$ be a smooth projective variety, $Z=\mathsf{PBs}\vert\omega_{X}\vert$. Suppose $Z=\varnothing$ or $\dim Z=0$, then $D(X)$ have no nontrivial semi-orthogonal decompositions.
 \end{thm}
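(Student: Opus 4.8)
The plan is to deduce the statement from Corollary~\ref{refinesod} together with the principle that, over a finite locus, semi-orthogonality is forced to be \emph{complete} orthogonality. So assume $D(X)=\langle\mathcal{A},\mathcal{B}\rangle$ is a semi-orthogonal decomposition. By Corollary~\ref{refinesod}, after renaming we may take $\mathcal{B}$ to be the component all of whose objects have support contained in $Z=\mathsf{PBs}\vert\omega_{X}\vert$. If $Z=\varnothing$ then an object supported on the empty set is $0$, so $\mathcal{B}=0$ and the decomposition is trivial. Hence assume $\dim Z=0$, i.e.\ $Z=\{x_{1},\dots,x_{m}\}$ is a finite set of closed points; we may also assume $\dim X\geq 1$, since for $\dim X=0$ the variety $X$ is a point, for which $D(X)$ has no nontrivial semi-orthogonal decomposition. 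The whole content is then to prove $\mathcal{B}=0$.

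First I would upgrade the semi-orthogonality to complete orthogonality. If $b\in\mathcal{B}$ then $b$ is supported on the finite set $Z$, and $\omega_{X}^{\pm 1}$ is trivial on a suitable open neighbourhood $U$ of $Z$, so $b\otimes\omega_{X}^{\pm 1}\cong b$ (concretely $b\cong Rj_{\ast}(b\vert_{U})$ for $j\colon U\hookrightarrow X$, and $\omega_{X}^{\pm1}\vert_{U}\cong\mathcal{O}_{U}$). Writing $n=\dim X$ and combining this with Serre duality $Hom_{X}(F,G)\cong Hom_{X}(G,F\otimes\omega_{X}[n])^{\vee}$, the semi-orthogonality present in $\langle\mathcal{A},\mathcal{B}\rangle$ becomes two-sided: for instance, from $Hom^{\ast}_{X}(\mathcal{B},\mathcal{A})=0$ one gets, for all $a\in\mathcal{A}$, $b\in\mathcal{B}$, $k\in\mathbb{Z}$,
\[
Hom_{X}(a,b[k])\cong Hom_{X}(b,a\otimes\omega_{X}[n-k])^{\vee}\cong Hom_{X}(b\otimes\omega_{X}^{-1},a[n-k])^{\vee}\cong Hom_{X}(b,a[n-k])^{\vee}=0,
\]
and the mirror computation (moving $\omega_{X}$ onto $b$) handles the other labelling of the two components. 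Thus $Hom^{\ast}_{X}(\mathcal{A},\mathcal{B})=Hom^{\ast}_{X}(\mathcal{B},\mathcal{A})=0$; in the triangle $b\to E\to a\to b[1]$ attached to any $E\in D(X)$ the boundary map $a\to b[1]$ then vanishes, so $E\cong a\oplus b$ and $D(X)\simeq\mathcal{A}\times\mathcal{B}$ as triangulated categories.

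Finally I would invoke the indecomposability of $D(X)$ for the connected variety $X$. The object $\mathcal{O}_{X}$ has $End(\mathcal{O}_{X})=H^{0}(X,\mathcal{O}_{X})=\mathbb{C}$, hence is indecomposable, so it lies entirely in $\mathcal{A}$ or in $\mathcal{B}$; it cannot lie in $\mathcal{B}$ because its support is all of $X$ while $\dim X\geq 1$, so $\mathcal{O}_{X}\in\mathcal{A}$, and therefore $R\Gamma(X,b)=Hom^{\ast}_{X}(\mathcal{O}_{X},b)=0$ for every $b\in\mathcal{B}$. But a nonzero $b\in\mathcal{B}$ splits as $\bigoplus_{i}b_{i}$ with $b_{i}$ supported at the distinct point $x_{i}$ (local cohomology at the $x_{i}$ giving orthogonal idempotents on complexes supported on $Z$); choosing $b_{i}\neq 0$ and its lowest nonvanishing cohomology sheaf $\mathcal{H}^{\ell}(b_{i})$ — a nonzero finite-length sheaf, hence with $H^{0}(X,\mathcal{H}^{\ell}(b_{i}))\neq 0$ and no higher cohomology — a truncation argument produces an injection $H^{0}(X,\mathcal{H}^{\ell}(b_{i}))\hookrightarrow H^{\ell}(X,b_{i})$, so $R\Gamma(X,b)\neq 0$, a contradiction. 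Hence $\mathcal{B}=0$ and the decomposition is trivial.

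I expect the step requiring the most care to be the passage from semi-orthogonality to complete orthogonality — the isomorphism $b\otimes\omega_{X}^{\pm1}\cong b$ for $b$ supported on the finite locus $Z$, together with its bookkeeping through Serre duality and the two possible labellings of $\mathcal{A}$ and $\mathcal{B}$. Everything else is either formal (the splitting of triangles in a completely orthogonal decomposition) or a standard fact about $D(X)$ of a connected smooth projective variety (its indecomposability, and the splitting of complexes with finite support into contributions at the individual points).
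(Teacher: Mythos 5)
Your proof is correct and follows essentially the same route as the paper's: reduce via Corollary~\ref{refinesod} to the case where one component is supported on $Z$, observe that objects supported on a zero-dimensional $Z$ are preserved by tensoring with $\omega_{X}^{\pm1}$, use Serre duality to upgrade semi-orthogonality to full orthogonality, and then conclude from the indecomposability of $D(X)$ for connected $X$. The only difference is one of exposition: the paper dispatches the final step with the single sentence ``hence must be trivial since $X$ is connected,'' whereas you spell it out via $\mathcal{O}_{X}$ being indecomposable and $R\Gamma(X,b)\neq 0$ for any nonzero $b$ supported on a finite set.
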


 \begin{proof}
 Let $D(X)=\langle \mathcal{A},\mathcal{B}\rangle$, then according to Corollary \ref{refinesod}, we can assume that all objects of $\mathcal{A}$ support in $Z$.
 If $Z=\varnothing$, then $\mathcal{A}=0$.
 \par
 If $\dim Z=0$, then, $\mathcal{A}\otimes \omega_{X}=\mathcal{A}$. By semi-orthogonality and Serre duality, the original semi-orthogonal decomposition is an orthogonal decomposition, hence must be trivial since $X$ is connected.
 \end{proof}

 \begin{thm}\label{product1}
 Let $X_{i}$ be a finite collection of the varieties with the property that
 $\mathsf{PBs}\vert \omega_{X_{i}}\vert=\varnothing$. Let $Y=X_{1}\times X_{2}\times \cdots \times X_{n}$. Then $D(Y)$ have no nontrivial semi-orthogonal decompositions.
 \end{thm}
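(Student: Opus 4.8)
The plan is to reduce to Theorem~\ref{emptynosod}: it suffices to show that $\mathsf{PBs}\vert\omega_Y\vert=\varnothing$, since $Y$ is again a smooth projective variety. Write $\pi_i\colon Y\to X_i$ for the projections. Then $\Omega^1_Y=\bigoplus_i\pi_i^\ast\Omega^1_{X_i}$ gives $\omega_Y\cong\bigotimes_{i}\pi_i^\ast\omega_{X_i}=\omega_{X_1}\boxtimes\cdots\boxtimes\omega_{X_n}$; and for every $i$ and every $L_i\in Pic^0(X_i)$ the pullback $\pi_i^\ast L_i$ lies in $Pic^0(Y)$, because $\pi_i^\ast$ carries algebraically trivial line bundles to algebraically trivial ones. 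Hence for any choice of $L_i\in Pic^0(X_i)$ the external product $L:=\bigotimes_i\pi_i^\ast L_i$ lies in $Pic^0(Y)$, and $\omega_Y\otimes L=(\omega_{X_1}\otimes L_1)\boxtimes\cdots\boxtimes(\omega_{X_n}\otimes L_n)$. (I will not even need the converse, that $Pic^0(Y)$ equals $\prod_i Pic^0(X_i)$.)

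The key step is the description of the base locus of an external tensor product: for line bundles $M_i$ on $X_i$,
\[
\mathsf{Bs}\bigl\vert M_1\boxtimes\cdots\boxtimes M_n\bigr\vert=\bigcup_{i=1}^{n}\pi_i^{-1}\bigl(\mathsf{Bs}\vert M_i\vert\bigr).
\]
I would obtain this from the Künneth isomorphism $H^0(Y,M_1\boxtimes\cdots\boxtimes M_n)\cong\bigotimes_i H^0(X_i,M_i)$: the global sections are spanned by the decomposable ones $s_1\boxtimes\cdots\boxtimes s_n$, and such a section is nonzero at a point $y=(x_1,\dots,x_n)$ precisely when $s_i(x_i)\neq 0$ for every $i$; therefore $y$ is \emph{not} a base point iff for each $i$ there is an $s_i$ with $s_i(x_i)\neq0$, i.e.\ iff $x_i\notin\mathsf{Bs}\vert M_i\vert$ for all $i$. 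If one prefers to avoid invoking Künneth for an $n$-fold product at once, the same statement follows by induction on $n$, grouping $Y=X_1\times(X_2\times\cdots\times X_n)$ and using the two-factor case.

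Granting this, the conclusion is immediate. Fix an arbitrary point $y=(x_1,\dots,x_n)\in Y$. Since $\mathsf{PBs}\vert\omega_{X_i}\vert=\varnothing$, for each $i$ there is $L_i\in Pic^0(X_i)$ with $x_i\notin\mathsf{Bs}\vert\omega_{X_i}\otimes L_i\vert$. Putting $L=\bigotimes_i\pi_i^\ast L_i\in Pic^0(Y)$, the displayed formula applied to $M_i=\omega_{X_i}\otimes L_i$ gives $y\notin\mathsf{Bs}\vert\omega_Y\otimes L\vert$, hence $y\notin\mathsf{PBs}\vert\omega_Y\vert$. As $y$ was arbitrary, $\mathsf{PBs}\vert\omega_Y\vert=\varnothing$, and Theorem~\ref{emptynosod} finishes the proof.

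The only point that requires genuine care is the base-locus formula in the second paragraph — specifically, checking that the Künneth isomorphism on global sections is compatible with evaluation at a point, so that the nonvanishing of $s_1\boxtimes\cdots\boxtimes s_n$ at $y$ is really equivalent to the factorwise conditions $s_i(x_i)\neq0$. Everything else is formal manipulation of products, canonical bundles, and base loci.
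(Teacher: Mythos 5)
Your argument is correct and follows essentially the same route as the paper: both reduce to showing $\mathsf{PBs}\vert\omega_Y\vert=\varnothing$ via Theorem~\ref{emptynosod}, then exploit $\omega_Y\cong\omega_{X_1}\boxtimes\cdots\boxtimes\omega_{X_n}$ to move each factor independently by some $L_i\in Pic^0(X_i)$ so as to miss the given point. The paper phrases this in terms of choosing effective divisors $D_i$ algebraically equivalent to $K_{X_i}$ with $z_i\notin D_i$ and summing their pullbacks, where you spell out the equivalent statement via the K\"unneth description of $H^0$ and the resulting base-locus formula for external products.
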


 \begin{proof}
 We prove that $\mathsf{PBs}\vert \omega_{Y}\vert=\varnothing$. Take any closed point $z=(z_{1}, \cdots, z_{n})\in Y$.
 Since $\mathsf{PBs}\vert \omega_{X_{i}}\vert=\varnothing$, there exist effective divisors $D_{i}$ such that $z_{i}\notin D_{i}$, and $D_{i}$ is algebraic equivalent to $K_{X_{i}}$. As an algebraic class,
 $K_{Y}=\sum X_{1}\times X_{2}\times \cdots \times D_{i}\times\cdots\times X_{n}$, we are done.
 \end{proof}

 \subsection{Induction process}
 Inspired by proof of Theorem \ref{pic0sod}, we provide an induction process to
 reduce the base points $\mathsf{PBs}\vert \omega_{X}\vert$.

 \begin{thm}\label{inductionprocess}
Let $D(X)=\langle \mathcal{A},\mathcal{B}\rangle$ be a semi-orthogonal decomposition, $Z=\mathsf{PBs}\vert \omega_{X}\vert$.
Let $M$ be a line bundle locally trivial around $Z$, then
 any skyscraper sheaf $k(x)$ with $x\notin Z'=\mathsf{Bs}\vert \omega_{X}\otimes M\vert$ must belong to $\mathcal{A}$ or $\mathcal{B}$.
 \end{thm}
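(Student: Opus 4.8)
The plan is to run the argument of Theorem \ref{pic0sod} essentially verbatim, the one new ingredient being that a line bundle which is trivial on a neighbourhood of $Z$ acts trivially on every object supported on $Z$.

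First, apply Corollary \ref{refinesod} to $D(X)=\langle\mathcal{A},\mathcal{B}\rangle$: one of the two components has all of its objects supported on $Z$. I will treat the case where this component is $\mathcal{B}$; the opposite case is handled by the dual argument, exchanging the roles of $a$ and $b$ below, using $a\otimes M^{-1}\cong a$ and applying Serre duality before cancelling $M$ (cf.\ Remark \ref{trickz}(1)). So from now on every object of $\mathcal{B}$ is supported on $Z$. I then record the key lemma: \emph{if $F$ is supported on $Z$, then $F\otimes M\cong F$}. By hypothesis there is an open $V\supseteq Z$ with $M|_{V}\cong\mathcal{O}_{V}$; writing $j\colon V\hookrightarrow X$ and using the triangle $R\underline{\Gamma_{Z}}F\to F\to Rj_{\ast}j^{\ast}F\to R\underline{\Gamma_{Z}}F[1]$ together with the vanishing of $R\underline{\Gamma_{Z}}$ on objects supported away from $Z$, one gets $F\cong R\underline{\Gamma_{Z}}F\cong R\underline{\Gamma_{Z}}(Rj_{\ast}j^{\ast}F)$; since $R\underline{\Gamma_{Z}}$ commutes with $-\otimes M$ and $j^{\ast}(F\otimes M)\cong j^{\ast}F$, this yields $F\otimes M\cong R\underline{\Gamma_{Z}}(Rj_{\ast}j^{\ast}(F\otimes M))\cong R\underline{\Gamma_{Z}}(Rj_{\ast}j^{\ast}F)\cong F$. (Equivalently, restriction to $V$ is an equivalence on the subcategories of objects supported on $Z$.)

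Now fix $x\notin Z'=\mathsf{Bs}\vert\omega_{X}\otimes M\vert$ and pick $s\in H^{0}(X,\omega_{X}\otimes M)$ with $s(x)\neq 0$; set $U=X\setminus Z(s)$, $j\colon U\hookrightarrow X$, and let $b\to k(x)\to a\to b[1]$ be the triangle attached to $k(x)$ by the semi-orthogonal decomposition, so $b\in\mathcal{B}$ and $a\in\mathcal{A}$. The composite of the connecting map $a\to b[1]$ with the map $b[1]\to (b\otimes\omega_{X}\otimes M)[1]$ induced by $s$ lies in $\mathrm{Hom}^{1}_{X}(a,b\otimes\omega_{X}\otimes M)$; since $b$ is supported on $Z$ the lemma gives $b\otimes M\cong b$, hence $\mathrm{Hom}^{\ast}_{X}(a,b\otimes\omega_{X}\otimes M)\cong\mathrm{Hom}^{\ast}_{X}(a,b\otimes\omega_{X})\cong\mathrm{Hom}^{\ast}_{X}(b,a)^{\vee}=0$ by Serre duality and semi-orthogonality, so this composite is zero. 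Restricting to $U$, where $s$ trivializes $(\omega_{X}\otimes M)|_{U}$, the induced map $j^{\ast}b[1]\to j^{\ast}(b\otimes\omega_{X}\otimes M)[1]$ is an isomorphism, so the connecting map $j^{\ast}a\to j^{\ast}b[1]$ vanishes; hence $k(x)\cong j^{\ast}a\oplus j^{\ast}b$ in $D(U)$. Since $x\in U$ and $\mathrm{Hom}(k(x),k(x))=k$, one of $j^{\ast}a$, $j^{\ast}b$ is zero, i.e.\ one of $a$, $b$ is supported on $Z(s)$, which does not contain $x$. If $a|_{U}=0$, then $\mathrm{Hom}^{\ast}_{X}(k(x),a)=0$, so the original triangle splits as $b\cong a[-1]\oplus k(x)$; then $\mathrm{Hom}^{\ast}_{X}(b,a)=0$ forces $\mathrm{Hom}^{\ast}_{X}(a[-1],a)=0$, so $a=0$ and $k(x)\cong b\in\mathcal{B}$. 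If $b|_{U}=0$, then $\mathrm{Hom}^{\ast}_{X}(b,k(x))=0$, so $a\cong k(x)\oplus b[1]$; then $\mathrm{Hom}^{\ast}_{X}(b,a)=0$ forces $\mathrm{Hom}^{\ast}_{X}(b,b)=0$, so $b=0$ and $k(x)\cong a\in\mathcal{A}$.

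The only genuinely new step is the lemma $F\otimes M\cong F$ for $F$ supported on $Z$, and this is exactly where the hypothesis on $M$ and the dichotomy of Corollary \ref{refinesod} interact; I regard it as the main, though mild, obstacle — it is a standard local-cohomology manipulation. Everything else (producing $s$, killing the connecting class on $U$, and peeling off $k(x)$ as a summand using that it is simple) is the same bookkeeping as in Theorems \ref{pic0sod} and \ref{generalsodsupport}. The one point to watch is that $a$ and $b$ are complexes, so ``support'' always means the union of the supports of the cohomology sheaves.
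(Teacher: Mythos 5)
Your proof is correct and follows essentially the same route as the paper's: invoke Corollary \ref{refinesod} to pin one component of the decomposition to have all objects supported on $Z$, deduce from local triviality of $M$ near $Z$ that this component is $M$-invariant, and then re-run the Theorem \ref{pic0sod} argument with $s\in H^{0}(X,\omega_{X}\otimes M)$ in place of a section of $\omega_{X}\otimes L$. Your choice to put the supported-on-$Z$ component on the $\mathcal{B}$ side is a modest simplification: $b\otimes M\cong b$ then holds immediately for the relevant $b$, whereas the paper takes $\mathcal{A}$ supported on $Z$ and must first transport $M$-invariance to $\mathcal{B}$ via the Remark \ref{tensorinvariant} trick (or, as in Remark \ref{trickz}(1), move $M^{-1}$ onto $a$ before applying Serre duality); both lead to the same vanishing of $\mathrm{Hom}^{\ast}(a,b\otimes\omega_{X}\otimes M)$. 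You also supply a proof of the unstated lemma ``$F$ supported on $Z$ implies $F\otimes M\cong F$'' via local cohomology, where the paper merely asserts it (a remark in the paper sketches a projection-formula alternative). One small wobble in your lemma: the distinguished triangle $R\underline{\Gamma_{Z}}F\to F\to Rj_{\ast}j^{\ast}F$ is the one attached to $j\colon X\setminus Z\hookrightarrow X$, not $j\colon V\hookrightarrow X$; with $j\colon V\hookrightarrow X$ the correct triangle is $R\underline{\Gamma_{W}}F\to F\to Rj_{\ast}j^{\ast}F$ for $W=X\setminus V$, and $R\underline{\Gamma_{W}}F=0$ since $W\cap Z=\varnothing$, which gives $F\cong Rj_{\ast}j^{\ast}F$ at once and cleanly yields $F\otimes M\cong Rj_{\ast}j^{\ast}(F\otimes M)\cong Rj_{\ast}j^{\ast}F\cong F$. (Your alternate chain via $R\underline{\Gamma_{Z}}\circ Rj_{\ast}j^{\ast}\cong R\underline{\Gamma_{Z}}$ for $Z\subseteq V$ is also valid, but the notation as written mixes the two triangles.) This is a cosmetic slip, not a gap.
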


 \begin{proof}
 Observe that objects of $\mathcal{A}$ or $\mathcal{B}$ are supported in $Z$. Without loss of generality, assume objects of $\mathcal{A}$ are supported in $Z$. Since $M$ is locally trivial around $Z$, take any object $E\in \mathcal{A}$, then $E\otimes M\cong E$. The trick is as follows: we only need to consider the point $x\in Z\setminus Z'$.
 \par
 Let $b\rightarrow k(x)\rightarrow a\rightarrow b[1]$ be the triangle from the semi-orthogonal decomposition. Tensor by line bundle $M$, there is a new triangle
 $$b\otimes M\rightarrow k(x)\otimes M\rightarrow a\otimes M\rightarrow b[1].$$
 By assumption, there is an open neighborhood $Z\subset U$ such that $M$ is trivial on $U$. Hence $a\otimes M\cong a\otimes M\vert_{U}\cong a$, as support of $a$ is in $Z$.

 Since $\mathcal{A}\otimes M=\mathcal{A}$, using the similar argument in proof of Remark \ref{tensorinvariant}, we get $\mathcal{B}\otimes M=\mathcal{B}$. Using the same techniques as in proof of Theorem \ref{pic0sod}, we prove that $k(x)$ belong to $\mathcal{A}$ or $\mathcal{B}$.
 \par
 In fact, the trick in Remark \ref{trickz} (1) can also apply directly.
 \end{proof}

 \begin{rem}
      Actually, we just need to assume that $M$ restricts to $nZ_{red}$ is trivial for any integer~$n$. The proof is the same, we can use that if $E$ is supported in $Z$, then there is an $E_{n}\in D(nZ_{red})$ and $i_{n}: nZ_{red}\hookrightarrow X$ such that $i_{n,\ast}E_{n}=E$. Thus, $E\otimes M\cong E$ by projection formula.
 \end{rem}

  \begin{eg}
 Let $f:X\longrightarrow Y$ be a morphism that contracts $Z$ to a point $y$, where
 $Z=\mathsf{PBs}\vert \omega_{X}\vert$. Since any line bundle $N$ is trivial around the point $y$, hence $M:=f^{\ast}N$ is a line bundle that is trivial around $Z$.
 \end{eg}

  If we can choose a line bundle $M$ locally trivial around $Z$ such that $Z_{1}:=Z\setminus \mathsf{Bs}\vert\omega_{X}\otimes M\vert$ becomes smaller, then apply the same process for $Z_{1}$. We get an induction process to prove that
 $D(X)$ have no nontrivial semi-orthogonal decompositions. It is interesting to find such an example to work.

\subsection{The singular varieties}\
 Let $X$ to be a variety with Cohen-Macaulay singularities. The Proposition \ref{sodsupport} was generalized to such $X$ by Dylan Spence \cite{spence2021note}. The theorems in this section can be  generalized to the cases of varieties with Cohen-Macaulay singularities in paper \cite{spence2021note}. For completeness, we state the theorem below.

\begin{thm}\label{singularsod}
Let $X$ be a Cohen-Macaulay projective variety with dualizing sheaf $\omega_{X}$. Assume a semi-orthogonal decomposition $Perf(X)=\langle \mathcal{A},\mathcal{B}\rangle$.
$Z=\mathsf{PBs}\vert \omega_{X}\vert$. Then one of the following is true.

 \begin{enumerate}
    \item  For any $x\in X\setminus Z$, $\mathcal{O}_{Z_{x}}\in \mathcal{A}$. In this case, support of all objects in $\mathcal{B}$ is contained in $Z$.
    \item For any $x\in X\setminus Z$, $\mathcal{O}_{Z_{x}}\in \mathcal{B}$. In this case, support of all objects in $\mathcal{A}$ is contained in $Z$.
 \end{enumerate}
$\mathcal{O}_{Z_{x}}$ is the sheaf of Koszul zero cycle, and the base locus of sheaves is also defined in \cite{spence2021note}.
 \end{thm}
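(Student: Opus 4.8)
The plan is to transcribe the proof of Theorem~\ref{pic0sod} and Corollary~\ref{refinesod} into the singular setting, making the two changes that the Cohen--Macaulay hypothesis forces: we work inside $Perf(X)$ rather than $D(X)$, and we replace the skyscraper $k(x)$, which need not be a perfect complex, by a Koszul zero-cycle $\mathcal{O}_{Z_{x}}$ in the sense of \cite{spence2021note}. The inputs I would import from \cite{spence2021note} are: (i) for a closed point $x$ the object $\mathcal{O}_{Z_{x}}$ is the structure sheaf of a zero-dimensional local complete intersection $Z_{x}$ supported at $x$, hence a perfect complex, and its degree-zero endomorphism algebra is the Artinian local ring $\mathcal{O}_{Z_{x},x}$, so $\mathcal{O}_{Z_{x}}$ carries no idempotents other than $0$ and $1$; (ii) the base locus $\mathsf{Bs}\vert\mathcal{F}\vert$ of a coherent sheaf $\mathcal{F}$ is set up so that $x\notin\mathsf{Bs}\vert\omega_{X}\otimes L\vert$ produces a section $s\in H^{0}(X,\omega_{X}\otimes L)$ whose zero locus $Z(s)$ misses a chosen $Z_{x}$. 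The Cohen--Macaulay hypothesis enters exactly so that the dualizing complex is $\omega_{X}[\dim X]$ with $\omega_{X}$ a sheaf — so that $\omega_{X}\otimes L$ and its sections make sense — and so that Grothendieck--Serre duality holds for perfect complexes on the projective scheme $X$.

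First I would check that the $Pic^{0}$-invariance inputs survive. Lemma~\ref{pic0invariant1} is already stated for $Perf$ of a projective scheme, and the purely triangulated argument of Remark~\ref{tensorinvariant} then gives $\mathcal{B}\otimes L=\mathcal{B}$ for all $L\in Pic^{0}(X)$. Moreover $\mathcal{O}_{Z_{x}}\otimes L\cong\mathcal{O}_{Z_{x}}$, since $L$ is a line bundle and $Z_{x}$ is Artinian; hence the proof of Lemma~\ref{pic0invariant2}, which only invokes Lemmas~\ref{unique} and \ref{pic0invariant1}, applies verbatim to the triangle $b\to\mathcal{O}_{Z_{x}}\to a\to b[1]$ coming from the semi-orthogonal decomposition, yielding $a\otimes L\cong a$ and $b\otimes L\cong b$.

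Now fix $x\in X\setminus Z$, choose $L\in Pic^{0}(X)$ with $x\notin\mathsf{Bs}\vert\omega_{X}\otimes L\vert$, and a section $s$ as in (ii), so that $Z_{x}\subset U:=X\setminus Z(s)$; write $j:U\hookrightarrow X$. Composing the connecting map $a\to b[1]$ with $s$ produces a map $a\to b\otimes\omega_{X}\otimes L[1]$; using $a\otimes L^{-1}\in\mathcal{A}$ (Remark~\ref{trickz}(1)) and Grothendieck--Serre duality applied to the perfect complex $a\otimes L^{-1}$,
$$\mathrm{Hom}_{X}(a,b\otimes\omega_{X}\otimes L[1])\cong\mathrm{Hom}_{X}(a\otimes L^{-1},b\otimes\omega_{X}[1])\cong\mathrm{Hom}_{X}(b,a\otimes L^{-1}[\dim X-1])^{\vee}=0$$
by semi-orthogonality. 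Since $s$ is invertible on $U$ the connecting map dies after $j^{\ast}$, so $j^{\ast}\mathcal{O}_{Z_{x}}\cong j^{\ast}a\oplus j^{\ast}b$; because $Z_{x}\subset U$ we have $R\underline{\Gamma_{Z(s)}}\mathcal{O}_{Z_{x}}=0$, hence $\mathcal{O}_{Z_{x}}\cong Rj_{\ast}j^{\ast}\mathcal{O}_{Z_{x}}$ and $\mathrm{Hom}^{0}_{U}(j^{\ast}\mathcal{O}_{Z_{x}},j^{\ast}\mathcal{O}_{Z_{x}})\cong\mathrm{Hom}^{0}_{X}(\mathcal{O}_{Z_{x}},\mathcal{O}_{Z_{x}})=\mathcal{O}_{Z_{x},x}$, a local ring. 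So one of $j^{\ast}a$, $j^{\ast}b$ vanishes; say $j^{\ast}a\cong 0$, so $\mathrm{Supp}(a)\subset Z(s)$ is disjoint from $\{x\}=\mathrm{Supp}(\mathcal{O}_{Z_{x}})$, whence $R\mathcal{H}om(\mathcal{O}_{Z_{x}},a)=0$, the triangle splits as $b\cong a[-1]\oplus\mathcal{O}_{Z_{x}}$, and $\mathrm{Hom}^{\ast}(\mathcal{B},\mathcal{A})=0$ forces $a\cong 0$, i.e. $\mathcal{O}_{Z_{x}}\cong b\in\mathcal{B}$; symmetrically $j^{\ast}b\cong 0$ gives $\mathcal{O}_{Z_{x}}\in\mathcal{A}$. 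That a single component is selected for all $x\notin Z$, together with the statements about supports, follows as in \cite[Theorem~1.2]{kawatani2018nonexistence} and \cite{spence2021note}: if, say, $\mathcal{O}_{Z_{x}}\in\mathcal{A}$ for every $x\notin Z$ but some $B\in\mathcal{B}$ had a point $x\notin Z$ in its support, then restricting the perfect complex $B$ to the Artinian scheme $Z_{x}$ gives $Li_{Z_{x}}^{\ast}B\neq 0$, hence by reflexivity of perfect complexes and adjunction $\mathrm{Hom}^{\ast}_{X}(B,\mathcal{O}_{Z_{x}})\neq 0$, contradicting $\mathrm{Hom}(\mathcal{B},\mathcal{A})=0$.

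The only genuinely delicate point — and the place where I would rely entirely on \cite{spence2021note} rather than reprove anything — is the combinatorics of the \emph{base locus of a sheaf} and of the Koszul cycles: that $x\notin\mathsf{Bs}\vert\omega_{X}\otimes L\vert$ yields a section whose zero locus misses a chosen Koszul cycle $Z_{x}$ at $x$, that $\mathcal{O}_{Z_{x}}$ is perfect with local degree-zero endomorphism ring, and that all of this is compatible with the twists by $Pic^{0}(X)$. Granting those foundational facts, the argument above is a formal transcription of the smooth case and introduces no new ideas.
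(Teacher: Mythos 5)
Your proof is correct and follows essentially the same route as the paper: observe that $\mathcal{O}_{Z_{x}}\otimes L\cong\mathcal{O}_{Z_{x}}$ because the Koszul cycle is a perfect complex supported at a single closed point, and then run the $Pic^{0}$-twisted version of the base-locus argument (Grothendieck--Serre duality for perfect complexes, localization to $U=X\setminus Z(s)$, local endomorphism ring of $\mathcal{O}_{Z_{x}}$) exactly as in Theorem \ref{pic0sod} and Remark \ref{trickz}. The paper compresses all of this into a one-line reduction to \cite[Theorem 3.1]{spence2021note}; you simply transcribe the same steps in full, importing from \cite{spence2021note} the same foundational facts about Koszul cycles and base loci of sheaves that the paper implicitly relies on.
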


\begin{proof}
Since the Koszul zero cycle $\mathcal{O}_{Z_{x}}$ is a perfect complex that is supported in closed point $x$, therefore $\mathcal{O}_{Z_{x}}\otimes L\cong \mathcal{O}_{Z_{x}}$ for any $L\in Pic^{0}(X)$. The remaining proof is the same as the proof of \cite[Theorem 3.1]{spence2021note}.
\end{proof}

 \section{Examples}

 \subsection{Derived category of symmetric product of curves}\quad \par
 In this subsection, we prove that the derived category of $i$-th symmetric product of a curve has no nontrivial semi-orthogonal decompositions when genus $g$ of the curve is greater or equal to $2$, and $i\leq g-1$. Furthermore, as a by product, we prove that $D(\prod_{i}S^{j_{i}}C_{i})$ has no non-trivial semi-orthogonal decompositions, where $j_{i}\leq g(C_{i})-1$.
 \par

 To warm-up, we deal with a well-known example, to see how generalization in Section $3$ works.

 \begin{eg}
 Let $X=C_{1}\times \cdots \times C_{n}$, suppose the genus of $C_{i}$ is positive, then there are no nontrivial semi-orthogonal decompositions for $D(X)$.

 \par

  Clearly $K_{X}\cong K_{C_{1}}\boxtimes\cdots\boxtimes K_{C_{n}}$. As a class in N\'{e}ron-Severi group,
  $$K_{X}=(2g_{1}-2)p_{1}\times C_{2}\times \cdots\times C_{n} + \cdots + (2g_{n}-2)C_{1}\times \cdots\times C_{n-1}\times p_{n},$$
  where $p_{i}$ are choosing fixed points. Thus, consider any closed points $x=(x_{1},\cdots,x_{n})\in C_{1}\times\cdots \times C_{n}$, we can choose generic $p_{i}$ to make sure that $x\notin \cup_{i} C_{1}\times \cdots\times p_{i}\times\cdots \times C_{n}$. Hence, $\mathsf{PBs}\vert \omega_{X}\vert=\varnothing$. By Corollary \ref{refinesod}, $D(X)$ have no nontrivial semi-orthogonal decomposition.
 \end{eg}

 \begin{rem}\
 \begin{enumerate}
    \item Clearly $\mathsf{Bs}\vert \omega_{X}\vert=\varnothing$. So the result that $D(X)$ admits no non-trivial semi-orthogonal decompositions is already known to the experts. The new idea here is that every points in a curve are algebraically equivalent, so we can move the points.
    \item If one of the curves is $\mathbb{P}^{1}$, it is well known that we can
    decompose $D(X)$ via decomposing $D(\mathbb{P}^{1})$.
 \end{enumerate}
 \end{rem}

  \begin{thm}
  Let $G$ be an algebraic group, and $H$ be a hypersurface of $G$. Take any closed point $p\in H$, there always exists a generic $g\in G$ such that $p\notin gH$.
 \end{thm}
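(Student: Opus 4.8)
The plan is to reduce the statement to a dimension count for the orbit of the hypersurface $H$ under the group action of $G$ on itself by left translation. First I would set up the incidence variety $I = \{(g,x) \in G \times H : x \in gH\}$, equivalently $I = \{(g,x) : g^{-1}x \in H\}$, which sits inside $G \times H$ and is cut out by the single equation coming from the hypersurface condition; concretely $I$ is the preimage of $H$ under the morphism $G \times H \to G$, $(g,x) \mapsto g^{-1}x$, so each fiber over $x \in H$ is a translate of $H$ and $I$ is irreducible (or at worst equidimensional) of dimension $\dim H + \dim H = 2\dim G - 2$.

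Next I would consider the two projections $\mathrm{pr}_1 : I \to G$ and $\mathrm{pr}_2 : I \to H$. The point $p \in H$ lies in $gH$ for some $g$ precisely when $g \in \mathrm{pr}_1(\mathrm{pr}_2^{-1}(p))$, and the fiber $\mathrm{pr}_2^{-1}(p)$ has dimension $\dim H = \dim G - 1$ (it is $\{g : g^{-1}p \in H\}$, a translate of the hypersurface-type locus). Hence $\mathrm{pr}_1(\mathrm{pr}_2^{-1}(p))$ is a constructible subset of $G$ of dimension at most $\dim G - 1$, so its closure is a proper closed subset, and any $g$ outside it satisfies $p \notin gH$. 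Since a nonempty open subset of $G$ is dense and contains plenty of closed points, such a generic $g$ exists.

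The one subtlety I expect to be the main obstacle is making precise what "hypersurface" and "generic" mean and checking the dimension bound is strict, i.e. that $\mathrm{pr}_2^{-1}(p)$ really has dimension $\dim G - 1$ and not $\dim G$. This is where one uses that $H \subsetneq G$ is a proper closed subset (codimension one), together with the fact that $g \mapsto g^{-1}p$ is an isomorphism of $G$ onto itself, so that $\{g : g^{-1}p \in H\}$ is the isomorphic image of $H$ and therefore of dimension exactly $\dim G - 1 < \dim G$. If $G$ is not irreducible one works component by component on $G$ and on $H$; in the application $G$ will be an abelian variety (or a product $\prod S^{j_i}C_i \times \prod A_k$ acting by translation) which is irreducible, so this technicality does not bite. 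I would phrase the conclusion as: the set of "bad" $g$ is contained in a proper closed subset $\overline{\mathrm{pr}_1(\mathrm{pr}_2^{-1}(p))} \subsetneq G$, hence a generic $g \in G$ works, which is exactly the form needed to run the moving argument for $\mathsf{PBs}\vert\omega_X\vert$ in the proof of Theorem \ref{symmetriccurve}.
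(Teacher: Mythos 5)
Your proof is correct and, once the incidence-variety scaffolding is stripped away, it is essentially the paper's argument: the set of bad $g$, namely $\{g : g^{-1}p\in H\}$, is the preimage of $H$ under the isomorphism $g\mapsto g^{-1}p$ of $G$, hence a proper closed subset, and any $g$ outside it works. The paper reaches this in one line by first reducing to $p=e$ (a translation), so that the bad locus is simply $H^{-1}$; your version skips the normalization and instead identifies the bad locus as $pH^{-1}$, which is the same thing up to a translation. The incidence variety $I\subset G\times H$ and the projections $\mathrm{pr}_1,\mathrm{pr}_2$ are harmless but unnecessary here — you never actually need $\dim I$ or properness of the projections, only the description of the single fiber $\mathrm{pr}_2^{-1}(p)$, which you already compute directly; and the appeal to constructibility and passing to a closure is superfluous since that fiber is visibly closed in $G$.
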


 \begin{proof}
 Without loss of generality, we assume $p=e$, the identity element of $G$. Define $Z=H^{-1}$, it is a proper closed subset of $G$. Take $g\notin Z$, then $e\notin gH$.
 \end{proof}

 \begin{cor}\label{translation}
 Let $D$ be a hypersurface of abelian variety $A$. Fix any point $p\in D$, there exists generic $a\in A$ such that $p\notin a+D$.
 \end{cor}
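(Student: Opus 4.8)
The plan is to deduce this immediately from the preceding Theorem. An abelian variety $A$ is in particular an algebraic group, and a hypersurface $D\subseteq A$ is a hypersurface of that algebraic group, so applying the Theorem to $G=A$, $H=D$ and the chosen point $p\in D$ produces a generic $g\in A$ with $p\notin gH$. Writing the group law on $A$ additively, $gH=a+D$, which is exactly the assertion. In other words the Corollary is nothing but the Theorem specialized to the commutative case, and no new idea is required.

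For self-containedness I would also spell out the one-line direct argument, which is the same as the one used for the Theorem. Set
\[
Z:=p-D=\{\,p-d:d\in D\,\}\subseteq A.
\]
Since $Z$ is the image of the hypersurface $D$ under the automorphism $x\mapsto p-x$ of $A$, it is again a hypersurface of $A$, in particular a \emph{proper} closed subset, so $A\setminus Z$ is a nonempty Zariski-open subset. Now take any $a\in A\setminus Z$; this is precisely what ``generic $a$'' means. If we had $p\in a+D$, say $p=a+d$ with $d\in D$, then $a=p-d\in Z$, contradicting the choice of $a$. Hence $p\notin a+D$, as desired.

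I expect no genuine obstacle here: the whole content is that a hypersurface is a proper closed subset (so that ``generic'' is meaningful) and that the inverse map and translations of $A$ are automorphisms, hence carry hypersurfaces to hypersurfaces. Commutativity of $A$ is not even needed — the statement holds verbatim for an arbitrary algebraic group, which is exactly the Theorem it specializes. The only mild point of care is the convention that ``generic'' should be read as ``lying in a fixed nonempty Zariski-open subset'' (namely $A\setminus(p-D)$), and that in applications below one uses this to move $p$ off a finite union of such hypersurfaces simultaneously, which is still a generic condition since a finite intersection of nonempty opens in the irreducible variety $A$ is nonempty.
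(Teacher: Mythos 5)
Your proposal is correct and is essentially the paper's argument: the paper derives the Corollary directly from the preceding Theorem (whose proof sets $Z=H^{-1}$ after translating $p$ to the identity, which in the abelian setting is exactly your $Z=p-D$). Nothing substantive differs; both observe that $p-D$ is a proper closed subset and pick $a$ outside it.
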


\begin{thm}\label{symmetriccurve}
Let $C$ be a smooth projective curve of genus $g(C)\geq 2$. $S^{i}C$ is the $i$ symmetric product of $C$.
Then $\mathsf{PBs}\vert \omega_{S^{i}C}\vert=\varnothing$ when $i\leq g(C)-1$. In particular, $D(S^{i}C)$ has no nontrivial semi-orthogonal decompositions in these cases.
\end{thm}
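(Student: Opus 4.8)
The plan is to prove that $\mathsf{PBs}\vert\omega_{S^{i}C}\vert=\varnothing$; then Corollary \ref{refinesod} (equivalently Theorem \ref{emptynosod}) immediately gives that $D(S^{i}C)$ has no nontrivial semi-orthogonal decompositions. Write $X=S^{i}C$, let $u=u_{i}\colon X\to Pic^{i}(C)$, $D\mapsto\mathcal{O}_{C}(D)$, be the Abel--Jacobi morphism, and let $W_{i}=u(X)\subset Pic^{i}(C)$ be the locus of effective divisor classes of degree $i$. Since $\dim X=i\le g-1<g=\dim Pic^{i}(C)$, the image $W_{i}$ is a proper closed subvariety of the abelian variety $Pic^{i}(C)$ --- this is precisely where the hypothesis $i\le g-1$ enters, and for $i\ge g$ one has $W_{i}=Pic^{i}(C)$ and the argument below breaks down, as anticipated in the introduction. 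The idea is, for each point $D\in X$, to produce a line bundle $L\in Pic^{0}(X)$ and a section of $\omega_{X}\otimes L$ not vanishing at $D$, by moving inside a family of "canonical" linear systems parametrised by $Pic^{0}(C)$.

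For a line bundle $A$ on $C$ write $A^{\boxtimes i}:=A\boxtimes\cdots\boxtimes A$ on $C^{i}$. Let $\pi\colon C^{i}\to X=C^{i}/S_{i}$ be the quotient morphism; it is finite and generically étale, with ramification divisor $\sum_{j<k}\Delta_{jk}$ (each diagonal $\Delta_{jk}=\{x_{j}=x_{k}\}$ occurring with multiplicity one), so that $\pi^{\ast}\omega_{X}=\omega_{C^{i}}\big(-\sum_{j<k}\Delta_{jk}\big)$. For $\alpha\in Pic^{0}(C)$ the line bundle $\alpha^{\boxtimes i}$ carries an evident $S_{i}$-equivariant structure and descends to a line bundle $L_{\alpha}$ on $X$; as the $L_{\alpha}$ form an algebraic family over the connected variety $Pic^{0}(C)$ with $L_{\mathcal{O}_{C}}=\mathcal{O}_{X}$, we have $L_{\alpha}\in Pic^{0}(X)$. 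Now $\pi^{\ast}(\omega_{X}\otimes L_{\alpha})=(\omega_{C}\otimes\alpha)^{\boxtimes i}\otimes\mathcal{O}_{C^{i}}\big(-\sum_{j<k}\Delta_{jk}\big)$, and taking $S_{i}$-invariant global sections --- which computes $H^{0}(X,\omega_{X}\otimes L_{\alpha})$, since $(\pi_{\ast}\mathcal{O}_{C^{i}})^{S_{i}}=\mathcal{O}_{X}$ --- a local computation of the descent data identifies the relevant $S_{i}$-action on $H^{0}(C,\omega_{C}\otimes\alpha)^{\otimes i}$ with the permutation action twisted by the sign character. As an anti-symmetric tensor automatically vanishes on every diagonal, this produces a canonical isomorphism
\[
H^{0}\big(X,\omega_{X}\otimes L_{\alpha}\big)\;\cong\;\textstyle\bigwedge^{i}H^{0}\big(C,\omega_{C}\otimes\alpha\big),
\]
under which $\eta_{1}\wedge\cdots\wedge\eta_{i}$ pulls back along $\pi$ to the section $(x_{1},\dots,x_{i})\mapsto\det\big(\eta_{j}(x_{k})\big)_{j,k}$.

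Granting this, the base locus is read off as follows. For $D=p_{1}+\cdots+p_{i}$ with distinct $p_{k}$ (arbitrary effective $D$ follows by a limiting argument), the section of $\omega_{X}\otimes L_{\alpha}$ attached to $\eta_{1}\wedge\cdots\wedge\eta_{i}$ is nonzero at $D$ iff the matrix $\big(\eta_{j}(p_{k})\big)$ is invertible. Hence $D\in\mathsf{Bs}\vert\omega_{X}\otimes L_{\alpha}\vert$ iff the evaluation map $H^{0}(C,\omega_{C}\otimes\alpha)\to\bigoplus_{k}(\omega_{C}\otimes\alpha)\vert_{p_{k}}$ has rank $<i$, i.e.\ iff $h^{0}\big((\omega_{C}\otimes\alpha)(-D)\big)>h^{0}(\omega_{C}\otimes\alpha)-i$. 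For $\alpha\neq\mathcal{O}_{C}$ we have $h^{0}(\omega_{C}\otimes\alpha)=g-1$, and Serre duality with Riemann--Roch give $h^{0}\big((\omega_{C}\otimes\alpha)(-D)\big)=h^{1}\big(\mathcal{O}_{C}(D)\otimes\alpha^{-1}\big)=h^{0}\big(\mathcal{O}_{C}(D)\otimes\alpha^{-1}\big)+(g-1-i)$; so the rank drops exactly when $\mathcal{O}_{C}(D)\otimes\alpha^{-1}$ is effective, that is, when $u(D)\otimes\alpha^{-1}\in W_{i}$.

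Finally, fix any $D\in X$. The set $\{\alpha\in Pic^{0}(C):u(D)\otimes\alpha^{-1}\in W_{i}\}$ is the preimage of $W_{i}$ under the isomorphism of varieties $\alpha\mapsto u(D)\otimes\alpha^{-1}$ from $Pic^{0}(C)$ onto $Pic^{i}(C)$, hence a proper closed subvariety of $Pic^{0}(C)$ (again because $W_{i}\subsetneq Pic^{i}(C)$). Choosing $\alpha$ outside it --- and outside $\{\mathcal{O}_{C}\}$ --- yields $L_{\alpha}\in Pic^{0}(X)$ and a section of $\omega_{X}\otimes L_{\alpha}$ not vanishing at $D$, so $D\notin\mathsf{PBs}\vert\omega_{X}\vert$. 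As $D$ was arbitrary, $\mathsf{PBs}\vert\omega_{S^{i}C}\vert=\varnothing$. The main work is the equivariant bookkeeping of the second paragraph --- pinning down the sign twist so that $S_{i}$-invariant sections are exactly the anti-symmetric tensors --- together with the reduction of the vanishing criterion from reduced to arbitrary effective $D$; both are essentially classical. Everything else is the "algebraically moving" mechanism: a proper closed subvariety of an abelian variety can be translated off any prescribed point by a generic translation, the higher-codimension analogue of Corollary \ref{translation}.
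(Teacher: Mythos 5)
Your argument is correct in outline but takes a genuinely different route from the paper. The paper works directly with the N\'eron--Severi class decomposition $\omega_{S^iC}\cong^{\mathrm{alg}}(g-i-1)x_{p_0}+\theta$ (quoted from MacDonald and \cite{biswas2020semiorthogonal}) and moves the two pieces algebraically and independently: replace $x_{p_0}$ by $x_p$ for a point $p\in C$ off the support of $z$, and translate $\Theta$ by a generic $a\in J(C)$, using that $t_a^{\ast}\Theta-\Theta\in Pic^{0}(J(C))$. No computation of global sections is needed. You instead compute $H^{0}(S^iC,\omega\otimes L_{\alpha})\cong\bigwedge^{i}H^{0}(C,\omega_C\otimes\alpha)$ by $S_i$-equivariant descent from $C^i$ and identify, via Riemann--Roch and Serre duality, $\mathsf{Bs}\vert\omega\otimes L_{\alpha}\vert$ with the pullback of $W_i$ shifted by $\alpha$. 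Both arguments ultimately turn on the same fact --- $W_i$, equivalently $\Theta$, is a proper closed subset of a $g$-dimensional abelian variety, so it can be translated off any fixed point --- and both invoke $i\leq g-1$ at precisely this step (you via $\dim W_i<g$, the paper via $g-i-1\geq0$). Your approach is more explicit about the linear system (it even gives $h^{0}(\omega\otimes L_{\alpha})=\binom{g-1}{i}$ for $\alpha\neq\mathcal{O}_C$), while the paper's is shorter because it offloads the hard geometric input to the quoted identity of divisor classes.

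One step in your write-up is not quite there: the passage from reduced $D=p_1+\cdots+p_i$ with distinct $p_k$ to arbitrary effective $D$ does not follow by a literal limiting argument. Closedness of $\mathsf{Bs}\vert\omega\otimes L_{\alpha}\vert$ gives only the inclusion $u^{-1}(W_i\otimes\alpha)\subseteq\mathsf{Bs}\vert\omega\otimes L_{\alpha}\vert$ after taking closures, which is the wrong direction: you need $\mathsf{Bs}\vert\omega\otimes L_{\alpha}\vert\subseteq u^{-1}(W_i\otimes\alpha)$ to conclude that non-reduced $D$ escape the base locus. What does work, and is classical, is that the value of $\eta_1\wedge\cdots\wedge\eta_i$ at an arbitrary $D$ is the generalised Wronskian determinant of $\eta_1,\dots,\eta_i$ along $D$, which is nonzero iff the restriction map $H^{0}(C,\omega_C\otimes\alpha)\to H^{0}\bigl((\omega_C\otimes\alpha)\vert_{D}\bigr)$ has full rank $i$; this is exactly the cohomological criterion you already set up and it then flows into the same Riemann--Roch computation. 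Phrasing it this way makes the argument uniform in $D$ and eliminates the need for any limit. Note that the paper sidesteps this issue entirely: the conditions $z\notin x_p$ and $u(z)\notin t_a^{\ast}\Theta$ are checked pointwise for any $z$, reduced or not.
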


\begin{proof}
We mainly use the concrete description of class $\omega_{S^{i}C}$ in N\'{e}ron-Severi group of $S^{i}C$, namely $\omega_{S^{i}C}\cong^{alg} (g-i-1)x_{p_{0}}+ \theta$, see \cite[Lemma 2.1]{biswas2020semiorthogonal} or \cite[(14.5), (14.9)]{Mac}.
Here, $x_{p_{0}}$ is the image of
$$j: S^{i-1}C\hookrightarrow S^{i}C\quad\quad D\mapsto D+p_{0},$$
$p_{0}$ is a fixed point in $C$. $\theta$ is the pulled pack line bundle $u^{\ast}\mathcal{O}(\Theta)$.
$$u:S^{i}C\longrightarrow J(C)\quad\quad D\mapsto \mathcal{O}(D-dp_{0}).$$
$\Theta$ is the theta divisor of $J(C)$. Though we don't have a canonical choice of $\Theta$, we will fix a choice, for example $\Theta=u(S^{g-1}C)+b$ with a constant $b\in J(C)$ to avoid the confusion of pulling back divisors, but the pulling back of line bundle by $u$ always make sense.
\par
Clearly, when $i\leq g-1$, we have $g-i-1\geq 0$. Now take any closed point $z=z_{1}+z_{2}+\cdots+z_{i}\in S^{i}C$. We can choose a point
$p\in C$ such that $p\neq z_{i}$. Again we have isomorphism $\omega_{S^{i}C}\cong^{alg}(g-i-1)x_{p}+\theta$ since $x_{p}\cong^{alg}x_{p_{0}}$. Clearly, $z\notin x_{p}$ by our choice of point $p$. This means that we can move $x_{p_{0}}$ algebraically to $x_{p}$ which avoids $z$. The next step is to move $\Theta$ algebraically.
\par
If $u(z)\notin \Theta$, the canonical section $S_{(g-i-1)x_{p}+\theta}\in \Gamma(S^{i}C,\mathcal{O}((g-i-1)x_{p})\otimes u^{\ast}\mathcal{O}(\Theta))$ does not vanish at $z$. Hence, $z\notin \mathsf{PBs}\vert \omega_{S^{i}C}\vert$.
\par
If $u(z)\in\Theta$, we can move $\Theta$ algebraically to some $t_{a}^{\ast}\Theta$ by translation $t_{a}$ of $J(C)$. In fact, if $A$ is an abelian variety, then $Pic^{0}(A)$ is exactly the translation invariant line bundles, and $t^{\ast}_{a}\Theta-\Theta$ is translation invariant, see for example \cite[Section 9]{Hbook}. In particular, $u(z)\notin t_{a}^{\ast}\Theta$ via generic choice of $a\in J(C)$ by Corollary \ref{translation}. Therefore,
$$\omega_{S^{i}C}=^{alg}(g-i-1)x_{p}+u^{\ast}t_{a}^{\ast}\Theta,$$
and the canonical section of $(g-i-1)x_{p}+u^{\ast}t_{a}^{\ast}\Theta$ does not vanish at $z$. That is, $z\notin \mathsf{PBs}\vert \omega_{S^{i}C}\vert$.
\par
To sum up, $\mathsf{PBs}\vert \omega_{S^{i}C}\vert=\varnothing$. According to Theorem \ref{emptynosod}, $D(S^{i}C)$ has no nontrivial semi-orthogonal decompositions.
\end{proof}

\begin{rem}
When $i\geq g$, the proof does not work. Note that there is a semi-orthogonal decomposition for $S^{i}C$ if $i=g,g+1,...,2g-2$, see \cite[Corollary 5.12]{toda2019semiorthogonal} or independently \cite[Theorem D]{belmans2019derived}, and \cite[corollary 3.8]{jiang2019derived}.
$$D(S^{i}C)=\langle J(C),\cdots, J(C),D(S^{2g-2-i}C) \rangle.$$
There are $i-g+1$ pieces of $J(C)$. As for $i\geq 2g-1$, $S^{i}C$ is a projective
bundle of $J(C)$.
\end{rem}

\begin{cor}\label{product2}
Let $C_{i}$ be curves of genus $g_{i}\geq 2$. Let $X= \prod_{i}S^{j_{i}}C_{i}$, here $j_{i}\leq g_{i}-1$. Then $D(X)$ has no nontrivial semi-orthogonal decompositions.
\end{cor}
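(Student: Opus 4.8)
The plan is to obtain this corollary as an immediate consequence of the two facts already in hand: Theorem~\ref{symmetriccurve}, which shows that a single $i$-th symmetric product with $i\le g-1$ has empty Picard base locus of its canonical bundle, and Theorem~\ref{product1}, which shows that this vanishing is inherited by finite products. First I would note that each factor $X_{i}:=S^{j_{i}}C_{i}$ is a smooth projective variety and that the pair $(C_{i},j_{i})$ meets the hypotheses $g(C_{i})\ge 2$, $j_{i}\le g(C_{i})-1$ of Theorem~\ref{symmetriccurve}; hence $\mathsf{PBs}\vert\omega_{X_{i}}\vert=\varnothing$ for every $i$. Then $X=\prod_{i}X_{i}$ is exactly a product of the kind treated in Theorem~\ref{product1}, so $D(X)$ has no nontrivial semi-orthogonal decompositions, and we are done.

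Should one want to avoid quoting Theorem~\ref{product1} as a black box, the argument can be written out directly. Fix a closed point $z=(z^{(1)},\dots,z^{(n)})\in X$ with $z^{(i)}=z^{(i)}_{1}+\dots+z^{(i)}_{j_{i}}\in S^{j_{i}}C_{i}$. In the N\'eron--Severi group one has $\omega_{X}\cong^{alg}\sum_{i}\mathrm{pr}_{i}^{\ast}\,\omega_{S^{j_{i}}C_{i}}$, where $\mathrm{pr}_{i}\colon X\to S^{j_{i}}C_{i}$ is the projection. By the Macdonald description used in the proof of Theorem~\ref{symmetriccurve}, each $\omega_{S^{j_{i}}C_{i}}$ is algebraically equivalent to $(g(C_{i})-1-j_{i})\,x_{p}+u^{\ast}t_{a}^{\ast}\Theta$; since $g(C_{i})-1-j_{i}\ge 0$ by hypothesis, the moves carried out there---choosing $p\in C_{i}$ distinct from $z^{(i)}_{j_{i}}$ and, if $u(z^{(i)})\in\Theta$, replacing $\Theta$ by a generic translate $t_{a}^{\ast}\Theta$ via Corollary~\ref{translation}---produce an effective divisor algebraically equivalent to $\omega_{S^{j_{i}}C_{i}}$ whose support does not contain $z^{(i)}$. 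Pulling these divisors back along the $\mathrm{pr}_{i}$ and summing gives an effective divisor algebraically equivalent to $\omega_{X}$ that misses $z$; as $z$ was arbitrary, $\mathsf{PBs}\vert\omega_{X}\vert=\varnothing$, and Theorem~\ref{emptynosod} yields the claim.

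I do not anticipate a genuine obstacle: all of the substantive work---identifying the algebraic class $(g-1-i)x_{p_{0}}+\theta$ and performing the algebraic moves on $C$ and on $J(C)$---is already done inside Theorem~\ref{symmetriccurve}, and the product step only uses the elementary additivity of the canonical class over a product together with the fact that the coefficient $g(C_{i})-1-j_{i}$ is nonnegative on each factor. If one is pressed to point to a delicate spot, it is the verification that the divisor moved on the $i$-th factor stays effective, which is precisely where $j_{i}\le g(C_{i})-1$ enters and beyond which (some $j_{i}\ge g(C_{i})$) the method genuinely breaks down, mirroring the remark following Theorem~\ref{symmetriccurve}.
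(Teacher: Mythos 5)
Your first paragraph is precisely the paper's proof: apply Theorem~\ref{symmetriccurve} to each factor $S^{j_i}C_i$ to get $\mathsf{PBs}\vert\omega_{S^{j_i}C_i}\vert=\varnothing$, then invoke Theorem~\ref{product1} for the product. The expanded second paragraph just unwinds those two cited results and is consistent with them, so the proposal is correct and takes essentially the same route.
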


\begin{proof}
Combining Theorem \ref{symmetriccurve} and Theorem \ref{product1}.
\end{proof}

\begin{cor}\label{blockrank3bundle}
Let $C_{i}$ be curves of genus $g_{i}\geq 2$. Let $X= \prod_{i}S^{j_{i}}C_{i}$, here $j_{i}\leq g_{i}-1$. $Y=X\times \prod A_{j}$, where $A_{j}$ is an abelian variety. Then $D(Y)$ has no nontrivial semi-orthogonal decompositions.
\end{cor}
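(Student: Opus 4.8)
The plan is to reduce Corollary~\ref{blockrank3bundle} to Corollary~\ref{product2} together with Theorem~\ref{product1}, by verifying that each factor of $Y$ has empty Picard base locus of its canonical bundle. Concretely, $Y = X \times \prod_j A_j$ where $X = \prod_i S^{j_i}C_i$; by Theorem~\ref{product1} it suffices to show $\mathsf{PBs}|\omega_{S^{j_i}C_i}| = \varnothing$ for each $i$ and $\mathsf{PBs}|\omega_{A_j}| = \varnothing$ for each abelian variety $A_j$. The first of these is exactly the content of Theorem~\ref{symmetriccurve}, which applies since $g_i \geq 2$ and $j_i \leq g_i - 1$. The second follows from the Example after Corollary~\ref{refinesod}: for an abelian variety $A$ the canonical bundle $\omega_A$ is trivial, so $\omega_A \otimes L = L \in \mathrm{Pic}^0(A)$ for every $L \in \mathrm{Pic}^0(A)$, and a generic such $L$ is a nontrivial translation-invariant line bundle with $H^0(A,L) = 0$; but one can do better and simply observe that $\omega_A \cong \mathcal{O}_A$ is globally generated, so already $\mathsf{Bs}|\omega_A| = \varnothing$, hence a fortiori $\mathsf{PBs}|\omega_A| = \varnothing$.

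First I would record the factorization $Y = \big(\prod_i S^{j_i}C_i\big) \times \big(\prod_j A_j\big)$ and note that each factor is a smooth projective variety, so Theorem~\ref{product1} is applicable once the hypothesis $\mathsf{PBs}|\omega_{\bullet}| = \varnothing$ is checked factorwise. Then I would invoke Theorem~\ref{symmetriccurve} for the symmetric-product factors and the triviality of $\omega_{A_j}$ for the abelian factors. An equally direct route, avoiding Theorem~\ref{product1} entirely, is to combine Corollary~\ref{product2} (which already gives $\mathsf{PBs}|\omega_X| = \varnothing$ for $X = \prod_i S^{j_i}C_i$) with the fact that $\omega_{\prod_j A_j} \cong \mathcal{O}$, and then run the N\'eron--Severi moving argument of Theorem~\ref{product1} one more time: given a closed point $(x, a) \in X \times \prod_j A_j$, pick an effective divisor $D_x$ on $X$ algebraically equivalent to $K_X$ with $x \notin D_x$, and take $K_Y =^{alg} D_x \times \prod_j A_j$, which avoids $(x,a)$. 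Either way one concludes $\mathsf{PBs}|\omega_Y| = \varnothing$, and then Theorem~\ref{emptynosod} (equivalently Corollary~\ref{refinesod}) gives that $D(Y)$ has no nontrivial semi-orthogonal decompositions.

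There is essentially no obstacle here: the corollary is a formal combination of Theorem~\ref{symmetriccurve}, Theorem~\ref{product1}, and the triviality of the canonical bundle of an abelian variety. The only point requiring a word of care is that Theorem~\ref{product1} is stated for a finite collection of varieties with empty Picard base locus, so one must present $Y$ as such a product with all factors of this type --- which is immediate since a product of abelian varieties is again an abelian variety, so one may even absorb $\prod_j A_j$ into a single factor. I would therefore write the proof as a two-line deduction: ``Each $S^{j_i}C_i$ has $\mathsf{PBs}|\omega| = \varnothing$ by Theorem~\ref{symmetriccurve}, and each $A_j$ has $\mathsf{Bs}|\omega| = \mathsf{Bs}|\mathcal{O}| = \varnothing$ hence $\mathsf{PBs}|\omega| = \varnothing$; now apply Theorem~\ref{product1}.''
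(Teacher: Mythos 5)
Your proof is correct and follows essentially the same route as the paper: the paper's one-line proof simply observes $\mathsf{Bs}|\omega_{A_j}|=\varnothing$ and implicitly combines Theorem~\ref{symmetriccurve} with Theorem~\ref{product1} (or Corollary~\ref{product2}), exactly as you spell out. The only difference is that you make the invocation of the product theorem and the emptiness checks explicit, which is a more complete write-up of the same argument.
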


\begin{proof}
This is clear since $\mathsf{Bs}\vert \omega_{A_{j}}\vert=\varnothing$.
\end{proof}

\begin{rem}
The reason we state the obvious corollary here is that the varieties in the corollary are the conjecturally building blocks of motivic decomposition of  moduli space of rank $r$ bundles with fixed determinant \cite[Conjecture 1.8]{gomez2020motivic}, and then the derived categories in the corollary are conjecturally the building blocks of derived category of rank $r$ vector bundles with fixed determinant.
\end{rem}

There is a notion called stably semi-orthogonal indecomposable, which is more general than the notion of indecomposability, and stably semi-orthogonal indecomposability implies indecomposability \cite{pirozhkov2021stably}. It is natural to ask whether $S^{i}C$ is stably semi-orthogonal indecomposable when $i\leq g(C)-1$ and $g(C)\geq 2$.

\begin{thm}\label{stablyindecom}
Let $C$ be a smooth projective curve, $i\leq g(C)-1$ and $g(C)\geq 2$, then $S^{i}C$ is stably semi-orthogonal indecomposable if and only if $\mathsf{Bs}\vert K_{S^{i}C}\vert =\varnothing$.
\end{thm}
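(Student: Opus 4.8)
The equivalence has a formal half and a substantive half. The formal half, ``$\mathsf{Bs}\vert K_{S^{i}C}\vert=\varnothing\Rightarrow$ stably indecomposable'', is immediate: an empty base locus means $\omega_{S^{i}C}$ is globally generated, and by Pirozhkov's work a smooth projective variety whose canonical bundle is globally generated is stably semi-orthogonally indecomposable, so this direction follows from \cite{pirozhkov2021stably}. Note that the ``dual'' sufficient condition, $\omega_{S^{i}C}^{-1}$ globally generated, is never available here: for $1\leq i\leq g-1$ the bundle $\omega_{S^{i}C}$ is nef, effective and non-trivial (indeed $h^{0}(\omega_{S^{i}C})=\binom{g}{i}\geq g\geq 2$), so $\omega_{S^{i}C}^{-1}$ is not even effective. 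Thus the plain canonical base locus is the only handle, and the theorem asserts that for $S^{i}C$ it is moreover a \emph{necessary} condition --- in sharp contrast with $\mathsf{PBs}\vert\omega_{S^{i}C}\vert$, which controls ordinary indecomposability and is \emph{always} empty in this range by Theorem \ref{symmetriccurve}.

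For the converse I would argue by contraposition: assuming $\mathsf{Bs}\vert K_{S^{i}C}\vert\neq\varnothing$, produce a smooth projective $Y$ with $D(Y)$ indecomposable together with a nontrivial semi-orthogonal decomposition of $D(S^{i}C\times Y)$; since $D(Y)$ and $D(S^{i}C)$ are both indecomposable, such a decomposition is not induced from either factor, which is exactly what stable indecomposability of $S^{i}C$ forbids. The preliminary step is to make the base locus geometric. Using Macdonald's precise formula for the line bundle $\omega_{S^{i}C}$ (the refinement of $\omega_{S^{i}C}\cong^{alg}(g-i-1)x_{p_{0}}+\theta$ of \cite[(14.5), (14.9)]{Mac} already used in the proof of Theorem \ref{symmetriccurve}), together with $Ru_{\ast}\mathcal{O}_{S^{i}C}$ and the projection formula along the Abel--Jacobi map $u\colon S^{i}C\to J(C)$, one identifies $\mathsf{Bs}\vert\omega_{S^{i}C}\vert$ with a Brill--Noether locus of special effective divisors --- for $i=g-1$ it is essentially $u^{-1}(\operatorname{Sing}\Theta)=\{D\in S^{g-1}C:\ h^{0}(\mathcal{O}_{C}(D))\geq 2\}$ --- so its nonemptiness is a Brill--Noether condition (for $i=g-1$: nonempty exactly when $g\geq 4$, or $g=3$ and $C$ hyperelliptic). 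What makes this harder than the results of Section $3$ is that this locus is invisible to the $\mathsf{PBs}$-machinery after passing to a product: for every $Y$ one has $\mathsf{PBs}\vert\omega_{S^{i}C\times Y}\vert=S^{i}C\times\mathsf{PBs}\vert\omega_{Y}\vert$, which never records it, so Theorem \ref{emptynosod} contributes nothing and genuinely new input is required.

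Concretely, having fixed a base point $z\in\mathsf{Bs}\vert\omega_{S^{i}C}\vert$, the plan is to take $Y$ with $\mathsf{PBs}\vert\omega_{Y}\vert\neq\varnothing$ but $D(Y)$ still indecomposable --- for instance an irregularity-zero surface of general type with nonempty canonical base locus, such as a Horikawa surface or a Ciliberto double plane (indecomposable by \cite{bastianelli2020indecomposability}) --- so that $\mathsf{PBs}\vert\omega_{S^{i}C\times Y}\vert$ is nonempty, and then to construct inside $D(S^{i}C\times Y)$ a semi-orthogonal component supported on $S^{i}C\times\mathsf{PBs}\vert\omega_{Y}\vert$; equivalently, to exhibit a semi-orthogonal decomposition that splits the skyscraper $k(z)\boxtimes k(y)$ for a base point $y$ of $\vert\omega_{Y}\vert$, the very behaviour stable indecomposability rules out. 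The natural geometric source is theta-geometry: the morphism $u$ onto the singular Brill--Noether locus $W_{i}$, and Toda's decompositions $D(S^{j}C)=\langle J(C),\dots,J(C),D(S^{2g-2-j}C)\rangle$ for $j\geq g$ of \cite[Corollary 5.12]{toda2019semiorthogonal}, which already display how the locus where $\omega_{S^{j}C}$ fails to be globally generated produces semi-orthogonal pieces; the point $z$ enters through the failure of global generation of $\omega_{S^{i}C}$ along the singular/special locus of $W_{i}$. The main obstacle is precisely this construction --- producing and verifying the nontrivial semi-orthogonal decomposition of $D(S^{i}C\times Y)$, equivalently making Pirozhkov's obstruction to stable indecomposability explicit at $z$ and showing it does not vanish; everything else, combined with the first paragraph, assembles into the asserted equivalence.
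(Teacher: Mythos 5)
Your argument diverges substantially from the paper's proof, and the converse direction contains a genuine gap.

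The paper's proof rests on a single clean equivalence from \cite[Proposition 3.4]{biswas2020semiorthogonal}: for $S^{i}C$ with $i\leq g-1$, the Albanese map $u\colon S^{i}C\to J(C)$ is finite if and only if $\mathsf{Bs}\vert K_{S^{i}C}\vert=\varnothing$. Then the forward direction is immediate from Pirozhkov's \cite[Theorem 1.4]{pirozhkov2021stably} (finite Albanese morphism $\Rightarrow$ stably indecomposable), and the converse is almost as fast: if $u$ is not finite, some fiber is a positive-dimensional linear system $\mathbb{P}^{m}\subset S^{i}C$, $m\geq 1$; since $D(\mathbb{P}^{m})$ is decomposable, \cite[Corollary 2.5]{pirozhkov2021stably} (stable indecomposability passes to closed smooth subvarieties) immediately shows $S^{i}C$ is not stably indecomposable. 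No products, no explicit new SODs, and no Brill--Noether case analysis are needed.

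On your forward direction: you assert that ``by Pirozhkov's work a smooth projective variety whose canonical bundle is globally generated is stably indecomposable.'' That is not what Pirozhkov proves --- his criterion is finiteness of the Albanese morphism, not global generation of $\omega_{X}$, and the two are quite different in general (e.g.\ any $X$ with $q(X)=0$ and $\dim X\geq 1$ has non-finite Albanese regardless of how positive $\omega_{X}$ is, which is why Enriques surfaces are not obstructed). The implication ``global generation $\Rightarrow$ finite Albanese'' that would repair your sentence is exactly the Biswas--Gomez--Lee result you do not invoke, and it is special to $S^{i}C$ in this range. So even the direction you call ``formal'' needs this extra input.

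On your converse direction: the plan to contradict stable indecomposability by \emph{constructing} a nontrivial SOD of $D(S^{i}C\times Y)$ for a cleverly chosen $Y$ (Horikawa or Ciliberto surface) via theta geometry and Toda's decompositions is not carried out --- you explicitly flag ``producing and verifying the nontrivial semi-orthogonal decomposition'' as the main obstacle. That obstacle is the theorem. The paper sidesteps it entirely: rather than building an SOD of a product, it identifies a closed $\mathbb{P}^{m}\hookrightarrow S^{i}C$ and applies Pirozhkov's heredity result. Without a concrete construction or a citation playing the role of \cite[Corollary 2.5]{pirozhkov2021stably}, your converse is incomplete. Your observation that $\mathsf{PBs}\vert\omega_{S^{i}C\times Y}\vert=S^{i}C\times\mathsf{PBs}\vert\omega_{Y}\vert$ is a correct diagnosis of why the Section 3 machinery cannot help, but it does not by itself supply the missing decomposition.
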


\begin{proof}
According to \cite[Proposition 3.4]{biswas2020semiorthogonal}, the Albanese map $u: S^{i}C\rightarrow J(C)$ is finite if and only if $\mathsf{Bs}\vert K_{S^{i}C}\vert=\varnothing$. If $u$ is finite, then according to \cite[Theorem 1.4]{pirozhkov2021stably}, $S^{i}C$ is stably semi-orthgonal indecomposable. If $u$ is not finite, then $S^{i}C$ will contain
a fiber which is a projective space $\mathbb{P}^{m}$, $m\geq 1$. Since $D(\mathbb{P}^{m})$ is decomposable, hence according to \cite[Corollary 2.5]{pirozhkov2021stably}, $S^{i}C$ will not be stably semi-orthogonal indecomposable.
\end{proof}

\subsection{Derived categories of elliptic surfaces of Kodaira dimension $1$}\
Let $X$ be a minimal projective smooth surface with Kodaira dimension $1$. It was proved that if $P_{g}(X)\geq1$, $D(X)$ have no nontrivial semi-orthogonal decompositions \cite[Theorem 4.2]{kawatani2018nonexistence}.  We give a further description of $D(X)$ for cases $P_{g}(X)=0$ in the sense whether it has nontrivial semi-orthogonal decompositions, which can not be achieved by just considering the base locus of the canonical bundle. Note that $X$ admits a fibration $\pi: X\rightarrow C$ with general fiber elliptic curve. $C$ is a smooth projective curve.
\begin{prop}\label{basicelliptic} \cite[Chap 7, Lemma 13, Lemma 14]{Fred}
Let $\pi: X\rightarrow C$ be an elliptic fibration, then $L:=R\pi^{1}\mathcal{O}(X)^{-1}$ is a line bundle on $C$. In particular, $d=\mathsf{deg} L\geq 0$, and $\mathcal{X}(\mathcal{O}_{X})=\mathsf{deg}(L)$.
\begin{enumerate}
   \item If $L\neq \mathcal{O}_{C}$, then $q(X):=h^{1}(X,\mathcal{O}_{X})=g(C)$ and
    $P_{g}(X)=d+g(C)-1$.
   \item If $L=\mathcal{O}_{C}$, then $q(X)=g(C)+1$ and $p_{g}(X)=g(C)$.
\end{enumerate}
\end{prop}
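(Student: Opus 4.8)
The plan is to deduce all four assertions from the Leray spectral sequence of $\pi$, combined with Riemann--Roch and Serre duality on the curve $C$, taking Kodaira's structure theory of relatively minimal elliptic fibrations as background. First I would pin down the two relevant direct-image sheaves. Since the fibers of $\pi$ are connected we have $\pi_{\ast}\mathcal{O}_{X}=\mathcal{O}_{C}$; and, working over $\mathbb{C}$ where all multiple fibers are tame, cohomology and base change shows that $R^{1}\pi_{\ast}\mathcal{O}_{X}$ is locally free of rank one — indeed $\chi(\mathcal{O}_{X_{t}})$ is constant equal to $0$ by flatness and $h^{0}(\mathcal{O}_{X_{t}})=1$ for every fiber (including non-reduced ones, where one uses that $\mathcal{O}_{E}(-E)$ is torsion of order exactly the multiplicity), so $h^{1}(\mathcal{O}_{X_{t}})\equiv 1$. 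Hence $L:=(R^{1}\pi_{\ast}\mathcal{O}_{X})^{-1}$ is a genuine line bundle on $C$, and by relative duality $L\cong\pi_{\ast}\omega_{X/C}$.

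Next I would compute $\chi(\mathcal{O}_{X})$. Because $C$ is a curve, the Leray spectral sequence gives $\chi(\mathcal{O}_{X})=\chi(C,\pi_{\ast}\mathcal{O}_{X})-\chi(C,R^{1}\pi_{\ast}\mathcal{O}_{X})=\chi(\mathcal{O}_{C})-\chi(L^{-1})$, and Riemann--Roch on $C$ turns this into $(1-g(C))-(-\deg L+1-g(C))=\deg L=d$, proving $\chi(\mathcal{O}_{X})=\deg L$. For the irregularity I would use the five-term exact sequence
$$0\to H^{1}(C,\mathcal{O}_{C})\to H^{1}(X,\mathcal{O}_{X})\to H^{0}(C,L^{-1})\to H^{2}(C,\mathcal{O}_{C})=0,$$
so that $q(X)=g(C)+h^{0}(C,L^{-1})$. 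Since $\deg L\geq 0$, the group $H^{0}(C,L^{-1})$ is zero unless $L\cong\mathcal{O}_{C}$, in which case it is one-dimensional; this gives $q(X)=g(C)$ in case $(1)$ and $q(X)=g(C)+1$ in case $(2)$. Finally $P_{g}$ follows for free: from $\chi(\mathcal{O}_{X})=1-q(X)+P_{g}(X)$ together with the two computations above we get $P_{g}(X)=d-1+q(X)$, which equals $d+g(C)-1$ when $L\not\cong\mathcal{O}_{C}$ and equals $0-1+(g(C)+1)=g(C)$ when $L\cong\mathcal{O}_{C}$ (noting $d=0$ there). As a consistency check one could instead compute $P_{g}$ directly from Kodaira's canonical bundle formula $\omega_{X}\cong\pi^{\ast}(\omega_{C}\otimes L)\otimes\mathcal{O}_{X}(\sum_{i}(m_{i}-1)F_{i})$ via the projection formula and Serre duality on $C$, reaching the same answer.

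The difficulty is organizational rather than conceptual: the genuinely non-formal input — and the step I would regard as the main obstacle to a self-contained account — is the inequality $\deg L\geq 0$ (equivalently $\chi(\mathcal{O}_{X})\geq 0$), namely the semipositivity of the Hodge bundle $\pi_{\ast}\omega_{X/C}$ of a relatively minimal elliptic fibration. The only other point needing care is the verification that $R^{1}\pi_{\ast}\mathcal{O}_{X}$ is torsion-free, i.e. that the multiple fibers contribute nothing to it, which is exactly where the characteristic-zero hypothesis enters. I would confine both of these to the reference to Friedman's book and otherwise run the spectral-sequence computation as above.
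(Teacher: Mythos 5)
Your proposal is correct, and the paper itself offers no proof here — it simply cites Friedman (Chap.\ 7, Lemmas 13 and 14), whose argument is precisely the Leray/Riemann--Roch computation you carry out. You also correctly isolate the two non-formal inputs (torsion-freeness of $R^{1}\pi_{\ast}\mathcal{O}_{X}$ at the multiple fibers, and $\deg L\geq 0$) as the points to be outsourced to the reference.
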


\begin{rem}\label{class0}
Assume $P_{g}(X)=0$. There are three possibilities.
\begin{enumerate}
    \item $L:=R\pi^{1}\mathcal{O}(X)^{\vee}=\mathcal{O}_{C}$, $g(C)=0$, and $q(X)=1$.
    \item $L\neq \mathcal{O}_{C}$,
$g(C)=1$, $q(X)=1$, and $d=0$.
    \item $L\neq \mathcal{O}_{C}$, $g(C)=0$, $q(X)=0$, and $d=1$. In this case, $\mathcal{O}_{X}$ is an exceptional object.
\end{enumerate}
\end{rem}

\begin{thm}\label{canonicalbundle}\cite[Chap 7, Theorem 15]{Fred}
Let $X$ be minimal, and $\pi: X\rightarrow C$ is an elliptic fibration. Let $F_{i}$ be the multiple fibers of $\pi$ with multiplicity $m_{i}\geq 2$. We have a formula for the canonical bundle of $X$,
$$\omega_{X}=\pi^{\ast}(K_{C}\otimes L)\otimes \mathcal{O}_{X}(\sum_{i}(m_{i}-1)F_{i}).$$
\end{thm}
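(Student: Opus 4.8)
\emph{Sketch of how one would prove this.} This is Kodaira's canonical bundle formula for a relatively minimal elliptic fibration; since $X$ is a minimal surface, no fibre of $\pi$ contains a $(-1)$-curve, so $\pi$ is relatively minimal and Kodaira's classification of singular fibres is available. The plan is to split the asserted identity into its ``generic'' part $\pi^{\ast}L$, which will come out of relative duality, and its vertical correction $\sum_{i}(m_{i}-1)F_{i}$, which will come out of a fibrewise local analysis. Because $\pi$ is a flat Gorenstein morphism of smooth varieties, $\omega_{X}\cong\omega_{X/C}\otimes\pi^{\ast}\omega_{C}$ with $\omega_{X/C}$ a line bundle, so it suffices to prove $\omega_{X/C}\cong\pi^{\ast}L\otimes\mathcal{O}_{X}\bigl(\sum_{i}(m_{i}-1)F_{i}\bigr)$.

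First I would compute $\pi_{\ast}\omega_{X/C}$ by relative Grothendieck--Serre duality: $R\pi_{\ast}\omega_{X/C}[1]\cong R\mathcal{H}om_{C}(R\pi_{\ast}\mathcal{O}_{X},\mathcal{O}_{C})$. Since $\pi_{\ast}\mathcal{O}_{X}\cong\mathcal{O}_{C}$ ($\pi$ has connected fibres and $C$ is normal) and $R^{1}\pi_{\ast}\mathcal{O}_{X}\cong L^{-1}$ is locally free of rank one by Proposition \ref{basicelliptic}, taking cohomology of the right-hand side yields $\pi_{\ast}\omega_{X/C}\cong (R^{1}\pi_{\ast}\mathcal{O}_{X})^{\vee}\cong L$. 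The natural evaluation map $\pi^{\ast}L=\pi^{\ast}\pi_{\ast}\omega_{X/C}\to\omega_{X/C}$ is then a nonzero morphism of line bundles on the surface $X$, hence injective with cokernel supported on an effective divisor $D$, so $\omega_{X/C}\cong\pi^{\ast}L\otimes\mathcal{O}_{X}(D)$. Next I would show that $D$ is vertical and supported only over the images of the multiple fibres: if $c\in C$ is such that $\pi^{-1}(c)$ is not multiple, then the fibre $X_{c}$ is reduced, connected and Gorenstein of arithmetic genus one, with $h^{0}(\mathcal{O}_{X_{c}})=1$ and $\chi(\mathcal{O}_{X_{c}})=0$, hence $h^{0}(\omega_{X_{c}})=h^{1}(\mathcal{O}_{X_{c}})=1$ by Serre duality on $X_{c}$; since also $\deg\omega_{X_{c}}=0$, a direct check of the Kodaira fibre types forces $\omega_{X_{c}}\cong\mathcal{O}_{X_{c}}$. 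By cohomology and base change the evaluation map restricts over $X_{c}$ to the isomorphism $H^{0}(X_{c},\omega_{X_{c}})\otimes\mathcal{O}_{X_{c}}\to\omega_{X_{c}}$, so $D$ meets no such fibre; therefore $D=\sum_{i}D_{i}$ with $D_{i}$ effective and supported on $(F_{i})_{\mathrm{red}}$.

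It remains to identify $D_{i}$ near each multiple fibre, and this is the technical crux. Working on a small analytic (or formal) disc $\Delta\ni p_{i}$ with coordinate $t$ for which $\pi^{\ast}t$ has divisor $m_{i}F_{i}$, I would perform the cyclic base change $\mu\colon\Delta'\to\Delta$, $t=s^{m_{i}}$, and pass to $\nu\colon X'\to X$, the minimal resolution of $X\times_{\Delta}\Delta'$, with induced fibration $\pi'\colon X'\to\Delta'$. Kodaira's structure theory of multiple fibres shows that, after shrinking, $X'$ is smooth, $\pi'$ is relatively minimal, and its central fibre is \emph{reduced}, so the first step applied to $\pi'$ already gives $\omega_{X'/\Delta'}\cong\mathcal{O}_{X'}$. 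On the other hand, the chain rule for relative dualizing sheaves — combined with $\omega_{\Delta'/\Delta}\cong\mathcal{O}_{\Delta'}\bigl((m_{i}-1)\,0'\bigr)$ (where $0'=\mu^{-1}(0)$) and the explicit local form of $\nu$ over $\pi^{-1}(0)$, where all the ramification of order $m_{i}$ along $m_{i}F_{i}$ is concentrated — expresses $\omega_{X/\Delta}$ in terms of $\omega_{X'/\Delta'}$, the relative ramification, and the identity $\nu^{\ast}F_{i}=F'$. Carrying this comparison through yields $\omega_{X/\Delta}\cong\mathcal{O}_{X}\bigl((m_{i}-1)F_{i}\bigr)$ locally, i.e.\ $D_{i}=(m_{i}-1)F_{i}$, and patching over all $p_{i}$ gives the formula.

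The one genuinely delicate point is this last step: it depends on Kodaira's fine analysis of the local model of a multiple fibre (that a cyclic base change of degree equal to the multiplicity makes the fibre reduced, with a controlled minimal resolution) and on tracking dualizing sheaves through the resulting cyclic cover — precisely the computation performed in \cite[Chap.~7]{Fred}; everything else is a formal consequence of relative duality and of cohomology and base change. As a cross-check of the numerical coefficients one can instead argue indirectly: intersecting $D$ with the components of $\pi^{-1}(p_{i})$ pins $D_{i}$ down up to adding a multiple of the numerical fibre class $\tfrac1{m_{i}}\pi^{-1}(p_{i})$, and the residual constant is fixed by matching $P_{g}(X)=h^{0}(\omega_{X})=h^{0}(K_{C}\otimes L)$ against Proposition \ref{basicelliptic}; but the honest local argument above is what I would write out in full.
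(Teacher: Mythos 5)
The paper does not prove this theorem; it is quoted verbatim from Friedman's textbook \cite[Chap.\ 7, Theorem 15]{Fred}, which in turn follows Kodaira's original argument. Your sketch faithfully reproduces precisely that argument: split off $\pi^{\ast}\omega_{C}$, compute $\pi_{\ast}\omega_{X/C}\cong L$ by relative Grothendieck--Serre duality using $\pi_{\ast}\mathcal{O}_{X}\cong\mathcal{O}_{C}$ and $R^{1}\pi_{\ast}\mathcal{O}_{X}\cong L^{-1}$, observe that the evaluation map $\pi^{\ast}L\to\omega_{X/C}$ is generically an isomorphism (since $\omega_{X_{c}}\cong\mathcal{O}_{X_{c}}$ on every non-multiple fibre, by the degree-$0$-plus-section argument), and then pin down the vertical correction $(m_{i}-1)F_{i}$ at each multiple fibre via the cyclic base change $t=s^{m_{i}}$, the minimal resolution, and the chain rule for relative dualizing sheaves; the alternative numerical cross-check you mention at the end (matching $P_{g}$ against Proposition~\ref{basicelliptic}) is also standard. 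This is the same route as the cited source, so there is nothing to compare or correct.
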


The following theorem is the case of (2) in Remark \ref{class0}.
\begin{thm}\label{elliptic2}
Let $\pi: X\rightarrow C$ be elliptic fibration, $X$ is minimal of Kodaira dimension $1$. Assume $P_{g}(X)=0$, $L\neq \mathcal{O}_{C}$, $g(C)=1$, $q(X)=1$, and $d=0$, then $D(X)$ has no nontrivial
semi-orthogonal decompositions.
\end{thm}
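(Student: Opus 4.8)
The strategy is to compute $\mathsf{PBs}\vert\omega_X\vert$ explicitly and then feed it into the machinery of Section 3.

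First I would determine $\omega_X$. Since $g(C)=1$ we have $K_C=\mathcal O_C$, so Theorem \ref{canonicalbundle} gives $\omega_X=\pi^{\ast}L\otimes\mathcal O_X\big(\textstyle\sum_i(m_i-1)F_i\big)$, where the $F_i$ are the reduced multiple fibres, of multiplicities $m_i\geq 2$, over points $p_i\in C$, and $L$ is the line bundle of Proposition \ref{basicelliptic}, here with $\deg L=d=0$ and $L\neq\mathcal O_C$, i.e.\ $L\in Pic^{0}(C)\setminus\{\mathcal O_C\}$. Set $D=\sum_i(m_i-1)F_i$ and $Z=\operatorname{Supp}D=\bigcup_iF_i$. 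Note $D\neq 0$: otherwise $\omega_X=\pi^{\ast}L$ with $\deg L=0$, forcing $\kappa(X)\le 0$ and contradicting $\kappa(X)=1$; so there is at least one multiple fibre.

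Next, because $q(X)=g(C)=1$ and $\pi$ has connected fibres, $\pi^{\ast}\colon Pic^{0}(C)\to Pic^{0}(X)$ is an injective morphism of one-dimensional abelian varieties, hence an isomorphism; thus every $M\in Pic^{0}(X)$ is $\pi^{\ast}N$ for a unique $N\in Pic^{0}(C)$. Using $m_iF_i=\pi^{\ast}p_i$ together with $0\le m_i-1<m_i$ one gets $\pi_{\ast}\mathcal O_X(D)=\mathcal O_C$, so by the projection formula $h^{0}(X,\omega_X\otimes\pi^{\ast}N)=h^{0}(C,L\otimes N)$. As $\deg(L\otimes N)=0$ on the elliptic curve $C$, this vanishes unless $N=L^{-1}$, in which case $\omega_X\otimes\pi^{\ast}L^{-1}\cong\mathcal O_X(D)$ has a single section, vanishing exactly along $D$. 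Hence $\mathsf{PBs}\vert\omega_X\vert=D$, with underlying set $Z$. Now let $D(X)=\langle\mathcal A,\mathcal B\rangle$ be a semi-orthogonal decomposition. By Corollary \ref{refinesod} we may assume every object of $\mathcal B$ is supported on $Z$, and it remains to prove $\mathcal B=0$. The key geometric point is that the normal bundle $\mathcal O_X(F_i)\vert_{F_i}$ of a multiple fibre is torsion of order exactly $m_i$, and $\pi^{\ast}L$ is trivial on a Zariski neighbourhood of each $F_i$; hence $\omega_X$ restricted to a neighbourhood of $Z$ is a torsion line bundle, and with $n_0=\operatorname{lcm}(m_1,\dots,m_r)$ the bundle $\omega_X^{\otimes n_0}$ is trivial on an open set $V\supseteq Z$ (a disjoint union of $\pi$-preimages of small neighbourhoods of the $p_i$). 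Since all objects of $\mathcal B$ are supported on $Z$ this yields $\mathcal B\otimes\omega_X^{\otimes n_0}=\mathcal B$, and then $\mathcal A\otimes\omega_X^{\otimes n_0}=\mathcal A$ by taking right orthogonals. Combining this with the $Pic^{0}$-invariance of $\mathcal A$ and $\mathcal B$ (Lemma \ref{pic0invariant1}, Theorem \ref{pic0sod}) — noting that $Pic^{0}(X)=\pi^{\ast}Pic^{0}(C)$ restricts trivially to each fibre — and with Serre duality on $X$, I would then show $\operatorname{Hom}^{\bullet}(\mathcal A,\mathcal B)=0$ as well, so the decomposition is orthogonal; since $X$ is connected, $D(X)$ has no nontrivial orthogonal decomposition, hence $\mathcal B=0$.

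The main obstacle is exactly this last step: passing from "$\mathcal A,\mathcal B$ are $\otimes\omega_X^{\otimes n_0}$-invariant and $\mathcal B$ lives on a disjoint union of multiple fibres" to orthogonality. Serre duality alone only shows that orthogonality of $\langle\mathcal A,\mathcal B\rangle$ is equivalent to $\mathcal A\otimes\omega_X\subseteq\mathcal A$ (equivalently $\mathcal B\otimes\omega_X=\mathcal B$), which is circular with the goal; the genuine extra input is that $\mathcal B$ — being supported on the pairwise disjoint $F_i$, hence a direct sum $\bigoplus_i\mathcal B_i$ with $\mathcal B_i$ supported on $F_i$ — is stable under $\otimes\omega_X$, not merely under $\otimes\omega_X^{\otimes n_0}$. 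This should follow by analysing objects supported on a single $F_i$: such an object is of the form $i_{n\ast}E_n$ for $E_n$ on an infinitesimal thickening $nF_i$ with $i_n\colon nF_i\hookrightarrow X$, and on $nF_i$ the restriction of $\omega_X$ is torsion of order $m_i$, so the projection formula identifies $E\otimes\omega_X$ with another object of the same shape; one must check that $\otimes\omega_X$ does not leave $\mathcal B_i$, e.g.\ by exhibiting $\otimes\omega_X$-stable generators of $\mathcal B_i$ and using the uniqueness statement of Lemma \ref{unique}. I regard establishing this stability as the crux; everything else is an application of the general theory of Section 3 (Corollary \ref{refinesod}, Theorem \ref{emptynosod}, and, should the iterative version be needed, Theorem \ref{inductionprocess}) together with standard facts about elliptic surfaces.
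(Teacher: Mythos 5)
Your strategy matches the paper's almost exactly: apply the canonical bundle formula $\omega_X=\pi^{\ast}L\otimes\mathcal O_X(\sum(m_i-1)F_i)$, observe that $\pi^{\ast}L\in Pic^{0}(X)$ because $\deg L=0$, conclude via the Section~3 machinery (Corollary~\ref{refinesod}) that one SOD component is supported on a finite union of fibres, and exploit the torsion nature of $\omega_X$ near the multiple fibres. Your exact computation $\mathsf{PBs}\vert\omega_X\vert=\operatorname{Supp}\sum(m_i-1)F_i$ (using $\pi_{\ast}\mathcal O_X(D)=\mathcal O_C$ and the $q(X)=g(C)=1$ identification $Pic^{0}(X)=\pi^{\ast}Pic^{0}(C)$) is correct, though the paper only needs the containment of the base locus in finitely many fibres and does not spell it out so precisely. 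Your observation that $\omega_X^{n_0}$ is Zariski-locally trivial near $Z$ is also correct: since $m_i\mid n_0$, one has $n_0F_i=(n_0/m_i)\pi^{\ast}p_i$, so $\omega_X^{n_0}$ is a pullback from $C$ near each $F_i$.

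The gap you flag, however, is genuine and you have not closed it. For orthogonality you need $\mathcal B\otimes\omega_X=\mathcal B$ (not merely $\mathcal B\otimes\omega_X^{n_0}=\mathcal B$): with that, Serre duality gives $\operatorname{Hom}(A,B)^{\vee}\cong\operatorname{Hom}(B\otimes\omega_X^{-1}[-\dim X],A)=0$ since $B\otimes\omega_X^{-1}\in\mathcal B$, and connectedness finishes the proof. Your sketch — decompose $\mathcal B=\bigoplus_i\mathcal B_i$ by fibre, use the projection formula on thickenings $nF_i$, invoke Lemma~\ref{unique} — does not obviously establish $\otimes\omega_X$-stability, because $\omega_X\vert_{nF_i}$ is only torsion of order $m_i$, not trivial, so the same obstruction you identified reappears at the level of a single fibre. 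The paper does not fill this gap either; its proof ends with ``the remaining argument is the same as the proof in [Kawatani–Okawa, Theorem~4.2],'' which is precisely where the passage from ``$\mathcal B$ supported on multiple fibres with $\omega_X$ torsion nearby'' to ``$\mathcal B=0$'' is carried out. You should read that proof (and the lemmas preceding it in that paper) to see how the $\otimes\omega_X$-invariance — or a substitute for it — is actually obtained; simply raising $\omega_X$ to the $n_0$-th power is not enough, as you correctly suspected. Until that step is supplied, your argument, like the paper's written text, is a reduction to the cited result rather than a self-contained proof.
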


\begin{proof}
According to Theorem \ref{canonicalbundle},
$$\omega_{X}=\pi^{\ast}L\otimes \mathcal{O}_{X}(\sum_{i}(m_{i}-1)F_{i}).$$
Since $\mathsf{deg}L=0$, as an algebraic class,
$$\omega_{X}\cong \mathcal{O}_{X}(\sum_{i}(m_{i}-1)F_{i}).$$
Clearly, the base locus of $\mathcal{O}_{X}(\sum_{i}(m_{i}-1)F_{i})$ is contained in finitely many fibers $m_{i}F_{i}$. The remaining argument is the same as the proof in \cite[Theorem 4.2]{kawatani2018nonexistence}.
\end{proof}

The remaining case is the case (1) in the Remark \ref{class0}, that is, $L$ is trivial, $P_{g}(X)=0$, $g(C)=0$, and $q(X)=1$. The canonical bundle formula becomes
$$\omega_{X}\cong \pi^{\ast}\mathcal{O}_{\mathbb{P}^{1}}(-2)\otimes \mathcal{O}(\sum_{i}(m_{i}-1)F_{i}).$$

If we move $\mathcal{O}(-1)$ to each factor $\mathcal{O}_{X}((m_{i}-1)F_{i})$, it would become a noneffective divisor. We can not say the general theorem for these cases. Since $q(X)=1$, it is still possible that the paracanonical systems admits nontrivial sections.

\begin{ques}
Let $\pi: X\rightarrow \mathbb{P}^{1}$ be an elliptic fibration, $X$ is minimal of Kodaira dimension~$1$. Suppose $P_{g}(X)=0$, and $q(X)=1$. Is $D(X)$ indecomposable ?
\end{ques}

\subsection{Minimal surfaces of general type, $P_{g}(X)=q(X)=1$, $K^{2}=2$ or $K^{2}=3$.}\ \\
Firstly, the new approach presented in this paper may work for more examples of minimal surfaces of general type. It is interesting to investigate in the future. We present some boundary cases of Conjecture \ref{newconj}, namely
$P_{g}(X)=q(X)=1$.

\begin{thm}\label{surfacegeneraltype}
Let $X$ be a minimal surface of general type such that $P_{g}(X)=q(X)=1$, $K^{2}=2$ or $K^{2}=3$, then $D(X)$ has no nontrivial semi-orthogonal decompositions.
\end{thm}

\begin{proof}
  For these types of surfaces, there are complete classifications, in particular, it was known that $\dim \mathsf{PBs}\vert\omega_{X}\vert= 0$ or $\mathsf{PBs}\vert\omega_{X}\vert= \varnothing\vert$ \cite[Section 5, Lemma 4.10]{CC}. Hence $D(X)$ has no nontrivial semi-orthogonal decompositions by Corollary \ref{emptynosod}.
\end{proof}

The proof in Theorem \ref{surfacegeneraltype} was used to classify the surfaces. We present a different proof for some explicit constructions of surfaces of general type such that $P_{g}(X)=q(X)=1$, $K^{2}=2$ or $K^{2}=3$.

\par

Let $E^{(2)}$ be the double symmetric product of an elliptic curve. Define $\pi: E^{(2)}\rightarrow E$ via $(x,y)\mapsto x+y$. We denote $D_{p}$ the class $(p,z-p)$ of $E^{(2)}$, and $F_{x}$ the fibre of $\pi$ at $x\in E$.
\par
The following proposition is for the case $K^{2}=2$.
\begin{prop}
   Let $B$ be a general member of $\vert-6D+2F\vert$ in $E^{(2)}$. Let $X'$ be a double cover of $E^{(2)}$ ramified over $B$. Let $X$ be the minimal resolution of $X'$, and $X'$ is isomorphic to the canonical model of $X$. Then
   $D(X)$ has no nontrivial semi-orthogonal decompositions.
\end{prop}

\begin{proof}
  These examples indeed exist, see \cite[Part III]{CC81} and \cite[Section 5]{CC}. It is clear that $K_{X'}$ is a line bundle. Since $K_{X'}\cong f^{\ast}(K_{E^{(2)}}+ B/2)$, and $K_{X}\cong g^{\ast}K_{X'}$, where $g$ is the resolution $g:X\rightarrow X'$. Write the composition of $g$ and $f$ as $h: X\rightarrow E^{(2)}$.
  Then
  $$K_{X}\cong h^{\ast}(K_{E^{(2)}}+ B/2).$$
  Since $K_{E^{(2)}}\cong^{alg} -2D+F$, therefore $K_{E^{(2)}}+ B/2\cong^{alg}D$. Since $\mathsf{PBs}\vert D\vert=\varnothing$, we have $\mathsf{PBs}\vert\omega_{X}\vert=\varnothing$. Thus, $D(X)$ has no nontrivial semi-orthogonal decompositions.
\end{proof}

\begin{rem}
   We actually obtain examples of surface $X'$ with Gorenstein singularities such that $Perf(X')$ has no nontrivial semi-orthogonal decompositions by Theorem \ref{singularsod}.
\end{rem}
Let $E^{(3)}$ be the triple symmetric product of $E$, similar notations for $D$ and $F$. The following proposition is for some examples such that $K^{2}=3$, see \cite[Theorem 5.8]{CC}.

\begin{prop}
Let $X'$ be a divisor in $E^{(3)}$. Assume $X'$ is homologous to $\vert4D-F\vert$ and has only simple singularities. Let $X$ be a minimal resolution of $X'$ where $X'$ is isomorphic to the canonical model of $X$. Then $D(X)$ has no nontrivial semi-orthogonal decompositions.
\end{prop}

\begin{proof}
  These examples indeed exist \cite[Theorem 5.8]{CC}. We prove that $\mathsf{PBs}\vert K_{X'}\vert=\varnothing$. By Adjunction,
  $$K_{X'}\cong K_{E^{(3)}}+X'\vert_{X'}\cong^{alg} -3D+F+4D-F\vert_{X'}=D\vert_{X'}.$$
  Clearly, $\mathsf{PBs}\vert \mathcal{O}_{X'}(D)\vert=\varnothing$.
\end{proof}

\begin{rem}
  It is also interesting to investigate the cases $4\leq K^{2}\leq 9$, which would be more complicated.
\end{rem}
\section{Application to Bridgeland stability condition}
\begin{thm}\label{boundedphase}
Let $X$ be a smooth projective variety, and $Z=\mathsf{PBs}\vert \omega_{X}\vert$.
Let $\sigma$ be a Bridgeland stability condition of $D(X)$. Take any $k(x)$, $x\in X\setminus Z$. Assume $k(x)$ is not semi-stable, then the phase number of $HN$ factors of $k(x): \phi_{1}>\phi_{2}>\cdots >\phi_{n}$ satisfies
$$\phi_{i}-\phi_{i+1}\leq \dim X-1.$$
\end{thm}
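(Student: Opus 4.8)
The plan is to run, for the Harder--Narasimhan filtration of $k(x)$, the same argument that proves Theorem~\ref{pic0sod}, with the semi-orthogonal pair $\langle\mathcal{A},\mathcal{B}\rangle$ there replaced by the torsion pair $\bigl(\mathcal{P}(>\phi),\mathcal{P}(\le\phi)\bigr)$ that the slicing $\mathcal{P}$ of $\sigma$ cuts out at a well-chosen real number $\phi$, and with the vanishing supplied there by semi-orthogonality replaced by one forced by a large phase gap via Serre duality. Write $n=\dim X$. We may assume $k(x)$ is not $\sigma$-semistable, so it has HN factors $A_{1},\dots,A_{m}$ ($m\ge 2$) with phases $\phi_{1}>\dots>\phi_{m}$. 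Fix $i\in\{1,\dots,m-1\}$; I will prove $\phi_{i}-\phi_{i+1}\le n-1$ by assuming the contrary and deriving a contradiction.

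First I would produce the analogue of the defining triangle of a semi-orthogonal decomposition. Choose $\phi$ with $\phi_{i+1}\le\phi<\phi_{i}$ and let $P=k(x)_{>\phi}$, $N=k(x)_{\le\phi}$ be the two HN truncations, so that $P$ has HN factors $A_{1},\dots,A_{i}$, $N$ has HN factors $A_{i+1},\dots,A_{m}$, $Hom(P,N)=0$, and there is a triangle $P\to k(x)\to N\xrightarrow{\,w\,}P[1]$. Since $k(x)$ is simple, hence indecomposable, and $P,N\ne 0$, the connecting morphism $w$ is nonzero; this is the datum we must show is incompatible with $x\notin Z$.

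The heart of the matter is to show $w$ dies after composition with a section of $\omega_{X}$ ``up to $Pic^{0}$''. As $x\notin Z=\mathsf{PBs}|\omega_{X}|$, pick $L\in Pic^{0}(X)$ and $s\in H^{0}(X,\omega_{X}\otimes L)$ with $s(x)\ne 0$, and set $U=X\setminus Z(s)$, $j\colon U\hookrightarrow X$, so $x\in U$. Multiplication by $s$ gives a morphism $N\xrightarrow{\,w\,}P[1]\xrightarrow{\,\cdot s\,}P[1]\otimes\omega_{X}\otimes L$, and I claim it vanishes: by Serre duality $Hom(N,P[1]\otimes\omega_{X}\otimes L)\cong Hom(P\otimes L,N[n-1])^{\vee}$, and since $\sigma$ is invariant under twisting by $Pic^{0}(X)$ one has $P\otimes L\cong(k(x)\otimes L)_{>\phi}\cong k(x)_{>\phi}=P$, so this group is $Hom(P,N[n-1])^{\vee}$; a nonzero class there would, by d\'{e}vissage along the two HN filtrations, yield a nonzero $Hom(A_{a},A_{b}[n-1])$ with $a\le i<b$, which is impossible because $\phi_{a}\ge\phi_{i}>\phi_{i+1}+(n-1)\ge\phi_{b}+(n-1)$. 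As $s$ is invertible on $U$ it follows that $w|_{U}=0$, so the pulled-back triangle splits: $j^{*}k(x)\cong j^{*}P\oplus j^{*}N$. Because $x\in U$ and $j^{*}k(x)=k(x)$ has $\mathbb{C}$ as its endomorphism ring in $D(U)$, one of $j^{*}P,j^{*}N$ is zero, i.e.\ $P$ or $N$ is supported on $Z(s)$, hence away from $x$. If $j^{*}N=0$ then $\mathrm{RHom}(k(x),N)=0$, so the morphism $k(x)\to N$ vanishes, the triangle splits, and $P\cong k(x)\oplus N[-1]$ — impossible, since $N[-1]$ is then a nonzero summand of $P$ whose HN factors have phase $\le\phi_{i+1}-1<\phi$, whereas $P\in\mathcal{P}(>\phi)$. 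If $j^{*}P=0$ then $\mathrm{RHom}(P,k(x))=0$, so the morphism $P\to k(x)$ vanishes, the triangle splits, and $N\cong k(x)\oplus P[1]$ — impossible, since $k(x)$ is then a nonzero summand of $N$ with an HN factor of phase $\phi_{1}>\phi$, whereas $N\in\mathcal{P}(\le\phi)$. Either way $\phi_{i}-\phi_{i+1}>n-1$ is contradicted, and the theorem follows.

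The step I expect to be the real content is the ``key vanishing'': seeing that the hypothesis $\phi_{i}-\phi_{i+1}>n-1$ occupies precisely the slot that semi-orthogonality occupies in Theorem~\ref{pic0sod}, and keeping the degree bookkeeping honest so that the bound comes out as $n-1$ rather than $n$. The one ingredient that must be argued rather than quoted is the invariance of $\sigma$ under $-\otimes L'$ for $L'\in Pic^{0}(X)$, used to absorb the twist $L$: such $L'$ act trivially on the numerical Grothendieck group, so $(-\otimes L')^{\ast}\sigma$ has the same central charge as $\sigma$, and since $Pic^{0}(X)$ is connected one concludes from Bridgeland's local homeomorphism that $(-\otimes L')^{\ast}\sigma=\sigma$. (If one only wants the weaker statement with $\mathsf{Bs}|\omega_{X}|$ in place of $\mathsf{PBs}|\omega_{X}|$, take $L=\mathcal{O}_{X}$ and this point disappears.)
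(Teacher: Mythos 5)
Your proof is essentially the paper's proof, written in slightly different notation: your $P$, $N$ are the paper's HN truncations $E_i$ and $K_i=\operatorname{Cone}(E_i\to k(x))$, and both arguments run the triangle $P\to k(x)\to N\to P[1]$ through a section of $\omega_X\otimes L$, kill the connecting map by Serre duality plus the assumed phase gap, restrict to $U=X\setminus Z(s)$ to split $k(x)$, and derive a contradiction. The one place you diverge is the justification of the $Pic^0$-invariance needed to absorb the twist $L$: the paper simply quotes \cite[Corollary 3.5.2]{Polishchuk} that every $L\in Pic^0(X)$ preserves the HN filtration of a skyscraper sheaf (hence $E_i\otimes L\cong E_i$, which is all that is used), whereas you prove the stronger statement that $(-\otimes L)^*\sigma=\sigma$, via triviality of the action of $Pic^0(X)$ on numerical $K$-theory, connectedness of $Pic^0(X)$, and Bridgeland's local homeomorphism. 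Your argument is self-contained and gives a cleaner conceptual reason, though to make it fully rigorous you would still want to say a word about why $L\mapsto(-\otimes L)^*\sigma$ is continuous from $Pic^0(X)$ to the stability manifold (this is standard but not completely free). Otherwise the degree bookkeeping, the d\'evissage to $Hom(A_a,A_b[\dim X-1])=0$, and the endgame are all correct and match the paper.
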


\begin{proof}
The line bundle $L\in Pic^{0}(X)$ preserves the $HN$ filtration of skycraper sheaves \cite[Corollary 3.5.2]{Polishchuk}. So it is enough to prove the theorem for the case $Z=\mathsf{Bs}\vert \omega_{X}\vert$. Let $s$ be a section of $\omega_{X}$ which does not vanish at $x\in X\setminus Z$. If $k(x)$ is semi-stable with respect to the stability condition, then there is nothing to prove. We assume that $k(x)$ is not semi-stable. Let

$$0=E_{0}\rightarrow E_{1}\rightarrow E_{2}\rightarrow \cdots \rightarrow E_{n-1}\rightarrow E_{n}=k(x)$$
be the $HN$ filtration of $k(x)$, $x\in U=X\setminus Z(s)$, with $HN$ factors $A_{i}:=\operatorname{Cone}(E_{i-1}\rightarrow E_{i})$ such that $A_{i}\in \mathcal{P}(\phi_{i})$,
$$\phi_{1}>\phi_{2}>\cdots >\phi_{n}.$$
Define $K_{i}=\operatorname{Cone}(E_{i}\rightarrow k(x))$.
There is a commutative diagram.
 $$\xymatrix{E_{i}\ar[r]^{id}\ar[d]&E_{i}\ar[r]\ar[d]&0\ar[d]\\
 E_{i+1}\ar[r]\ar[d]&k(x)\ar[r]\ar[d]&K_{i+1}\ar[d]^{id}\\
 A_{i+1}\ar[r]&K_{i}\ar[r]&K_{i+1}}$$
Consider composition $K_{i}\rightarrow E_{i}[1]\rightarrow E_{i}[1]\otimes \omega_{X}$ with section $s$,
$$Hom(K_{i}, E_{i}[1]\otimes \omega_{X})\cong Hom(E_{i},K_{i}[\dim X-1])^{\vee}.$$
If $\phi^{-}(E_{i})>\phi^{+}(K_{i})+\dim X-1$, then $Hom(K_{i},E_{i}[1]\otimes \omega_{X})=0$. Therefore $k(x)\vert_{U}\cong E_{i}\vert_{U}\oplus K_{i}\vert_{U}$. Since $Hom(k(x), k(x))=\mathbb{C}$, $E_{i}\vert_{U}$ or $K_{i}\vert_{U}$ must be zero. Suppose $E_{i}\vert_{U}=0$, it implies that the support of $E_{i}$ is in $Z(s)$, then $Hom^{\ast}(E_{i}, k(x))=0$. Hence $K_{i}\cong k(x)\oplus E_{i}[1]$. Since $Hom(E_{i}[1],K_{i})=0$ by the assumption that $\phi^{-}(E_{i})> \phi^{+}(K_{i})+\dim X - 1$, we have $E_{i}\cong0$, a contradiction. Similarly, suppose $K_{i}\vert_{U}=0$, then $E_{i}\cong k(x)\oplus K_{i}[-1]$, which implies $K_{i}\cong 0$.
So we prove $\phi^{-}(E_{i})\leq\phi^{+}(K_{i})+\dim X-1$. Clearly $\phi^{-}(E_{i})=\phi_{i}$. We prove by induction that $\phi^{+}(K_{i})\leq \phi_{i+1}$.
\begin{lem}
$\phi^{+}K_{i}\leq \phi_{i+1}$, $i= 1,\ 2,\ \cdots,\ n-1$.
\end{lem}

 \begin{proof}
 Consider the triangle $E_{n-1}\rightarrow k(x)\rightarrow K_{n-1}$, then $K_{n-1}=A_{n}$ by definition. Clearly
 $\phi^{+}K_{n-1}=\phi(A_{n})=\phi_{n}$. Now assume $\phi^{+}(K_{i+1})\leq \phi_{i+2}$, we need to prove that $\phi^{+}(K_{i})\leq \phi_{i+1}$. Let $B_{1}$ be a $HN$ factor of $K_{i}$ such
 that $\phi(B_{1})=\phi^{+}(K_{i})$. Suppose $\phi(B_{1})>\phi_{i+1}$, then $Hom(B_{1}, A_{i+1})=0$, and $Hom(B_{1}, K_{i+1})=0$. Therefore $Hom(B_{1}, K_{i})=0$ by the traingle
 $$A_{i+1}\rightarrow K_{i}\rightarrow K_{i+1}\rightarrow A_{i+1}[1].$$
 But if we apply $Hom(B_{1}, \bullet)$ to the HN filtration of $K_{i}$, we get
 $$Hom(B_{1}, B_{1})\cong Hom(B_{1}, K_{i})=0,$$
 a contradiction. Thus $\phi^{+}(K_{i})\leq \phi_{i+1}$.
 \end{proof}
According to the lemma above, we have inequality $\phi_{i}-\phi_{i+1}\leq \dim X-1$.
\end{proof}

\begin{cor}
Let $X=S^{i}C$, $g(C)\geq 2$, and $i\leq g(C)-1$, or be a minimal surface of general type with $P_{g}(X)=q(X)=1$, $K^{2}=2$ or $K^{2}=3$. If $\sigma$ is any Bridgeland stability condition of $D(X)$, then for any closed point $x\in X$, the phase number of $HN$ factors of skyscraper sheaf $k(x)$ (if $k(x)$ is not semi-satble): $\phi_{1}>\phi_{2}>\cdots >\phi_{n}$ satisfies
$$\phi_{i}-\phi_{i+1}\leq \dim X-1.$$
\end{cor}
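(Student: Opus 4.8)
The plan is to read this off as a direct consequence of Theorem \ref{boundedphase} once we know that the relevant base locus is empty. Theorem \ref{boundedphase} bounds the consecutive $HN$ phase gaps of $k(x)$ only for $x\in X\setminus Z$ with $Z=\mathsf{PBs}\vert\omega_X\vert$; so for the corollary --- which asserts the bound for \emph{every} closed point of $X=S^{i}C$ --- all that must be checked is that $Z=\varnothing$ in the case at hand.

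First I would record that $S^{i}C$ is a smooth projective variety, so Theorem \ref{boundedphase} applies to $D(S^{i}C)$ with $X=S^{i}C$. Then, since $g(C)\geq 2$ and $i\leq g(C)-1$, Theorem \ref{symmetriccurve} yields $\mathsf{PBs}\vert\omega_{S^{i}C}\vert=\varnothing$; equivalently, $Z=\varnothing$ and $X\setminus Z=X$.

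Finally I would apply Theorem \ref{boundedphase}: for each closed point $x\in X=X\setminus Z$, writing $\phi_{1}>\phi_{2}>\cdots>\phi_{n}$ for the phases of the $HN$ factors of $k(x)$, we obtain $\phi_{i}-\phi_{i+1}\leq \dim X-1$, and since this now holds at all closed points the corollary is proved. There is no real obstacle here: the only substantive ingredient is the already-established vanishing of the Picard base locus of $\omega_{S^{i}C}$, and everything else is bookkeeping. One may note that $\dim S^{i}C=i$, so the estimate reads $\phi_{i}-\phi_{i+1}\leq i-1$; in particular for $i=1$ it recovers the familiar fact that skyscraper sheaves on a curve are $\sigma$-semistable for every Bridgeland stability condition $\sigma$.
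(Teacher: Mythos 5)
Your proof is correct and is exactly the intended argument: the corollary is an immediate consequence of Theorem \ref{boundedphase} once Theorem \ref{symmetriccurve} gives $\mathsf{PBs}\vert\omega_{S^{i}C}\vert=\varnothing$, so the base locus condition $x\in X\setminus Z$ is vacuous. Your closing remark that for $i=1$ the bound $\phi_i-\phi_{i+1}\leq 0$ combined with $\phi_i>\phi_{i+1}$ forces $n=1$ (i.e.\ semistability of $k(x)$ on a curve) is a correct and pleasant sanity check.
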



\begin{thebibliography}{99}
\bibitem[BasBelOkRi]{bastianelli2020indecomposability}
Francesco Bastianelli, Pieter Belmans, Shinnosuke Okawa, and Andrea T. Ricolfi.
\newblock Indecomposability of derived categories in families.
\newblock arXiv:2007.00994, 2020.

\bibitem[BelGalMu]{belmans2020graph}
Pieter Belmans, Sergey Galkin, and Swarnava Mukhopadhyay.
\newblock Semiorthogonal decompositions for moduli of sheaves on curves, Oberwolfach Report No. 24/2018, 9–11, DOI:10.4171/OWR/2018/24.

\bibitem[BiGoLee]{biswas2020semiorthogonal}
Indranil Biswas, Tom\'as G\'omez, and Kyoung-Seog Lee.
\newblock Semi-orthogonal decomposition of symmetric products of curves and canonical system.
\newblock Revista Matemática Iberoamericana , Volume 37, pp 1885-1896; doi:10.4171/rmi/1251.

\bibitem[BK]{belmans2019derived}
Pieter Belmans and Andreas Krug.
\newblock Derived categories of (nested) Hilbert schemes.
\newblock arXiv:1909.04321, 2019

\bibitem[CC81]{CC81}
F. Catanese, “On a class of surfaces of general type”, pp. 269–284 in Algebraic
surfaces (Liguori, Naples, 1981), 1981.
\bibitem[CC91]{CC}
F. Catanese and C. Ciliberto, “Surfaces with pg = q = 1”, pp. 49–79 in Problems in the theory of surfaces and their classification (Cortona, 1988), Sympos. Math. 32, Academic Press, London, 1991.
\bibitem[Fr98]{Fred}
R. Friedman, Algebraic Surfaces and Holomorphic Vector Bundles. Springer-Verlag New York, Inc. 1998.

\bibitem[FuLiZhao]{fu2021stability}
Lie Fu, Chunyi Li, and Xiaolei Zhao.
\newblock Stability manifolds of varieties with finite Albanese morphisms
\newblock arXiv:2103.07728, 2021.

\bibitem[GoLee]{gomez2020motivic}
Tomás Gómez and Kyoung-Seog Lee. Motivic decompositions of moduli spaces of vector bundles on curves, arxiv:2007.06067.

\bibitem[Huy]{Hbook}
Daniel Huybrechtz.
\newblock Fourier-Mukai transforms in algebraic geometry, Oxford : Clarendon, 2009.

\bibitem[JL19]{jiang2019derived}
Qingyuan Jiang and Naichung Conan Leung.
\newblock Derived category of projectivization and flops.
\newblock arXiv:1811.12525, 2019.

\bibitem[KaOk18]{kawatani2018nonexistence}
Kotaro Kawatani and Shinnosuke Okawa.
\newblock Nonexistence of semiorthogonal decompositions and sections of the canonical bundle,
\newblock arXiv:1508.00682.

\bibitem[Kaw17]{kawamata2017birational}
Yujiro Kawamata.
\newblock Birational geometry and derived categories.
\newblock arXiv:1710.07370, 2017.

\bibitem[Kuz15]{kuznetsov2015semiorthogonal}
Alexander Kuznetsov.
Semiorthogonal decompositions in algebraic geometry, Proceedings of the Inter- national Congress of Mathematicians—Seoul 2014. Vol. II, Kyung Moon Sa, Seoul, 2014, pp. 635–660. MR 3728631.

\bibitem[Lin21]{lin2021noncommutative}
Xun Lin.
\newblock Noncommutative Hodge conjecture.
\newblock arXiv:2102.03481, 2021.

\bibitem[Mac62]{Mac}
\newblock I.G. MacDonald, Symmetric products of an algebraic curve, Topology, Volume 1, Issue 4, 1962, Pages 319-343, ISSN 0040-9383, https://doi.org/10.1016/0040-9383(62)90019-8. (https://www.sciencedirect.com/science/article/pii/0040938362900198)

\bibitem[Orl93]{Orlov}
D. Orlov, Projective bundles, monoidal transformations, and derived categories of coherent sheaves, Izv. Akad. Nauk SSSR Ser. Mat., 56 (1992): 852–862; English transl., Russian Acad. Sci. Izv. Math., 41 (1993): 133–141.

\bibitem[Per20]{perry2020integral}
Alexander Perry.
\newblock The integral Hodge conjecture for two-dimensional Calabi-Yau
  categories.
\newblock arXiv:2004.03163, 2020.

\bibitem[Pir21]{pirozhkov2021stably}
Dmitrii Pirozhkov.
\newblock Stably semiorthogonally indecomposable varieties.
\newblock arXiv:2011.12743.
\bibitem[Pol07]{Polishchuk}
A. Polishchuk.
\newblock Constant families of t-structures on derived categories of coherent sheaves. Mosc. Math. J., 7(1):109–134, 167, 2007.

\bibitem[Sp21]{spence2021note}
Dylan Spence.
\newblock A note on semiorthogonal indecomposability for some Cohen-Macaulay varieties. arXiv:2104.13331.

\bibitem[Tab19]{tabuada2019noncommutative}
Goncalo Tabuada.
\newblock Noncommutative counterparts of celebrated conjectures.
\newblock arXiv:1812.08774, 2019.

\bibitem[Tod19]{toda2019semiorthogonal}
Yukinobu Toda: Semiorthogonal decompositions of stable pair moduli spaces via d-critical flips. J. Eur. Math. Soc. 23 (2021), 1675-1725. doi: 10.4171/JEMS/1041.

\end{thebibliography}
\end{document}